\title{Logistic growth in seasonally changing environments}
\author{Daniel Daners}%
\author{Zeaiter Zeaiter}%
\affil{School of Mathematics and Statistics, University of Sydney,
  NSW 2006, Australia\authorcr%
  \nolinkurl{daniel.daners@sydney.edu.au}, \nolinkurl{zeaiter.zeaiter@sydney.edu.au}}%
\date{\today}
\numberwithin{equation}{section}
\numberwithin{figure}{section}
\theoremstyle{plain}
\newtheorem{theorem}{Theorem}[section]
\newtheorem{lemma}[theorem]{Lemma}
\newtheorem{proposition}[theorem]{Proposition}
\newtheorem{corollary}[theorem]{Corollary}
\theoremstyle{definition}
\newtheorem{definition}[theorem]{Definition}
\newtheorem{assumption}[theorem]{Assumption}
\theoremstyle{remark}
\newtheorem{remark}[theorem]{Remark}
\DeclareMathOperator{\dist}{dist}
\DeclareMathOperator{\interior}{int}
\DeclareMathOperator{\supp}{supp}
\DeclareMathOperator{\spr}{r}
\DeclareMathOperator{\repart}{Re}
\DeclareMathOperator{\aaa}{\mathfrak{a}}
\let\oldthebibliography\thebibliography
\renewcommand\thebibliography[1]{%
  \oldthebibliography{#1}
  \setlength{\parskip}{.2ex}
  \setlength{\itemsep}{0pt plus 0.3ex}
  \small
}
\begin{document}
\maketitle
\renewcommand{\thefootnote}{}%
\footnotetext{\textbf{Mathematics Subject Classification (2010):} 35K20, 35B09, 35K37}%
\footnotetext{\footnotesize\textbf{Keywords:} Periodic-parabolic equation, logistic growth, periodic solution}%

\begin{abstract}
  We consider a parameter dependent periodic-logistic problem with a logistic term involving a degeneracy that replicates time dependent refuges in the habitat of a population. Working under no or very minimal assumptions on the boundary regularity of the domain we show the existence of a time-periodic solution which bifurcates with respect to the parameter and show their stability. We show that under suitable assumptions that the periodic solution blows up on part of the domain and remains finite on other parts when the parameter approaches a critical value.
\end{abstract}

\section{Introduction}
\label{sec:introduction}
Following the poineering work by Hess \cite{hess:91:ppb} and in particular Du and Peng \cite{du:12:ple,du:13:ssp} we consider the existence of non-trivial positive solutions of the periodic-parabolic logistic equation
\begin{equation}
  \label{eq:pp-logistic}
  \begin{aligned}
    \frac{\partial u}{\partial t}+\mathcal A(t)u & =\mu u-b(x,t)g(x,t,u)u &  & \text{in $\Omega\times(0,T)$,} \\ \mathcal B(t)u & =0 &  & \text{on $\partial\Omega\times[0,T]$,} \\ u(\cdot\,,0) & =u(\cdot\,,T) &  & \text{in $\Omega$.}
  \end{aligned}
\end{equation}
The bounded domain $\Omega\subseteq\mathbb R^N$ represents the habitat of some species with population density $u(x,t)$ at location $x\in\Omega$ and time $t\in\mathbb R$. The time $T$ is the length of a seasonal cycle and referred to as the period. The operator $\mathcal A(t)$ is a second order uniformly elliptic operator that includes intrinsic diffusion, convection and reproduction rates, depending $T$-periodically on $t\in\mathbb R$. The operator $\mathcal B(t)$ determines what happens when the species approaches the boundary $\partial\Omega$ of the habitat and is also a $T$-periodic function of $t\in\mathbb R$. The boundary operator is of Dirichlet, Neumann or Robin type, modelling hostile, no-flux or partly permeable boundaries of the habitat. We also admit mixed boundary conditions with different types of boundary conditions on different parts of $\partial\Omega$. The parameter $\mu\in\mathbb R$ adjusts the reproduction rate. The non-linear term $b(x,t)g(x,t,u)$ is a rate that limits the growth depending on the size of the population. We assume that the growth rate is reduced as the population density increases, which means that the function $u\mapsto g(x,t,u)$ is strictly increasing and that $b(x,t)\geq 0$. The size of $b\in L^\infty(\Omega\times[0,T])$ is a weight that determines how favourable the conditions in the habitat are at any place and time $(x,t)$. Ideal living conditions are given in the region where $b(x,t)=0$, where unrestricted growth is possible. One of the main features in this paper is to deal with the natural assumption that such places change throughout a seasonal cycle. Large values of $b(x,t)$ model a hostile environment.

As demonstrated in \cite{hess:91:ppb}, many properties of \eqref{eq:pp-logistic} are determined by the spectrum of the periodic-parabolic eigenvalue problem
\begin{equation}
  \label{eq:pp-evp}
  \begin{aligned}
    \frac{\partial u}{\partial t}+\mathcal A(t)u+\gamma b(x,t)u & =\mu u &  & \text{in $\Omega\times(0,T)$,} \\ \mathcal B(t)u & =0 &  & \text{on $\partial\Omega\times[0,T]$,} \\ u(\cdot\,,0) & =u(\cdot\,,T) &  & \text{in $\Omega$,}
  \end{aligned}
\end{equation}
where $\gamma\geq 0$ is a parameter. We call an eigenvalue $\mu_1$ a \emph{principal eigenvalue} of \eqref{eq:pp-evp} if it has a positive eigenfunction. A positive eigenfunction of norm one is called a \emph{principal eigenfunction}. It turns out that there is exactly one principal eigenvalue and eigenfunction. We will consider the principal eigenvalue as a function of the zero order term $\gamma b$ and write $\mu_1=\mu_1(\gamma b)$. As shown in \cite[Section~4]{daners:15:ppe} such a periodic-parabolic eigenvalues and eigenfunctions exist. The principal eigenvalue turns out to be strictly increasing and hence
\begin{equation}
  \label{eq:pe-limit}
  \mu^*(b):=\lim_{\gamma\to\infty}\mu_1(\gamma b)\in(-\infty,\infty]
\end{equation}
exists. Under precise assumptions to be specified in Section~\ref{sec:assumptions} and Section~\ref{sec:logistic} we will prove the following result.
\begin{theorem}
  \label{thm:main}
  The equation~\eqref{eq:pp-logistic} has a non-trivial positive weak solution if and only if $\mu_1(0)<\mu<\mu^*(b)$. In that case this solution is unique and linearly stable.
\end{theorem}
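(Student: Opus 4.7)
The common thread is the identity $\mu=\mu_1(V)$ with $V:=bg(\cdot\,,\cdot\,,u)$, valid for any positive $T$-periodic weak solution $u$ of \eqref{eq:pp-logistic}: such a $u$ is also a positive eigenfunction of \eqref{eq:pp-evp} with zero-order coefficient $V$ at eigenvalue $\mu$, and uniqueness of the principal eigenvalue forces $\mu=\mu_1(V)$. Strict monotonicity of $\mu_1$ in the zero-order coefficient, established in \cite{daners:15:ppe}, is the engine behind every step.

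\textbf{Necessity.} Any positive weak solution is bounded by standard parabolic $L^\infty$-regularity, so $0\leq V\leq Cb$ for some $C=C(\|u\|_\infty)$. Since $V$ is strictly positive on a positive-measure subset of $\{b>0\}$ and $Cb$ is a finite multiple of $b$, monotonicity of $\mu_1$ yields
\[
\mu_1(0)<\mu_1(V)=\mu\leq\mu_1(Cb)<\mu^*(b),
\]
the last inequality because $\mu^*(b)=\lim_{\gamma\to\infty}\mu_1(\gamma b)$ is strictly larger than any single $\mu_1(\gamma b)$.

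\textbf{Existence.} For $\mu\in(\mu_1(0),\mu^*(b))$ I would construct a positive sub- and supersolution and run the standard monotone iteration for the period map. A subsolution is $\varepsilon\varphi_0$, with $\varphi_0$ the principal eigenfunction of \eqref{eq:pp-evp} at $\gamma=0$ and $\varepsilon>0$ so small that $bg(\cdot\,,\cdot\,,\varepsilon\varphi_0)\leq\mu-\mu_1(0)$. A supersolution is $M\varphi_\gamma$, where $\gamma$ is fixed so that $\mu<\mu_1(\gamma b)$ (possible since $\mu<\mu^*(b)$) and $M$ is chosen large enough that $bg(\cdot\,,\cdot\,,M\varphi_\gamma)\geq\gamma b$; this relies on the strict growth of $g$ and on a positive lower bound for $\varphi_\gamma$ on $\{b>0\}$ furnished by the strong maximum principle.

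\textbf{Uniqueness and stability.} Uniqueness comes from the standard sliding argument for logistic nonlinearities: for two positive solutions $u_1,u_2$, set $\lambda_*:=\sup\{\lambda>0\colon\lambda u_2\leq u_1\}$; the equation satisfied by $u_1-\lambda_* u_2$, together with strict monotonicity of $g$ and the parabolic comparison principle, forces $\lambda_*\geq 1$, and symmetry gives $u_1=u_2$. For stability, the linearization at $u$ has zero-order coefficient $W:=b\bigl(g(\cdot\,,\cdot\,,u)+u\,\partial_u g(\cdot\,,\cdot\,,u)\bigr)$, which strictly dominates $V$ on $\{b>0,\,u>0\}$; strict monotonicity of $\mu_1$ gives $\mu_1(W)>\mu_1(V)=\mu$, so the linearized periodic-parabolic operator has a strictly positive principal eigenvalue. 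I expect the main technical obstacle to be running this machinery in the weak functional framework announced in the abstract: the supersolution step demands quantitative positivity of $\varphi_\gamma$ on $\{b>0\}$ uniform enough to beat the nonlinear sink, and both the comparison principle and the strong maximum principle underlying uniqueness and stability must be available under no or minimal regularity of $\partial\Omega$.
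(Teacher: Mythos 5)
Your outline reproduces the paper's overall architecture (necessity via $\mu=\mu_1(bg(u))$ and monotonicity, existence via sub/super-solutions and monotone iteration, stability via strict monotonicity of $\mu_1$), and the necessity and stability steps match the paper essentially verbatim. But your existence step has two genuine gaps that the paper spends most of its effort addressing, and you only half-recognize them.

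First, your super-solution $M\varphi_\gamma$ cannot be constructed as stated. You ask for a uniform positive lower bound for $\varphi_\gamma$ on $\{b>0\}$, but this set may run up against $\partial\Omega$, where $\varphi_\gamma$ typically vanishes (Dirichlet part) and nothing in the weak framework gives any quantitative lower bound. The paper fixes this in Proposition~\ref{prop:existence-supersolution} by replacing $b$ with $b_\delta:=1_{\Omega_\delta}b$, which is compactly supported in $\Omega$: then continuity and strict positivity of the eigenfunction on the compact set $\Omega_\delta\times[0,T]$ give the needed lower bound, and the convergence $\mu_1(\gamma b_\delta)\to\mu_1(\gamma b)$ as $\delta\to 0^+$ (Corollary~\ref{cor:spectral-radius-convergence}, Remark~\ref{rem:cutoff-convergence}) keeps the principal eigenvalue above $\mu$. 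You flag the positivity issue as a ``technical obstacle'' but offer no resolution; the truncation is not cosmetic, it is the resolution.

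Second, and more seriously, you say nothing about why the sub-solution lies below the super-solution. Without $\varepsilon\varphi_0\leq M\varphi_\gamma$ the monotone iteration cannot start. In the classical smooth setting one gets this from $C^1$-regularity up to $\partial\Omega$ and Hopf's lemma; the paper explicitly points out that this is unavailable here and devotes a whole section to the eigenfunction comparison Theorem~\ref{thm:eigenfunction-comparison}, which is precisely the replacement. That theorem is the technical heart of the existence proof under minimal boundary regularity, and its absence is a real gap, not a routine detail. Your uniqueness argument is also a different (sliding) route from the paper's Proposition~\ref{prop:pp-uniqueness}; the paper instead takes $w=v-u$, notes it would be a $T$-periodic eigenfunction of a linear problem with a strictly larger zero-order coefficient than $bg(v)$, and derives a contradiction from strict monotonicity of $\mu_1$. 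Your sliding argument runs into the same boundary-regularity problem: showing $\lambda_*>0$ and then improving $\lambda_*<1$ typically needs Hopf's lemma or quantitative boundary behaviour, exactly what the paper is structured to avoid. The paper's argument is cleaner in this setting and you should adopt it.
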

A version of the above theorem with $\mu^*(b)<\infty$ appears for the first time in Du and Peng \cite{du:12:ple}, where $b$ had very special spacial and temporal degeneracies, and the linear part of the equation was autonomous. We remove all these restrictions.

As a corollary we recover and generalise a result due to Hess \cite[Section~28]{hess:91:ppb}, but allow $b$ to have degeneracies. It is about the existence and uniqueness of a non-trivial positive solution of
\begin{equation}
  \label{eq:ppp-logistic}
  \begin{aligned}
    \frac{\partial u}{\partial t}+\mathcal A(t)u & =m u-b(x,t)g(x,t,u)u &  & \text{in $\Omega\times(0,T)$,} \\ \mathcal B(t)u & =0 &  & \text{on $\partial\Omega\times[0,T]$,} \\ u(\cdot\,,0) & =u(\cdot\,,T) &  & \text{in $\Omega$,}
  \end{aligned}
\end{equation}
where $m\in L^\infty(\Omega\times(0,T))$ possibly changes sign. It is an immediate consequence of \eqref{eq:pp-logistic} with $\mathcal A(t)$ replaced by $\mathcal A(t)-m(t)$ and $\mu=0$. One can replace $m$ by $\lambda m$ with $\lambda\in\mathbb R$ a parameter, and recover some of the results in \cite{aleja:21:wpp,aleja:23:cep}, but under much weaker assumptions on the regularity of the coefficients and the domains.
\begin{corollary}
  \label{cor:main}
  Under the above assumptions, the problem \eqref{eq:ppp-logistic} has a non-trivial solution if and only if $\mu_1(-m)<0<\mu^*(m,b)$, where
  \begin{equation}
    \label{eq:pep-limit}
    \mu^*(m,b):=\lim_{\gamma\to\infty}\mu_1(\gamma b-m).
  \end{equation}
  In that case this solution is unique and linearly stable.
\end{corollary}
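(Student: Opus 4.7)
The plan is to reduce Corollary~\ref{cor:main} to Theorem~\ref{thm:main} by absorbing the sign-changing coefficient $m$ into the linear part of the equation. Concretely, I define the modified operator $\widetilde{\mathcal A}(t) := \mathcal A(t)-m(\cdot\,,t)$, interpreting $m$ as a zero-order multiplication operator. Since $m\in L^\infty(\Omega\times(0,T))$ is bounded, adding $-m$ to $\mathcal A(t)$ does not alter uniform ellipticity and only perturbs the (bounded) zero-order coefficient, so $\widetilde{\mathcal A}(t)$ satisfies exactly the same hypotheses on its coefficients that were imposed on $\mathcal A(t)$ in Section~\ref{sec:assumptions} (to be spelled out). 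The boundary operator $\mathcal B(t)$, the weight $b$ and the non-linearity $g$ are unchanged.

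With this substitution, \eqref{eq:ppp-logistic} becomes precisely \eqref{eq:pp-logistic} for $\widetilde{\mathcal A}$ with parameter $\mu=0$. Theorem~\ref{thm:main} therefore applies and yields a non-trivial positive weak solution, unique and linearly stable, if and only if $\widetilde\mu_1(0)<0<\widetilde\mu^*(b)$, where $\widetilde\mu_1(\gamma b)$ denotes the principal eigenvalue of \eqref{eq:pp-evp} with $\mathcal A$ replaced by $\widetilde{\mathcal A}$. It remains to translate these eigenvalue conditions back to the $\mu_1$ and $\mu^*(m,b)$ in the statement.

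This translation is immediate: the periodic-parabolic eigenvalue problem for $\widetilde{\mathcal A}$ with zero-order potential $\gamma b$ reads
\begin{equation*}
\frac{\partial u}{\partial t}+\mathcal A(t)u+\bigl(\gamma b(x,t)-m(x,t)\bigr)u=\mu u,
\end{equation*}
which is exactly the eigenvalue problem defining $\mu_1(\gamma b-m)$. Hence $\widetilde\mu_1(\gamma b)=\mu_1(\gamma b-m)$ for every $\gamma\geq 0$. Setting $\gamma=0$ gives $\widetilde\mu_1(0)=\mu_1(-m)$, and passing to the limit $\gamma\to\infty$ gives $\widetilde\mu^*(b)=\mu^*(m,b)$ by the very definition \eqref{eq:pep-limit}. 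Plugging these identities into the criterion $\widetilde\mu_1(0)<0<\widetilde\mu^*(b)$ obtained from Theorem~\ref{thm:main} produces the condition $\mu_1(-m)<0<\mu^*(m,b)$ of the corollary.

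The only point needing a moment of care is the verification that the eigenvalue framework of \cite{daners:15:ppe} used in Section~\ref{sec:logistic} accommodates a potential of the form $\gamma b-m$ (not merely a nonnegative multiple of $b$), so that both $\widetilde\mu_1$ and its limit as $\gamma\to\infty$ are well defined and the monotonicity-in-$\gamma$ argument underlying \eqref{eq:pe-limit} goes through. Since the theory is developed for general bounded zero-order terms and $-m$ is a fixed $L^\infty$ shift independent of $\gamma$, this is routine; strict monotonicity in $\gamma$ is preserved because it depends only on the nonnegative increment $b$. No other step in Theorem~\ref{thm:main} is affected, so I expect no real obstacle beyond bookkeeping.
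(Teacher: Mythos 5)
Your argument is exactly the paper's: the authors state that Corollary~\ref{cor:main} is an immediate consequence of \eqref{eq:pp-logistic} with $\mathcal A(t)$ replaced by $\mathcal A(t)-m(t)$ and $\mu=0$, and your identification $\widetilde\mu_1(\gamma b)=\mu_1(\gamma b-m)$ is the correct bookkeeping step that translates the criterion of Theorem~\ref{thm:main} into $\mu_1(-m)<0<\mu^*(m,b)$. The proposal is correct and essentially identical to the paper's proof.
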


Part of this material is contained in the PhD Thesis \cite{zeaiter:24:psg} by the second author. Related results for linear equations or systems also appear in \cite{alvarez-caudevilla:20:all,alvarez:14:qac}.

\section{Precise assumptions}
\label{sec:assumptions}
We will be working with non-autonomous boundary value problems in divergence form. We assume that $\Omega\subseteq\mathbb R^N$ is a bounded domain, that $T>0$ is fixed and that $\mathcal A(t)$ has the form
\begin{equation}
  \label{eq:A}
  \mathcal A(t)u:=-\sum_{k=1}^N\frac{\partial}{\partial x_k}\Bigl( \sum_{j=1}^N a_{jk}\frac{\partial u}{\partial x_j}+a_ju\Bigr) +\sum_{k=1}^nb_k\frac{\partial u}{\partial x_k}+c_0u,
\end{equation}
where $a_{jk},a_j,b_k,c_0\in L^\infty(\Omega\times \mathbb R)$ are real valued and $T$-periodic in $t\in\mathbb R$. We also assume that $\mathcal A(t)$ is uniformly strongly elliptic, that is, there exists $\alpha>0$ such that
\begin{equation}
  \label{eq:Aelliptic}
  \alpha |\xi|^2\leq\sum_{k=1}^N\sum_{j=1}^N a_{jk}(x,t)\xi_j\xi_k
\end{equation}
for all $\xi\in\mathbb R^N$ and almost all $(x,t)\in\Omega\times\mathbb R$. We assume that $\partial\Omega=\Gamma_0\cup\Gamma_1$ is a disjoint union with $\Gamma_0$. We consider the boundary operator
\begin{equation}
  \label{eq:B}
  \mathcal B(t)u:=
  \begin{cases}
    u_{\Gamma_0}
                                                                                                   & \text{on $\Gamma_0$ (Dirichlet b.c.)}      \\
    \sum_{k=1}^N\Bigl(\sum_{j=1}^N a_{jk}\frac{\partial u}{\partial x_j}+a_ju\Bigr)\nu_k+\beta_0 u & \text{on $\Gamma_1$ (Neumann/Robin b.c.).}
  \end{cases}
\end{equation}
Here $\nu=(\nu_1,\dots,\nu_N)$ is the outer unit normal on $\Gamma_1$, and $\beta_0\in L^\infty(\Gamma_1\times\mathbb R)$ $T$-periodic in $t\in\mathbb R$. Note that we do not assume that the $\Gamma_k$ are open and closed in $\partial\Omega$ and we generally make no restrictions on the sign of $\beta_0$. For simplicity we assume that a neighbourhood of $\Gamma_1$ in $\partial\Omega$ is Lipschitz. In that case we can in particular assume without loss of generality that $\beta_0\geq 0$. According to \cite[Section~3]{daners:09:ipg} this can be achieved by rewriting the pair of operators $(\mathcal A(t),\mathcal B(t))$ in an equivalent form. Hence in what follows we always assume without loss of generality that $\beta_0\geq 0$.

The boundary conditions \eqref{eq:B} as well as the elliptic operator \eqref{eq:A} are to be interpreted in a weak form. To do so we introduce the bilinear form associated with $(\mathcal A(t),\mathcal B(t))$. We define
\begin{multline}
  \label{eq:a0-form}
  \aaa_0(t,u,v):=\int_\Omega\sum_{k=1}^N\Bigl(\sum_{j=1}^Na_{jk}(x,t)\frac{\partial u}{\partial x_j}+a_j(x,t)\Bigr)\frac{\partial v}{\partial x_k}\,dx\\ +\int_\Omega\Bigl(\sum_{j=1}^N b_k(x,t)\frac{\partial u}{\partial x_k}+c_0(x,t)u\Bigr)v\,dx.
\end{multline}
for all $u,v\in H^1(\Omega)$. To incorporate the boundary conditions we introduce the space
\begin{equation}
  \label{eq:V_B}
  H_{\mathcal B}^1(\Omega):=\overline{\bigl\{u\in H^1(\Omega)\colon\text{$u=0$ in a neighbourhood of $\Gamma_0$}\bigr\}},
\end{equation}
where the closure is taken in the Sobolev space $H^1(\Omega)$. We define
\begin{equation}
  \label{eq:a-form}
  \aaa(t,u,v):=\aaa_0(t,u,v)+\int_{\Gamma_1}\beta_0(x,t)uv\,d\sigma,
\end{equation}
for all $u,v\in H_{\mathcal B}^1(\Omega)$, where the integral over $\Gamma_1$ is taken with respect to the ($N-1$)-dimensional Hausdorff measure restricted to $\Gamma_1$. The following proposition collects some standard properties of the form $\aaa$, see for instance \cite[XVIII Section 4.4]{dautray:92:man5}. We will denote by $u^{+}:=\max\left\{u,0\right\}$ and $u^{-}:=\max\left\{-u,0\right\}$ the positive and negative parts of $u\in V$.

\begin{proposition}
  \label{prop:form-properties}
  Let the above assumptions be satisfied and set $V:=H_{\mathcal B}^1(\Omega)$ and $H:=L^{2}(\Omega)$. Then the following assertions are true:
  \begin{enumerate}[\upshape (i)]
  \item $[0,T]\to\mathbb R$, $t\mapsto \aaa(t,u,v)$ is measurable for all $u,v\in V$.
  \item $V\times V\to\mathbb R$, $(u,v)\mapsto \aaa(t,u,v)$ is bilinear and there exist $M\geq 0$ such that
    \begin{equation}
      \label{eq:a-bounded}
      |\aaa(t,u,v)|\leq M\|u\|_V\|v\|_V
    \end{equation}
    for all $t\in[0,T]$ and all $v\in V$.
  \item There exists $\omega_0\in\mathbb R$ such that we can choose $\alpha>0$ so that,
    \begin{equation}
      \label{eq:a-coercive}
      \frac{\alpha}{2}\|u\|_V^2
      \leq \aaa(t,u,u)+\omega \|u\|_H^2
    \end{equation}
    for all $\omega\geq\omega_0$ and all $u\in V$.
  \item $\aaa(t,u^+,u^-)=0$ for all $t\in[0,T]$ and all $u\in V$.
  \end{enumerate}
\end{proposition}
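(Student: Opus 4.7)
The plan is to verify (i)--(iv) in turn; each is standard but requires a little care because the coefficients are only in $L^\infty$ and $\Gamma_1$ is only assumed locally Lipschitz. For (i), I would fix $u,v\in V$ and note that each integrand in $\aaa_0(t,u,v)$ is a product of one of the $L^\infty(\Omega\times\mathbb R)$ coefficients $a_{jk},a_j,b_k,c_0$ with a function of $x$ alone (namely $\partial_j u\,\partial_k v$, $u\,\partial_k v$, $(\partial_k u)v$ or $uv$) which lies in $L^1(\Omega)$. A direct application of Fubini's theorem then yields measurability of $t\mapsto \aaa_0(t,u,v)$. For the surface term the trace of $uv$ lies in $L^1(\Gamma_1)$ under the local Lipschitz assumption, and the same argument applies.

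For (ii), bilinearity is immediate from the linearity of the integrals. Boundedness of $\aaa_0$ follows by pulling out the $L^\infty$ norms of the coefficients and applying Cauchy--Schwarz term by term, giving an estimate of the form $C\|u\|_V\|v\|_V$ with $C$ independent of $t$. For the surface integral one combines $\beta_0\in L^\infty(\Gamma_1\times\mathbb R)$ with the continuity of the trace operator $V\to L^2(\Gamma_1)$, which holds because a neighbourhood of $\Gamma_1$ in $\partial\Omega$ is Lipschitz.

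For (iii), I would decompose $\aaa(t,u,u)$ into a principal part $\int_\Omega\sum a_{jk}\partial_j u\,\partial_k u\,dx$, first-order parts of the form $\int_\Omega f(x,t)u\,\partial_k u\,dx$ with $f\in L^\infty$, a zero-order part $\int_\Omega c_0 u^2\,dx$, and the surface term. Ellipticity~\eqref{eq:Aelliptic} bounds the principal part below by $\alpha\|\nabla u\|_H^2$. Each first-order term is controlled via Young's inequality by $\varepsilon\|\nabla u\|_H^2+C_\varepsilon\|u\|_H^2$ with $\varepsilon>0$ arbitrary; choosing $\varepsilon$ small enough absorbs these into half of the principal part. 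The zero-order term is bounded below by $-\|c_0\|_\infty\|u\|_H^2$, while the surface term is non-negative because $\beta_0\geq 0$. Choosing $\omega_0$ large enough to dominate the resulting multiple of $\|u\|_H^2$ yields~\eqref{eq:a-coercive} (with a possibly smaller ellipticity constant $\alpha$).

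For (iv), I would use the standard Stampacchia facts that $u^\pm\in V$ with $\nabla u^+=\chi_{\{u>0\}}\nabla u$ and $\nabla u^-=-\chi_{\{u<0\}}\nabla u$, so that $u^+u^-=0$, $u^\pm\,\partial_k u^\mp=0$ and $\partial_j u^+\,\partial_k u^-=0$ almost everywhere in $\Omega$. Every integrand in $\aaa_0(t,u^+,u^-)$ therefore vanishes pointwise, and on $\Gamma_1$ the traces of $u^+$ and $u^-$ again have disjoint supports, so the surface integral vanishes as well. The only mildly delicate point in the whole argument is verifying that the trace theorem and the measurability of the surface integral go through under the minimal regularity assumption on $\Gamma_1$; the rest is routine.
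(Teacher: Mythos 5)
Your argument is correct, and it is exactly the standard verification the paper is implicitly invoking when it writes the proposition ``collects some standard properties of the form $\aaa$'' and simply cites Dautray--Lions; the paper gives no proof of its own. All four parts are handled in the expected way: measurability in $t$ via joint measurability of the $L^\infty(\Omega\times\mathbb R)$ coefficients and Fubini, boundedness via Cauchy--Schwarz plus the $H^1$-to-$L^2(\Gamma_1)$ trace estimate on the Lipschitz part of the boundary, G\r{a}rding coercivity via ellipticity, Young's inequality and the sign condition $\beta_0\geq 0$, and part (iv) via Stampacchia's identities $\nabla u^\pm = \pm\chi_{\{\pm u>0\}}\nabla u$ together with the fact that traces commute with taking positive and negative parts, so that every integrand, including the boundary one, vanishes pointwise. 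Your closing remark that the coercivity constant in \eqref{eq:a-coercive} is a possibly smaller multiple of the ellipticity constant from \eqref{eq:Aelliptic} is exactly the right reading of the somewhat loosely worded phrase ``we can choose $\alpha>0$''.
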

It follows from the above proposition that there exist operators $A(t)\in\mathcal L(V,V')$, $t\in[0,T]$, such that $\left\langle A(t)u,v\right\rangle=\aaa(t;u,v)$ for all $u,v\in V$ and that
\begin{equation}
  \label{eq:A-bounded}
  \|A(t)\|_{\mathcal L(V,V')}\leq M
\end{equation}
for all $t\in [0,T]$. Hence, given $s\in[0,T)$, $u_0\in H$ and $f\in L^2((s,T),V')$ it makes sense to consider the linear initial value problem
\begin{equation}
  \label{eq:linear-abstract-equation}
  \begin{aligned}
    \dot{u}+A(t)u & =f(t) &  & t\in(s,T], \\
    u(s)          & =u_0.                 \\
  \end{aligned}
\end{equation}
We call $u$ a \emph{solution} of \eqref{eq:linear-abstract-equation} if
\begin{equation}
  \label{eq:W}
  u\in W(s,T;V,V'):=\bigl\{u\in L^2((s,T),V)\colon \dot u\in L^2((s,T),V')\bigr\}
\end{equation}
and it satisfies \eqref{eq:linear-abstract-equation}. The space $W(s,T;V,V')$ is a Hilbert space with norm
\begin{equation*}
  \|u\|_W
  :=\Bigl(\int_s^T\|u(t)\|_V^2\,dt+\int_s^T\|\dot u(t)\|_{V'}^2\,dt\Bigr)^{1/2}.
\end{equation*}
It is well known that
\begin{equation}
  \label{eq:W-to-C}
  W(s,T;V,V')\hookrightarrow C([s,T],H),
\end{equation}
see for instance \cite[XVIII Section 1.2]{dautray:92:man5}. Hence, the initial condition $u(s)=v\in H$ makes sense and $u\in C([s,T],H)$. Moreover, as $V\hookrightarrow H$ is compact, the embedding
\begin{equation}
  \label{eq:W-to-L-compact}
  W(s,T;V,V')\hookrightarrow L^2([s,T],H),
\end{equation}
is compact as well, see \cite[Theorem~1.5.1]{lions:69:qmr}. There are several ways to characterise solutions to \eqref{eq:linear-abstract-equation}. In particular $u\in L^2((s,T),V)$ is a solution of \eqref{eq:linear-abstract-equation} if and only if
\begin{equation}
  \label{eq:linear-weak-solution}
  \begin{aligned}
    -\langle u(s),\dot v(s)\rangle
     & +\int_s^T\langle u(\tau),\dot v(\tau)\rangle\,d\tau
    +\int_s^T\aaa(\tau,u(\tau),v(\tau))\,d\tau             \\
     & =\int_s^T\langle f(\tau),v(\tau)\rangle\,d\tau
  \end{aligned}
\end{equation}
for all $v\in W(s,T;V,V')$ with $v(T)=0$ or a dense subset thereof, see for instance \cite{dautray:92:man5}. It can be proved either by Galerkin approximation or by Lion's generalisation of the Lax-Milgram theorem that \eqref{eq:linear-abstract-equation} has a unique solution in $W(s,T;V,V')$, see \cite{dautray:92:man5}, \cite[Section~IV.1]{lions:61:edo} or \cite[Section~III.2]{showalter:97:mob}. We will make use of the following a priori estimates for the solutions. The important feature for us is the independence on a positive potential.
\begin{proposition}[A priori estimates]
  \label{prop:a-priori-estimates}
  Suppose that the above assumptions are satisfied and that $m\in L^\infty((0,T),L^\infty(\Omega))$ with $m\geq 0$. Let $s\in[0,T)$, $u_0\in H$ and $f\in L^2((s,T),V')$. Let $u\in W(s,T;V,V')$ be a solution of
  \begin{equation}
    \label{eq:pp-abstract-perturbed}
    \begin{aligned}
      \dot u+A(t)u+m(t)u & =f(t) &  & t\in(s,T] \\
      u(s)               & =u_0  &  &
    \end{aligned}
  \end{equation}
  Then for any $\omega\geq \omega_0$ and $t\in[s,T]$ we have that
  \begin{equation}
    \label{eq:a-priori-estimate}
    \begin{split}
      \alpha\int_s^t\|e^{-\omega(\tau-s)}u(\tau)\|_V^2\,d\tau
       & +\|e^{-\omega(t-s)}u(t)\|_H^2                                                       \\
       & \leq\|u(s)\|_H^2+\frac{1}{\alpha}\int_s^t\|e^{-\omega(\tau-s)}f(\tau)\|_{V'}\,d\tau
    \end{split}
  \end{equation}
  and
  \begin{equation}
    \label{eq:a-priori-estimate-derivative}
    \begin{split}
      \int_s^t\|e^{-\omega(\tau-s)}\dot u(\tau)\|_{V'}^2\,d\tau
       & \leq 2(M+\|m\|_\infty)^2\int_s^t\|e^{-\omega(\tau-s)}u(\tau)\|_{V}^2\,d\tau \\
       & \qquad+2\int_s^t\|e^{-\omega(\tau-s)}f(\tau)\|_{V'}^2\,d\tau
    \end{split}
  \end{equation}
  where $M$ is from \eqref{eq:a-bounded}.
\end{proposition}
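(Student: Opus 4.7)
The plan is to remove the Gårding-type defect in the coercivity \eqref{eq:a-coercive} by an exponential tilt. Define $v(\tau):=e^{-\omega(\tau-s)}u(\tau)$ and $g(\tau):=e^{-\omega(\tau-s)}f(\tau)$ for any fixed $\omega\geq\omega_0$. A direct application of the product rule shows $v\in W(s,T;V,V')$ with $v(s)=u_0$, and \eqref{eq:pp-abstract-perturbed} transforms into
\begin{equation*}
\dot v+A(t)v+\bigl(m(t)+\omega\bigr)v=g,
\end{equation*}
so that the shifted principal part is now truly coercive on $V$.

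For \eqref{eq:a-priori-estimate} I pair this transformed equation with $v$ in the $V$-$V'$ duality. Using the identity $\tfrac{d}{d\tau}\|v\|_H^2=2\langle\dot v,v\rangle$ that holds on $W(s,T;V,V')$ via \eqref{eq:W-to-C}, the coercivity \eqref{eq:a-coercive} to control $\aaa(t,v,v)+\omega\|v\|_H^2$, and the crucial fact that $m\geq0$ forces $\int_\Omega m(\tau)v(\tau)^2\,dx\geq0$ so this term can simply be discarded, I arrive at
\begin{equation*}
\tfrac{1}{2}\tfrac{d}{d\tau}\|v(\tau)\|_H^2+\tfrac{\alpha}{2}\|v(\tau)\|_V^2\leq\|g(\tau)\|_{V'}\|v(\tau)\|_V.
\end{equation*}
A Young inequality $\|g\|_{V'}\|v\|_V\leq\tfrac{\alpha}{4}\|v\|_V^2+\tfrac{1}{\alpha}\|g\|_{V'}^2$ absorbs part of the $V$-norm on the right into the left, and integration from $s$ to $t$ together with $v(s)=u_0$ yields \eqref{eq:a-priori-estimate} (modulo the explicit constants). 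The decisive feature is that $\|m\|_\infty$ never enters this bound---this is precisely the promised independence on a positive potential, bought entirely by the sign assumption $m\geq 0$.

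For \eqref{eq:a-priori-estimate-derivative} I return to the equation and write $\dot u=f-A(t)u-m(t)u$. Bounding in $V'$ using $\|A(t)\|_{\mathcal L(V,V')}\leq M$ from \eqref{eq:A-bounded} and $\|m(\tau)u(\tau)\|_{V'}\leq\|m\|_\infty\|u(\tau)\|_V$ via the Gelfand triple $V\hookrightarrow H\hookrightarrow V'$ gives the pointwise estimate $\|\dot u(\tau)\|_{V'}\leq(M+\|m\|_\infty)\|u(\tau)\|_V+\|f(\tau)\|_{V'}$. Squaring with $(a+b)^2\leq 2a^2+2b^2$, multiplying by $e^{-2\omega(\tau-s)}$, and integrating in $\tau$ from $s$ to $t$ delivers \eqref{eq:a-priori-estimate-derivative} directly. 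The arguments are textbook energy estimates; the only conceptual obstacle is the asymmetric role of $m$ in the two bounds, and this asymmetry is forced by the sign of $m$ together with the direction in which the pairing inequality is applied---$m$ lives on the "favourable" side in the first estimate and can be discarded, but must be bounded above in $V'$ in the second.
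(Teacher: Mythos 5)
Your proposal is correct and follows essentially the same approach as the paper's own proof: the exponential change of variable $w(\tau)=e^{-\omega(\tau-s)}u(\tau)$, the integration-by-parts identity on $W(s,T;V,V')$, the Gårding coercivity \eqref{eq:a-coercive}, discarding the non-negative $m$-term, a Young inequality to absorb the $V$-norm on the right, and then a direct $V'$-estimate of $\dot u$ from the equation for the derivative bound. The only cosmetic differences are your slightly different Young split (giving marginally different constants, which you flag) and writing the first estimate in differential rather than integral form before integrating.
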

\begin{proof}
  First note that $w(t):=e^{-\omega(t-s)}u(t)$ satisfies the equation
  \begin{equation*}
    \begin{aligned}
      \dot w+A(t)w+\omega w+m(t)w & =e^{-\omega(t-s)}f(t) &  & t\in(s,T] \\
      w(s)                        & =u_0.                 &  &
    \end{aligned}
  \end{equation*}
  By the integration by parts formula for functions in $W(s,T;V,V')$ we have that
  \begin{equation}
    \label{eq:integration-by-parts}
    \frac{1}{2}\bigl(\|w(t)\|_H^2-\|w(s)\|_H^2\bigr)
    =\frac{1}{2}\int_s^t\frac{d}{d\tau}\|w(\tau)\|_H^2\,d\tau
    =\int_0^T\langle\dot w(\tau),w(\tau)\rangle\,d\tau,
  \end{equation}
  see for instance \cite[Theorem~XVIII.1.2]{dautray:92:man5}. Hence it follows from \eqref{eq:a-coercive} and the positivity of $m$ that
  \begin{align*}
    \alpha\int_s^t
     & \|w(\tau)\|_V^2\,d\tau+\frac{1}{2}\|w(t)\|_H^2                                                                                                    \\
     & \leq \frac{1}{2}\|w(s)\|^2+\int_s^t\langle\dot w(\tau),w(\tau)\rangle+\aaa(\tau,w(\tau),w(\tau))+\omega\|w(\tau)\|_H^2\,d\tau                     \\
     & =\frac{1}{2}\|w(s)\|_H^2+\int_s^t\langle\dot w(\tau)+A(\tau)w(\tau)+\omega w(\tau),w(\tau)\rangle\,d\tau                                          \\
     & =\frac{1}{2}\|w(s)\|_H^2+\int_s^te^{-\omega(\tau-s)}\langle f(\tau),w(\tau)\rangle\,d\tau-\int_s^t\langle m(\tau)w(\tau), w(\tau)\rangle\,d\tau   \\
     & \leq\frac{1}{2}\|w(s)\|_H^2+\int_s^t\|e^{-\omega(\tau-s)}f(\tau)\|_{V'}\|w(\tau)\|_V\,d\tau                                                       \\
     & \leq\frac{1}{2}\|w(s)\|_H^2+\frac{1}{2\alpha}\int_s^t\|e^{-\omega(\tau-s)}f(\tau)\|_{V'}^2\,d\tau+\frac{\alpha}{2}\int_s^t\|w(\tau)\|_V^2\,d\tau.
  \end{align*}
  It follows that
  \begin{equation*}
    \alpha\int_s^t\|w(\tau)\|_V^2\,d\tau+\|w(t)\|_H^2
    \leq\|w(s)\|_H^2+\frac{1}{\alpha}\int_s^t\|e^{-\omega(\tau-s)}f(\tau)\|_{V'}^2\,d\tau.
  \end{equation*}
  Taking into account that $w(t)=u(t)e^{-\omega(t-s)}$ we obtain \eqref{eq:a-priori-estimate}. Now consider the derivative $\dot u$. If $v\in V$, then by \eqref{eq:pp-abstract-perturbed}
  \begin{align*}
    |\langle\dot u(\tau),v\rangle
     & =\bigl|-\aaa(\tau,u(\tau),v)-\langle m(\tau)u(\tau),v\rangle+\langle f(\tau),v\rangle\bigr| \\
     & \leq \bigl(M\|u(\tau)\|_V+\|m\|_\infty\|u(\tau)\|_V+\|f(\tau)\|_{V'}\bigr)\|v\|_V
  \end{align*}
  Here we cannot omit the term with $m(\tau)$ since it does not necessarily have a positive sign. By the definition of the dual norm we see that
  \begin{equation*}
    \|\dot u(\tau)\|_{V'}\leq M\|u(\tau)\|_V+\|m\|_\infty\|u(\tau)\|_V+\|f(\tau)\|_{V'}.
  \end{equation*}
  It follows that
  \begin{equation*}
    \|e^{-\omega(\tau-s)}\dot u(\tau)\|_{V'}^2
    \leq 2(M+\|m\|_\infty)^2\|e^{-\omega(\tau-s)}u(\tau)\|_{V}^2+2\|e^{-\omega(\tau-s)}f(\tau)\|_{V'}^2
  \end{equation*}
  Now \eqref{eq:a-priori-estimate-derivative} follows by integration over $[s,t]$.
\end{proof}
\begin{remark}
  \label{rem:W-apriori-estimate}
  Combining \eqref{eq:a-priori-estimate} and \eqref{eq:a-priori-estimate-derivative} we can deduce that any solution of \eqref{eq:linear-abstract-equation} satisfies an estimate of the form
  \begin{equation}
    \label{eq:linear-solution-estimate}
    \|u\|_{W(s,T;V,V')}\leq C\Bigl(\|u_0\|_H^2+\int_s^T\|f(t)\|_{V'}^2\,dt\Bigr)^{1/2}
  \end{equation}
  with $C$ independent of $f$, $u_0$ and $s\in[0,T)$, but dependent on $\|m\|_\infty$. In particular, the solution to \eqref{eq:linear-abstract-equation} is unique and continuously depends on $u_0$ and $f$.
\end{remark}
As a consequence of Proposition~\ref{prop:form-properties}(iv) for any positive initial condition, $v\geq 0$, and inhomogeneity, $f\geq 0$, we have that the solution $u$ of \eqref{eq:linear-abstract-equation} satisfies $u(t)\geq 0$ for all $t\in[s,T]$, see for instance \cite[Proposition~3.1]{arendt:14:ics}. The above iis collected in the following abstract existence theorem.
\begin{theorem}
  \label{thm:linear-existence}
  Under the above assumptions the equation \eqref{eq:linear-abstract-equation} has a unique solution $u\in W(s,T;V,V')$ for every $v\in H$ and $f\in L^2((s,T),V')$. That solution satisfies the a priori estimate \eqref{eq:linear-solution-estimate} with constant depending only on $\alpha$, $\omega$, $M$ and $T$. If $f,u_0\geq 0$, then the solution $u\geq 0$.
\end{theorem}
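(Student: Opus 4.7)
For existence I would invoke Lions' generalisation of the Lax--Milgram theorem applied to the form $\aaa$: Proposition~\ref{prop:form-properties} supplies exactly the required hypotheses---measurability of $t\mapsto\aaa(t,u,v)$, bilinearity and boundedness of $\aaa(t,\cdot,\cdot)$ uniformly in $t$, and the coercivity estimate~\eqref{eq:a-coercive}. After the gauge substitution $w(t):=e^{-\omega(t-s)}u(t)$ with $\omega\geq\omega_0$, equation~\eqref{eq:linear-abstract-equation} becomes a problem with a genuinely coercive form, and the abstract theory in \cite{dautray:92:man5}, \cite[Section~IV.1]{lions:61:edo} or \cite[Section~III.2]{showalter:97:mob}---either via the Lions--Lax--Milgram approach or via Galerkin approximation with a weak compactness argument---produces a unique $w\in W(s,T;V,V')$; undoing the substitution, while using \eqref{eq:W-to-C} to interpret the initial condition pointwise, yields $u\in W(s,T;V,V')$ solving~\eqref{eq:linear-abstract-equation}. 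The a priori estimate~\eqref{eq:linear-solution-estimate} is the special case $m\equiv 0$ of Proposition~\ref{prop:a-priori-estimates}: combining \eqref{eq:a-priori-estimate} with \eqref{eq:a-priori-estimate-derivative} and absorbing the exponential factors $e^{\pm\omega T}$ into the constant delivers an $H$-bound on $u$ and an $L^2$-bound on $\dot u$ in terms of the data. Uniqueness then follows at once: with $u_0=0$ and $f\equiv 0$, \eqref{eq:a-priori-estimate} forces $u\equiv 0$.

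For the positivity assertion, assume $u_0\geq 0$ and $f\geq 0$ and test~\eqref{eq:linear-abstract-equation} with $-u^-$. Writing $u=u^+-u^-$ and invoking Proposition~\ref{prop:form-properties}(iv) gives $\aaa(\tau,u(\tau),-u^-(\tau))=\aaa(\tau,u^-(\tau),u^-(\tau))$. Applying the integration-by-parts identity~\eqref{eq:integration-by-parts} to $u^-$, together with $u^-(s)=0$ (which follows from $u_0\geq 0$) and the sign condition $\langle f(\tau),-u^-(\tau)\rangle\leq 0$, yields
\begin{equation*}
  \frac{1}{2}\|u^-(t)\|_H^2+\int_s^t\aaa(\tau,u^-(\tau),u^-(\tau))\,d\tau\leq 0
\end{equation*}
for every $t\in[s,T]$. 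Coercivity~\eqref{eq:a-coercive} then gives $\|u^-(t)\|_H^2\leq 2\omega\int_s^t\|u^-(\tau)\|_H^2\,d\tau$, and Gronwall's inequality forces $u^-\equiv 0$, so $u\geq 0$.

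The only mildly delicate point in the whole argument is that $u^-$ must be known to lie in $W(s,T;V,V')$ for both the test-function calculation and the identity~\eqref{eq:integration-by-parts} to be legitimate. This is standard in our setting: the map $u\mapsto u^-\colon V\to V$ is Lipschitz continuous, and the chain rule in $W(s,T;V,V')$ transports this to the time-dependent setting. The cleanest route, however, is to appeal directly to \cite[Proposition~3.1]{arendt:14:ics}, which is cited in the paragraph preceding the theorem precisely for this purpose.
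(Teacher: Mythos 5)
Your proposal is correct and follows essentially the same route the paper takes: it cites the same abstract sources (Lions/Galerkin) for existence, obtains uniqueness and the estimate from Proposition~\ref{prop:a-priori-estimates} with $m\equiv 0$, and for positivity ultimately points to \cite[Proposition~3.1]{arendt:14:ics}, which is exactly the reference the paper invokes in the paragraph preceding the theorem. The direct computation you sketch for positivity (testing with $-u^-$, using Proposition~\ref{prop:form-properties}(iv), the chain rule $\tfrac12\tfrac{d}{dt}\|u^-\|_H^2=-\langle\dot u,u^-\rangle$, and Gronwall) is a valid unpacking of that citation and correctly identifies the one delicate point, namely that $u^-\in W(s,T;V,V')$ with the requisite chain rule.
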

The following perturbation result is crucial for our treatment of the logistic equation.
\begin{theorem}[perturbation result]
  \label{thm:linear-perturbation}
  Suppose that $m_n,m\in L^\infty((0,T),L^\infty(\Omega))$, that $u_{0n},u_0\in H$ and $f_n,f\in L^2((0,T),V)$ with $m_n\stackrel{*}{\rightharpoonup}m$ weak$^*$ in $L^\infty((0,T),L^\infty(\Omega))$, $u_{0n}\rightharpoonup u_0$ weakly in $H$ and $f_n\rightharpoonup f$ weakly in $L^2((0,T),V')$. Let $s\in[0,T)$ and let $u_n\in W(s,T;V,V')$ be the solution of
        \begin{equation}
          \label{eq:pp-abstract-perturbed-n}
          \begin{aligned}
            \dot u_n+A(t)u+m_n(t)u & =f_n(t)  &  & t\in(s,T], \\
            u(s)                   & =u_{0n}. &  &
          \end{aligned}
        \end{equation}
        Let $u$ be the solution of \eqref{eq:pp-abstract-perturbed}. Then $u_n\rightharpoonup u$ weakly in $W(s,T;V,V')$ and strongly in $L^2((s,T),H)$ as $n\to\infty$. If $f_n\to f$ strongly in $L^2((s,T),V')$, then $u_n(t)\to u(t)$ in $H$ for all $t\in (s,T]$ and $u_n\to u$ in $W(s+\delta,T;V,V')$ for all $\delta>0$ with $s+\delta<T$.
\end{theorem}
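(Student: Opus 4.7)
The approach combines uniform a priori estimates and weak compactness to extract a weak limit in $W$, uses the compact embedding $W\hookrightarrow L^2((s,T),H)$ to treat the product term $m_n u_n$, and finally exploits parabolic smoothing through an energy identity on a subinterval $(t_0,T)$ for the refined statements.

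First I would establish $\|u_n\|_W\leq C$. Since $m_n\stackrel{*}{\rightharpoonup}m$, $u_{0n}\rightharpoonup u_0$, $f_n\rightharpoonup f$, uniform boundedness gives bounds on $\|m_n\|_\infty$, $\|u_{0n}\|_H$, $\|f_n\|_{L^2(V')}$; Proposition~\ref{prop:a-priori-estimates}, applied to signed $m_n$ after the standard exponential shift, then yields the uniform $W$-bound. By reflexivity and \eqref{eq:W-to-L-compact}, a subsequence $u_{n_k}\rightharpoonup\tilde u$ in $W$ and $u_{n_k}\to\tilde u$ strongly in $L^2((s,T),H)$. To identify the limit, I would pass to the limit in the weak formulation \eqref{eq:linear-weak-solution}: the linear terms go by weak convergence, and I decompose $m_{n_k}u_{n_k}=m_{n_k}(u_{n_k}-\tilde u)+m_{n_k}\tilde u$. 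The first piece vanishes in $L^2((s,T),L^2(\Omega))$ because $\|m_{n_k}\|_\infty$ is bounded and $u_{n_k}\to\tilde u$ strongly in $L^2(H)$; the second converges since $\int_s^T\!\!\int_\Omega m_{n_k}\tilde u\,\phi\,dx\,d\tau\to\int_s^T\!\!\int_\Omega m\,\tilde u\,\phi\,dx\,d\tau$, as $\tilde u\phi\in L^1$ and $m_{n_k}\stackrel{*}{\rightharpoonup}m$. Hence $\tilde u$ solves \eqref{eq:pp-abstract-perturbed}, uniqueness (Theorem~\ref{thm:linear-existence}) forces $\tilde u=u$, and the full sequence converges.

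For the refined statements with $f_n\to f$ strongly, set $w_n:=u_n-u$, solving $\dot w_n+A(t)w_n+m_n(t)w_n=(f_n-f)+(m-m_n)u$ with $w_n(s)=u_{0n}-u_0$. Since $w_n\to 0$ in $L^2((s,T),H)$, we have $w_n(t_0)\to 0$ in $H$ for a.e.\ $t_0\in(s,s+\delta)$; fix such a $t_0$. Pairing the equation with $w_n$ on $(t_0,t)$ and applying \eqref{eq:integration-by-parts}, coercivity \eqref{eq:a-coercive}, and positivity of $m_n$ (after the shift) gives
\begin{equation*}
\tfrac12\|w_n(t)\|_H^2+\tfrac{\alpha}{2}\!\int_{t_0}^t\!\|w_n\|_V^2\,d\tau \leq \tfrac12\|w_n(t_0)\|_H^2+\omega_0\!\int_{t_0}^t\!\|w_n\|_H^2\,d\tau+\!\int_{t_0}^t\!\langle g_n,w_n\rangle\,d\tau,
\end{equation*}
with $g_n=(f_n-f)+(m-m_n)u$. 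The three easy right-hand terms vanish by choice of $t_0$, strong $L^2(H)$-convergence of $w_n$, and strong convergence of $f_n$. The remaining term is controlled by the direct estimate $|\int_{t_0}^T\!\!\int_\Omega(m-m_n)\,u\,w_n\,dx\,d\tau|\leq\|m-m_n\|_\infty\|u\|_{L^2(L^2)}\|w_n\|_{L^2(L^2)}\to 0$, which crucially sidesteps any need for strong $L^2(V')$-convergence of $(m-m_n)u$ itself. This yields uniform $H$-convergence on $[t_0,T]$ and strong $L^2((t_0,T),V)$-convergence; letting $t_0\downarrow s$ gives the pointwise statement for all $t\in(s,T]$, and rewriting $\dot w_n$ from the equation then produces the strong $W(s+\delta,T;V,V')$-convergence.

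The principal obstacle is handling the product $m_n u_n$ in both the passage to the limit and in the energy identity, where the weak-$*$ convergence of $m_n$ alone is insufficient. The recurring device is to always pair the weak-$*$-convergent factor $m_n$ (or $m-m_n$) against a strongly $L^2(H)$-convergent factor, reducing the troublesome term to an elementary $L^\infty\times L^2\times L^2$ estimate.
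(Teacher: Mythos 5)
Your first two paragraphs match the paper's argument closely: extract a bounded sequence via Proposition~\ref{prop:a-priori-estimates}, pass to a weak limit in $W$ and a strong limit in $L^2((s,T),H)$ using \eqref{eq:W-to-L-compact}, identify the limit via the weak formulation (pairing the weak$^*$-convergent $m_{n_k}$ against the strongly convergent factor), and use uniqueness. The energy argument for the pointwise $H$-convergence and the strong $L^2((t_0,T),V)$-convergence is also sound; your decomposition keeps $m_nw_n$ on the left and $(m-m_n)u$ on the right, whereas the paper keeps the full $m_nu_n-mu$ on the right and pairs weak against strong, but both reach the same estimate. One small slip: $w_n\to 0$ in $L^2((s,T),H)$ does not give $w_n(t_0)\to 0$ for a.e.\ $t_0$ for the full sequence, only for a subsequence. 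The paper is careful here: it extracts a subsequence with $u_{n_k}(s_0)\to u(s_0)$, proves $u_{n_k}(t)\to u(t)$, and then invokes uniqueness of the limit to upgrade to the full sequence. Your argument needs the same step.

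The genuine gap is the last sentence, where you assert that ``rewriting $\dot w_n$ from the equation then produces the strong $W(s+\delta,T;V,V')$-convergence.'' From
\begin{equation*}
  \dot w_n = -A(t)w_n - m_n(t)w_n + (f_n-f) + (m-m_n)u,
\end{equation*}
the first three terms on the right tend to $0$ in $L^2((s+\delta,T),V')$ by what you have already shown. But the fourth term $(m-m_n)u$ does \emph{not}: weak$^*$ convergence of $m_n$ only gives $(m-m_n)u\rightharpoonup 0$ weakly in $L^2((s,T),H)$, and no $L^\infty\times L^2\times L^2$-type estimate is available because the factor $u$ is fixed, not vanishing. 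Your remark that you ``sidestep the need for strong $L^2(V')$-convergence of $(m-m_n)u$'' is true for the energy identity, where $(m-m_n)u$ is paired against the strongly vanishing $w_n$, but precisely this term reappears by itself in the estimate of $\dot w_n$. The paper closes this gap with a nontrivial step: it applies van Neerven's compactness criterion for Lebesgue--Bochner spaces, exploiting the compact embedding $H\hookrightarrow V'$ together with scalar compactness, equi-integrability, and uniform tightness of $(m_nu)_n$, to conclude that $m_nu\to mu$ strongly in $L^2((s,T),V')$. Without this (or an equivalent compactness argument) the strong $W$-convergence claim is unproven.
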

\begin{proof}
  We first note that due to the weak$^*$ convergence there exists $C>0$ such that $\|m_n\|_\infty\leq C$ for all $n\in\mathbb N$. Hence it follows from Proposition~\ref{prop:a-priori-estimates} that $(u_n)_{n\in\mathbb N}$ is a bounded sequence in $W(s,T;V,V')$. Hence there exists a subsequence $(u_{n_k})_{k\in\mathbb N}$ such that $u_{n_k}\rightharpoonup u$ weakly in $W(s,T;V,V')$ as $k\to\infty$ for some $u\in W(s,T;V,V')$. By the compact embedding \eqref{eq:W-to-L-compact} we also have strong convergence in $L^2((s,T), H)$. If $v\in W(s,T;V,V')$ with $v(T)=0$, then
  \begin{equation*}
    \begin{aligned}
      -\langle u_{n_k}(s),\dot v(s)\rangle
       & +\int_s^T\langle u_{n_k}(\tau),\dot v(\tau)\rangle\,d\tau
      +\int_s^T\aaa(\tau,u_{n_k}(\tau),v(\tau))\,d\tau                     \\
       & +\int_s^T\langle m_{n_k}(\tau)u_{n_k}(\tau),v(\tau)\rangle\,d\tau
      =\int_s^T\langle f_{n_k}(\tau),v(\tau)\rangle\,d\tau
    \end{aligned}
  \end{equation*}
  for all $k\in\mathbb N$. Letting $k\to\infty$ we see that $u$ is a solution of~\eqref{eq:pp-abstract-perturbed}. By the uniqueness of solutions the full sequence converges. Assume now that $f_n\to f$ strongly. Fix $t\in(s,T]$. Since $u_n\to u$ in $L^2((s,T),H)$ and thus has a subsequence that is convergent almost everywhere there exist $s_0\in (s,t)$ and a subsequence $(u_{n_k})_{k\in\mathbb N}$ such that $u_{n_k}(s_0)\to u(s_0)$ in $H$ as $k\to\infty$. By the integration by parts formula similar to \eqref{eq:integration-by-parts} we have that
  \begin{equation}
    \label{eq:un-estimate}
    \begin{split}
      \alpha & \int_{s_0}^t\|u_n(\tau)-u(\tau)\|_V^2\,d\tau+\frac{1}{2}\|u_n(t)-u(t)\|_H^2                                            \\
             & \leq \frac{1}{2}\|u_n(s_0)-u(s_0)\|_H^2+\int_{s_0}^t\langle\dot u_n(\tau)-\dot u(\tau),u_n(\tau)-u(\tau)\rangle\,d\tau \\
             & \qquad+\int_{s_0}^t\aaa(\tau,u_n(\tau)-u(\tau),u_n(\tau)-u(\tau))\,d\tau                                               \\
             & \qquad+\omega\int_{s_0}^t\|u_n(\tau)-u(\tau)\|_H^2\,d\tau                                                              \\
             & =\int_{s_0}^t\langle f_n(\tau)-f(\tau),u_n(\tau)-u(\tau)\rangle\,d\tau
      +\omega\int_{s_0}^t\|u_n(\tau)-u(\tau)\|_H^2\,d\tau                                                                             \\
             & \qquad-\int_{s_0}^t\langle m_n(\tau)u_n(\tau)-m(\tau)u(\tau),u_n(\tau)-u(\tau)\rangle\,d\tau.
    \end{split}
  \end{equation}
  We know that $u_n-u\to 0$ in $L^2((s,T),H)$ and $m_nu_n-mu\rightharpoonup 0$ weakly in $L^2((s,T),H)$. By assumption $f_n-f\to 0$ in $L^2((s,T),V')$ and $u_{n_k}(s_0)-u(s_0)\to 0$ in $H$. Hence \eqref{eq:un-estimate} shows that
  \begin{equation*}
    \lim_{k\to\infty}\|u_{n_k}(t)-u(t)\|_H^2=0.
  \end{equation*}
  By the uniqueness of the limit, the full sequence converges, which in particular means that $u_n(t)\to u(t)$ in $H$ for all $t\in (s,T]$. Hence, choosing any $s_0\in(s,T]$ it follows that $u_n\to u$ in $L^2((s_0,T),V)$ as $n\to\infty$. Next we consider the convergence of $\dot u_n$. Given $v\in V$ we have that
  \begin{align*}
    |\langle\dot u_n & (\tau)-\dot u(\tau),v\rangle|                                                                               \\
                     & =\bigl|\langle A(\tau)(u_n(\tau)-u(\tau))
    +(m_n(\tau)u_n(\tau)-m(\tau)u(\tau))-(f_n(\tau)-f(\tau)),v\rangle\bigr|                                                        \\
                     & \leq \bigl(M\|u_n(\tau)-u(\tau)\|_V+\|m_n(\tau)u_n(\tau)-m(\tau)u(\tau)\|_{v'}+\|f_n-f\|_{V'}\bigr)\|v\|_V.
  \end{align*}
  Hence by definition of the dual norm
  \begin{equation}
    \label{eq:un-derivative-estimate}
    \begin{split}
      \|\dot u_n(\tau) & -\dot u(\tau)\|_{V'}                                                                    \\
                       & \leq M\|u_n(\tau)-u(\tau)\|_V+\|m_n(\tau)u_n(\tau)-m(\tau)u(\tau)\|_{v'}+\|f_n-f\|_{V'}
    \end{split}
  \end{equation}
  for all $\tau\in(s,T]$. We know from the first part of the proof that $m_n(\tau)u_n(\tau)\rightharpoonup m(\tau)u(\tau)$ weakly in $L^2((s,T),H)$. If we can show that $m_nu_n\to mu$ in $L^2((s,T),V')$, then it follows from \eqref{eq:un-derivative-estimate} that $\dot u_n\to \dot u$ in $L^2((s_0,T),V')$ as $n\to\infty$. Since
  \begin{equation*}
    \|m_n u_n-mu\|_{V'}\leq\|m_n\|_\infty\|u_n-u\|_{V'}+\|(m_n-m)u\|_{V'}
  \end{equation*}
  it is sufficient to show that $m_nu\to mu$ in $L^2((s,T),V')$. To do so we use the compactness criterion \cite[Theorem~1]{vannerven:14:clb} for vector valued $L^p$-spaces. We know that $m_nu\rightharpoonup mu$ weakly in $L^2((s,T),H)$. Moreover, given that $\|m_n\|\leq C$ for all $n\in\mathbb N$, we have
  \begin{equation*}
    |\langle m_n(\tau)u(\tau)-m(\tau)u(\tau),v\rangle|\leq 2C\|u(\tau)\|_H\|v\|_H
  \end{equation*}
  for all $v\in V$ and $\tau\in (s,T)$. Hence by the dominated convergence theorem
  \begin{equation*}
    \lim_{n\to\infty}\int_s^T|\langle m_n(\tau)u(\tau)-m(\tau)u(\tau),v\rangle|^2\,d\tau =  0.
  \end{equation*}
  The convergence means that the family $(m_nu)_{n\in\mathbb N}$ is scalarly compact. Fix $\varepsilon>0$ and let
  \begin{equation*}
    J_r:=\{\tau\in(s,T)\colon C\|u(\tau)\|_H\geq r\}.
  \end{equation*}
  We know that for every $\varepsilon>0$ there exists $r>0$ such that
  \begin{equation*}
    \int_{J_r}\|Cu(\tau)\|_H^2\,d\tau<\varepsilon.
  \end{equation*}
  It implies that
  \begin{equation*}
    \int_{J_r}\|m_n(\tau)u(\tau)\|_H^2\,d\tau<\varepsilon
  \end{equation*}
  for all $n\in\mathbb N$. Hence, as $H\hookrightarrow V'$, the family $(m_nu)_{n\in\mathbb N}$ is equi-integrable in $L^2((s,T),V')$. We can also choose $r>0$ such that $(s,T)\setminus J_r$ has measure less than $\varepsilon$. Also,
  \begin{equation*}
    m_n(\tau)u(\tau)\in B_r:=\{v\in H\colon \|v\|_H\leq r/c\}
  \end{equation*}
  for all $n\in\mathbb N$. By the compact embedding $H\hookrightarrow V$ it follows that $B_r\subseteq V'$ is relatively compact in $V'$. It means that $m_n(\tau)u(\tau)$ is in the relatively compact set $B_r$ for all $\tau\in J_r$ and thus the family $(m_nu)_{n\in\mathbb N}$ is uniformly tight as defined in \cite{vannerven:14:clb}. It follows that $(m_nu)_{n\in\mathbb N}$ is relatively compact in $L^2((s,T),V')$ as claimed.
\end{proof}

\section{Evolution systems and their properties}
\label{sec:evolution-systems}
As a special case of \eqref{eq:linear-abstract-equation} we can look at solutions to the homogeneous problem
\begin{equation}
  \label{eq:linear-evolution}
  \begin{aligned}
    \dot{u}+A(t)u & =0\qquad t\in(s,T] \\
    u(s)          & =v
  \end{aligned}
\end{equation}
for $v\in H$. The unique solvability of the linear homogeneous problem \eqref{eq:linear-evolution} stated in Theorem~\ref{thm:linear-existence} allows us to define
\begin{equation*}
  U(t,s)v:=u(t)
\end{equation*}
where $u$ is the unique solution of \eqref{eq:linear-evolution}. By the linearity, $U(t,s)$ is a linear operator on $H$. We call the family $(U(t,s))_{(t,s)\in\Delta_T}$ the \emph{evolution system} on $H$ associated with $(A(t))_{t\in[0,T]}$, where
\begin{equation}
  \label{eq:Delta}
  \Delta_T:=\{(t,s)\colon 0\leq s\leq t\leq T\}.
\end{equation}
We furthermore set
\begin{equation}
  \label{eq:dDelta}
  \dot\Delta_T:=\{(t,s)\colon 0\leq s<t\leq T\}.
\end{equation}
We denote by $\mathcal L_s(H)$ is the space of bounded linear operators on $H$ with the strong operator topology, that is, the topology of pointwise convergence. We next collect the properties of the evolution system we need, see also \cite[Chapter~3]{zeaiter:24:psg}.

\begin{proposition}[Properties of evolution system]
  \label{prop:evolution-system}
  The evolution system has the following properties:
  \begin{enumerate}[\normalfont (U1)]
  \item $U(t,t)=I$ for all $t\in[0,T]$;
  \item $U(t,s)=U(t,\tau)U(\tau,s)$ for all $0\leq s\leq\tau\leq t\leq T$;
  \item $U(\cdot,s)\in C([s,T],\mathcal L_s(H))$ for all $s\in[0,T)$;
  \item $U(t,\cdot)\in C\left([0,t],\mathcal L_s(H)\right)$ for all $t\in(0,T]$;
  \item $U(t,s)\in\mathcal L(H,BC(\Omega))$ and there exists $C>0$ with
    \begin{equation}
      \label{eq:2-infty-estimate}
      \|U(t,s)\|_{\mathcal L(L^2,L^\infty)}\leq C(t-s)^{-N/4}
    \end{equation}
    for all $(t,s)\in\dot\Delta_T$;
  \item $U(t,s)\in\mathcal L(L^\infty(\Omega))$ for all $(t,s)\in\Delta_T$ and $\sup_{(t,s)\in\Delta_T}\|U(t,s)\|_{\mathcal L(L^\infty(\Omega))}<\infty$;
  \item If $v>0$, then $[U(t,s)v](x)>0$ for all $(t,s)\in\dot\Delta_T$ and all $x\in\Omega$;
  \item $U(t,s)\in\mathcal L(H)$ is compact for all $(t,s)\in\dot\Delta_T$.
  \end{enumerate}
  Moreover, if $f\in L^2((s,T),H)$ and $v\in H$, then the solution $u$ of \eqref{eq:linear-abstract-equation} can be represented in the form
  \begin{equation}
    \label{eq:variation-of-constants}
    u(t)=U(t,s)v+\int_s^tU(t,\tau)f(\tau)\,d\tau.
  \end{equation}
\end{proposition}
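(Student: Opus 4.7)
The plan is to verify the eight properties in the order listed. Properties (U1) and (U2) are direct consequences of Theorem~\ref{thm:linear-existence}: (U1) holds because the constant function $u \equiv v$ solves \eqref{eq:linear-evolution} on the degenerate interval $[t, t]$; (U2) follows because, for fixed $v \in H$, both sides of the asserted identity provide solutions of the same Cauchy problem on $[\tau, T]$ with initial value $U(\tau, s) v$ at time $\tau$, and uniqueness forces them to agree. Property (U3) is immediate from the embedding $W(s, T; V, V') \hookrightarrow C([s, T], H)$ in \eqref{eq:W-to-C}: the solution $t \mapsto U(t, s) v$ belongs to $C([s, T], H)$ for each $v \in H$, and since $\mathcal L_s(H)$ carries the topology of pointwise convergence this is precisely the content of (U3).

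Property (U4) is more delicate. Fix $t \in (0, T]$ and $v \in H$; the a priori estimate \eqref{eq:linear-solution-estimate} supplies a uniform bound $C_0 := \sup_{(r, \sigma) \in \Delta_T} \|U(r, \sigma)\|_{\mathcal L(H)} < \infty$. If $s_n \to s$ with $s_n \geq s$, then (U2) gives $U(t, s) v - U(t, s_n) v = U(t, s_n)[U(s_n, s) v - v]$; by (U3), $U(s_n, s) v \to v$ in $H$, and the uniform bound $C_0$ yields the claim. If $s_n \to s$ with $s_n \leq s$, write instead $U(t, s_n) v - U(t, s) v = U(t, s)[U(s, s_n) v - v]$, so it suffices to show $U(s, s_n) v \to v$ in $H$. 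For $v \in V$ the difference $z_n(r) := U(r, s_n) v - v$ solves $\dot z_n + A(r) z_n = -A(r) v$ with $z_n(s_n) = 0$; since $A(\cdot) v \in L^\infty((0, T), V')$ is uniformly bounded by $M \|v\|_V$, Proposition~\ref{prop:a-priori-estimates} gives $\|z_n(s)\|_H \leq C \sqrt{s - s_n}\, \|v\|_V \to 0$. The density of $V$ in $H$ combined with the uniform bound $C_0$ extends this to arbitrary $v \in H$ by a standard approximation argument.

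Properties (U5), (U6) and (U7) require analytic input from the theory of second-order parabolic equations in divergence form that lies outside the abstract machinery developed so far. The ultracontractive bound (U5) is proved by a Nash-Moser iteration coupling Proposition~\ref{prop:a-priori-estimates} with the Nash-Gagliardo-Nirenberg inequality to derive a differential inequality for $\|u(t)\|_H$ in terms of $\|u(t)\|_{L^1}$, whose integration produces the $(t - s)^{-N/4}$ blow-up; the version appropriate to mixed boundary conditions on general bounded domains is essentially the one in \cite{daners:15:ppe}. Property (U6) follows from a weak maximum principle: testing the equation against $(u - K e^{\omega (t - s)})^+$ with $K := \|v\|_\infty$ and $\omega$ as in \eqref{eq:a-coercive}, using coercivity and $\beta_0 \geq 0$, forces this positive part to vanish identically and gives a uniform bound over the compact set $\Delta_T$. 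Property (U7) is the parabolic strong maximum principle / Harnack inequality for weak solutions of divergence-form equations, available in the required generality from the De Giorgi-Nash-Moser theory.

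For (U8), given a bounded sequence $(v_n)$ in $H$, weak compactness produces a subsequence $v_{n_k} \rightharpoonup v$; Theorem~\ref{thm:linear-perturbation} applied with $m_n = m = 0$ and $f_n = f = 0$ then gives $U(t, s) v_{n_k} \to U(t, s) v$ strongly in $H$ for every $t > s$, so $U(t, s)$ is compact. For the variation-of-constants formula \eqref{eq:variation-of-constants}, first check it directly when $f$ is a step function $\sum_i g_i \chi_{[a_i, a_{i+1}]}$ with $g_i \in H$, piecing the solution together on successive intervals via (U2); by linearity this covers all step functions, which are dense in $L^2((s, T), H)$. For general $f$, take step-function approximants $f_n \to f$ in $L^2((s, T), H)$ and pass to the limit using Theorem~\ref{thm:linear-perturbation} on the left and, on the right, the uniform operator bound $C_0$ combined with Cauchy-Schwarz. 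The main obstacle is (U5), which sits outside the abstract framework and must be imported from the classical parabolic regularity literature; once this and the strong maximum principle (U7) are in hand, the remaining properties flow cleanly from the a priori estimates and Theorem~\ref{thm:linear-perturbation}.
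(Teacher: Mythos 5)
Your proof is broadly correct, but it takes genuinely different routes from the paper in several places, and one step is underspecified.

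For (U1)--(U3) you and the paper agree. For (U4) the paper appeals to the dual problem (citing \cite{daners:00:hke}), while you give a self-contained argument via the cocycle property, (U3), the a priori estimates of Proposition~\ref{prop:a-priori-estimates}, and a density argument. Your version is elementary and stays entirely inside the abstract framework; the estimate $\|U(s,s_n)v - v\|_H \leq C\sqrt{s-s_n}\,\|v\|_V$ for $v\in V$ is correct (taking $f=-A(\cdot)v$ with $\|A(\tau)v\|_{V'}\leq M\|v\|_V$ in \eqref{eq:a-priori-estimate}), and the extension to $v\in H$ works because the uniform bound $C_0$ on $\|U(t,s)\|_{\mathcal L(H)}$ is independent of $s$. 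This is a legitimate alternative to the paper's argument. For (U5) you correctly identify that this requires classical parabolic regularity theory outside the abstract machinery, but you cite the wrong source: the paper imports the ultracontractivity estimate and the $L^\infty$-bound (U6) from \cite[Corollary~7.2]{daners:00:hke}, not from \cite{daners:15:ppe}. Your sketch of (U6) via a weak maximum principle testing against $(u-Ke^{\omega(t-s)})^+$ is a different but standard route. (U7) matches the paper's appeal to the Harnack inequality / strong maximum principle.

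For (U8), your argument via Theorem~\ref{thm:linear-perturbation} (take $m_n=m=0$, $f_n=f=0$, $v_{n_k}\rightharpoonup v$, conclude $U(t,s)v_{n_k}\to U(t,s)v$ strongly) is a clean, direct alternative. The paper instead factors $U(t,s)=U(t,\tau)U(\tau,s)$ and invokes a domination-and-smoothing result from \cite[Theorem~2.2]{daners:17:rds} together with (U5). Both are valid; yours relies only on the abstract perturbation theorem already proved, which is arguably simpler.

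The genuine gap is in the variation-of-constants formula \eqref{eq:variation-of-constants}. You propose to verify it first for step functions $f=\sum_i g_i\chi_{[a_i,a_{i+1}]}$ ``by checking directly'' and ``piecing together via (U2),'' but the base case---that for constant $f=g\in H$ with zero initial datum the function $u(t)=\int_s^tU(t,\tau)g\,d\tau$ lies in $W(s,T;V,V')$ and solves $\dot u+A(t)u=g$---is itself nontrivial in the weak setting. Piecing across subintervals only reduces a step function to the constant case; it does not establish that case. The formal differentiation $\partial_t\int_s^tU(t,\tau)g\,d\tau=g-A(t)u(t)$ requires justifying the differentiability of $t\mapsto U(t,\tau)$, which is exactly the delicate point. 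The paper avoids this by proving the formula for finite-dimensional Galerkin approximations (where it is a standard ODE fact) and passing to the limit, or by citing \cite[Theorem~9]{aronson:68:nsl} in the Dirichlet case. You should either adopt the Galerkin route or supply the missing verification for the base case; as written the inductive step rests on an unproved foundation.
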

\begin{proof}
  Properties (U1)--(U3) are a direct consequence of Theorem~\ref{thm:linear-existence} and \eqref{eq:W-to-C}. Property (U4) follows by looking at the dual problem as in \cite[Theorem~2.6]{daners:00:hke}. It follows from heat kernel estimates, for instance in \cite[Corollary~7.2]{daners:00:hke}, that $U(t,s)\in\mathcal L(H,L^\infty(\Omega))$ for all $(t,s)\in\dot\Delta_T$ with the given estimate. The continuity of $U(t,s)v$ as a function of $x\in\Omega$ then follows from standard H\"older estimates for parabolic equations such as those in \cite{aronson:68:nsl}. The boundedness on $L^\infty(\Omega)$ in (U6) follows from \cite[Corollary~7.2]{daners:00:hke}.

  If all coefficients of $\mathcal A(t)$ and $\mathcal B(t)$ and the domain $\Omega$ are smooth, then the parabolic maximum principle implies that $[U(t,s)v](x)>0$ for all $(x,t)\in\Omega\times(s,T]$. Using \cite[Theorem~8.3 and Lemma~8.4]{daners:00:hke} the same statement holds in general. Alternatively one could use the parabolic Harnack inequality from \cite{aronson:67:lbs}. This proves (U7).

  To prove (U8) we note that $L^\infty(\Omega)$ is the principal ideal generated by the constant function with value one in the Banach lattice $H$. By (U5) we have $U(t,s)H\subseteq L^\infty(\Omega)$ for all $(t,s)\in\dot\Delta_T$. As $U(t,s)=U(t,\tau)U(\tau,s)$ whenever $0\leq s<\tau<t\leq T$ it follows from \cite[Theorem~2.2]{daners:17:rds} that $U(t,s)\in\mathcal L(H)$ is compact.

  If we deal with Dirichlet boundary conditions, then \eqref{eq:variation-of-constants} follows from \cite[Theorem~9]{aronson:68:nsl}. In a more abstract setting, using the Galerkin approximation, the formula holds for the finite dimensional approximations, and remains valid by taking a limit.
\end{proof}

We refer to \eqref{eq:variation-of-constants} as the the \emph{variation-of-constants} formula. We next prove a perturbation theorem, weaker than that usually found in the literature. The proof only relies on the properties of the evolution system and is valid in other settings as well, but to keep our exposition simple we refrain from proving a more general version.

\begin{theorem}
  \label{thm:evolution-convergence}
  Suppose that $b_n\in L^\infty([0,T],L^\infty(\Omega))$ is such that $b_n(\tau)\stackrel{*}{\rightharpoonup}b(\tau)$ weak$^*$ in $L^\infty(\Omega)$ for almost all $\tau\in (0,T)$. Let $(U_n(t,s))_{(t,s)\in\Delta_T}$ be the evolution system associated with $(A(t)+b_n(t)))_{t\in[0,T]}$ and let $(U(t,s))_{(t,s)\in\Delta_T}$ be the evolution system associated with $(A(t)+b(t)))_{t\in[0,T]}$. Then $U_n(t,s)\to U(t,s)$ in $\mathcal L(H)$ as $n\to\infty$.
\end{theorem}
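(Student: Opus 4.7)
The strategy is to combine the strong-operator convergence supplied by Theorem~\ref{thm:linear-perturbation} with the compactness of the limit operator $U(t,s)$ from (U8), via a weak-compactness and subsequence argument. Norm convergence in $\mathcal L(H)$ does not follow from strong-operator convergence alone in general, so the compactness of $U(t,s)$ will be the essential second ingredient.

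First, a small preparatory step. The pointwise weak$^*$ convergence $b_n(\tau)\stackrel{*}{\rightharpoonup}b(\tau)$ for a.e.\ $\tau$, together with the uniform bound $\sup_n\|b_n\|_{L^\infty((0,T),L^\infty(\Omega))}<\infty$ coming from Banach-Steinhaus applied pointwise plus a standard reduction, upgrades by dominated convergence tested against the predual $L^1((0,T),L^1(\Omega))$ to global weak$^*$ convergence $b_n\stackrel{*}{\rightharpoonup}b$ in $L^\infty((0,T),L^\infty(\Omega))$. Hence the hypothesis on $m_n$ in Theorem~\ref{thm:linear-perturbation} is satisfied whenever we need it below.

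Now fix $(t,s)\in\dot\Delta_T$; the case $t=s$ is trivial since $U_n(s,s)=I=U(s,s)$. Suppose, for contradiction, that $\|U_n(t,s)-U(t,s)\|_{\mathcal L(H)}\not\to 0$. After passing to a subsequence we may assume there exist $\varepsilon>0$ and $v_n\in H$ with $\|v_n\|_H\le 1$ and $\|(U_n(t,s)-U(t,s))v_n\|_H\ge\varepsilon/2$ for all $n$. By reflexivity of $H$, a further subsequence (still denoted $v_n$) satisfies $v_n\rightharpoonup v_*$ weakly in $H$. Apply Theorem~\ref{thm:linear-perturbation} with $m_n=b_n$, $m=b$, $u_{0n}=v_n\rightharpoonup v_*=u_0$ and $f_n=f=0$: the hypotheses hold, and since $f_n\to f$ trivially strongly, its conclusion gives $U_n(t,s)v_n\to U(t,s)v_*$ strongly in $H$. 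On the other hand, because $s<t$, the operator $U(t,s)\in\mathcal L(H)$ is compact by (U8), so it sends the weakly null sequence $v_n-v_*$ to a strongly null one, giving $U(t,s)v_n\to U(t,s)v_*$ in $H$. Subtracting,
\begin{equation*}
(U_n(t,s)-U(t,s))v_n=\bigl(U_n(t,s)v_n-U(t,s)v_*\bigr)-\bigl(U(t,s)v_n-U(t,s)v_*\bigr)\longrightarrow 0
\end{equation*}
in $H$, contradicting the lower bound $\varepsilon/2$.

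The conceptual heart of the argument is the application of Theorem~\ref{thm:linear-perturbation} with \emph{varying}, only weakly convergent initial data $u_{0n}=v_n$; that result is already strong enough to handle this situation, and once one has it the proof reduces to a clean compactness-plus-subsequence argument. The only routine subtlety is the upgrade from the pointwise a.e.\ weak$^*$ hypothesis on $b_n(\tau)$ to global weak$^*$ convergence in $L^\infty((0,T),L^\infty(\Omega))$, handled in the preparatory step.
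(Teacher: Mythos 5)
Your argument is correct and is essentially the same as the paper's: the paper cites an abstract domain-perturbation lemma (Proposition~4.1.1 in \cite{daners:08:dpl}) that encapsulates exactly the compactness-plus-subsequence argument you carry out by hand — strong convergence $U_n(t,s)v_n\to U(t,s)v$ on weakly convergent $v_n\rightharpoonup v$, combined with compactness of the limit operator $U(t,s)$, yields convergence in $\mathcal L(H)$. Your unwrapping of that lemma is clean and accurate, including the observation that the case $s=t$ is trivial and that the key ingredient is applying Theorem~\ref{thm:linear-perturbation} with \emph{varying} weakly convergent initial data.

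One caveat concerns your preparatory step. You are right that you need to bridge from the pointwise a.e.\ weak$^*$ hypothesis on $b_n(\tau)$ to the global weak$^*$ hypothesis in $L^\infty((0,T),L^\infty(\Omega))$ that Theorem~\ref{thm:linear-perturbation} requires, and that a uniform bound $\sup_n\|b_n\|_{L^\infty((0,T),L^\infty(\Omega))}<\infty$ plus dominated convergence gets you there. However, the claim that this uniform bound ``comes from Banach--Steinhaus applied pointwise plus a standard reduction'' is not correct. Pointwise Banach--Steinhaus only gives $\sup_n\|b_n(\tau)\|_{L^\infty(\Omega)}=:M(\tau)<\infty$ for a.e.\ $\tau$, and $M(\cdot)$ need not be essentially bounded; indeed one can construct $b_n$ with $b_n(\tau)\to 0$ strongly in $L^\infty(\Omega)$ for every $\tau$ while $\|b_n\|_{L^\infty((0,T),L^\infty(\Omega))}\to\infty$ and global weak$^*$ convergence fails. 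The uniform bound must therefore be taken as an additional (implicit) hypothesis; in all the paper's applications (e.g.\ the cutoffs in Remark~\ref{rem:cutoff-convergence}) it holds because the perturbations are dominated by a fixed $L^\infty$ function. To be fair, the paper's own proof passes silently over the same point, so this is a shared rather than a new gap, but it is worth being explicit about the assumption rather than attributing it to a nonexistent reduction.
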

\begin{proof}
  Let $0\leq s<t\leq T$. By taking $f_n=0$ it follows from Theorem~\ref{thm:linear-perturbation} that $U(t,s)v_n\to U(t,s)v$ whenever $v_n\rightharpoonup v$ weakly in $H$. By Proposition~\ref{prop:evolution-system} $U(t,s)$ is compact. Hence, by \cite[Proposition~4.1.1]{daners:08:dpl} it follows that $U_n(t,s)\to U(t,s)$ in $\mathcal L(H)$ as $n\to\infty$.
\end{proof}

Using \cite[Proposition~2.2]{arendt:23:see} we obtain as a corollary to Theorem~\ref{thm:evolution-convergence} the following perturbation result on the spectral radius of $U(t,s)$. The result is also related to \cite[Lemma~15.7]{hess:91:ppb}, but uses much less regularity.

\begin{corollary}
  \label{cor:spectral-radius-convergence}
  Under the assumption of Theorem~\ref{thm:evolution-convergence} we have that,
  \begin{equation*}
    \spr(U_{n}(t,s)) \to \spr(U(t,s))
  \end{equation*}
  for all $(t,s)\in\dot\Delta_T$. Moreover, if $v_n$ is the positive eigenfunction of $U_n(t,s)$ with $\|v_n\|_2=1$, then $v_n\to v$ in $H$ and $v$ is the positive eigenfunction of $U(t,s)$.
\end{corollary}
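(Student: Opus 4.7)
The plan is to reduce the corollary to an application of Theorem~\ref{thm:evolution-convergence} combined with the cited abstract spectral continuity result \cite[Proposition~2.2]{arendt:23:see} for positive compact operators. Fix $(t,s)\in\dot\Delta_T$. Theorem~\ref{thm:evolution-convergence} delivers norm convergence $U_n(t,s)\to U(t,s)$ in $\mathcal L(H)$. Since the perturbed forms $A(t)+b_n(t)$ still satisfy the standing assumptions (the zero-order term of $\mathcal A(t)$ is merely augmented by $b_n\in L^\infty$), Proposition~\ref{prop:evolution-system} applies equally to the family $U_n$. In particular, each $U_n(t,s)$ and $U(t,s)$ is compact on $H$ by (U8) and strongly positive by (U7). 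Standard Krein--Rutman then guarantees that each spectral radius $\spr(U_n(t,s))$ and $\spr(U(t,s))$ is a positive simple eigenvalue with a strictly positive normalised eigenfunction, so the objects $v_n,v$ in the statement are well defined.

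With those hypotheses in place, \cite[Proposition~2.2]{arendt:23:see} applies directly: norm convergence of a sequence of compact positive operators each admitting a positive eigenfunction at the spectral radius yields convergence of both the spectral radii and the normalised positive eigenfunctions. This gives $\spr(U_n(t,s))\to \spr(U(t,s))$ together with $v_n\to v$ in $H$, and $v$ is necessarily the positive eigenfunction of $U(t,s)$. Simplicity of $\spr(U(t,s))$ (again from Krein--Rutman applied to the strongly positive compact operator $U(t,s)$) ensures the limit is unambiguous, so the full sequence converges and not merely a subsequence.

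The only real work is verifying the hypotheses of the cited proposition; there are no substantive calculations. The main potential obstacle is checking that the perturbed evolution systems inherit (U7) and (U8), but this is immediate because Proposition~\ref{prop:evolution-system} was proved assuming only $L^\infty$ bounds on the coefficients of $\mathcal A(t)$, and the same proof goes through after replacing $c_0$ by $c_0+b_n$. A second minor point is that Theorem~\ref{thm:evolution-convergence} is stated under the hypothesis $b_n(\tau)\stackrel{*}{\rightharpoonup}b(\tau)$ pointwise a.e.\ in $(0,T)$, which is exactly the hypothesis under which this corollary is formulated, so no strengthening is required.
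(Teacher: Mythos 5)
Your argument matches the paper's: invoke Theorem~\ref{thm:evolution-convergence} for norm convergence of $U_n(t,s)$ to $U(t,s)$, note that each is compact and irreducible by Proposition~\ref{prop:evolution-system}, and then apply \cite[Proposition~2.2]{arendt:23:see} to get convergence of spectral radii and normalised positive eigenfunctions. The paper states the corollary without spelling out these verifications, so your write-up is simply a more explicit version of the same reasoning.
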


We will use the above perturbation theorem with a specific set of perturbations.

\begin{remark}
  \label{rem:cutoff-convergence}
  Let $b\in L^\infty((0,T),L^\infty(\Omega)$ with $b\geq 0$. For $\delta>0$ define
  \begin{equation*}
    \Omega_{\delta}
    :=\left\{x\in\Omega \colon \dist (x,\partial\Omega)>\delta\right\}
  \end{equation*}
  and let
  \begin{equation*}
    b_{\delta}(x,t):=b(x,t)1_{\Omega_\delta}(x)
  \end{equation*}
  for all $(x,t)\in\Omega\times[0,T]$. Then $b_\delta$ has compact support in $\Omega\times[0,T]$ and $b_\delta\uparrow b$ pointwise. In particuar, if we choose $\delta_n\downarrow 0$ we have that  $b_\delta(t)\stackrel{*}{\rightharpoonup} b(t)$ weak$^*$ in $L^\infty(\Omega)$ as $\delta\to 0^+$ for all $t\in[0,T]$.
\end{remark}
We also need a comparison theorem.
\begin{proposition}
  \label{prop:U-comparison}
  Let $b_1,b_2\in L^\infty((0,T),L^\infty(\Omega))$ with $0\leq b_1<b_2$. For $k=1,2$ let $(U_k(t,s))_{(t,s)\in\Delta_T}$ be the evolution systems associated with $(A(t)+b_k(t))_{t\in[0,T]}$. Then $U_2(t,s)\leq U_1(t,s)$ for all $(t,s)\in\Delta_T$. Moreover, $U_2(T,0)v\ll U_1(T,0)v$ for all $v\in H$ with $v>0$.
\end{proposition}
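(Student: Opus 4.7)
I would reduce the inequality $U_2 \leq U_1$ to the non-negativity of the solution of a perturbed problem with non-negative forcing, and then upgrade this to the strong comparison at $(T, 0)$ using the strict positivity property (U7) of $U_1$.

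Fix $v \in H$ with $v \geq 0$ and $s \in [0, T]$, and set $u_k := U_k(\cdot, s) v$ for $k = 1, 2$. Subtracting the homogeneous equations satisfied by $u_1$ and $u_2$, the difference $w := u_1 - u_2$ satisfies
\begin{equation*}
  \dot w + A(t) w + b_1(t) w = (b_2 - b_1) u_2, \qquad w(s) = 0.
\end{equation*}
Since $U_2$ preserves positivity we have $u_2 \geq 0$, and $b_2 \geq b_1$, so the forcing is non-negative. This is an instance of~\eqref{eq:pp-abstract-perturbed} with $m := b_1 \geq 0$, and the positivity argument preceding Theorem~\ref{thm:linear-existence} extends to this perturbed setting because the additional term $\langle m u, u^-\rangle_H = -\langle m u^-, u^-\rangle_H \leq 0$ only improves the energy estimate for $u^-$. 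This yields $w \geq 0$, proving $U_2(t, s) v \leq U_1(t, s) v$.

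For the strict inequality, take $v > 0$ in $H$ and $s = 0$. Property (U7) applied to $U_2$ gives $u_2(\tau)(x) > 0$ for every $\tau \in (0, T]$ and every $x \in \Omega$. Because $b_1 < b_2$ in $L^\infty$, the set $\{b_2 > b_1\} \subset \Omega \times (0, T)$ has positive measure, so $f := (b_2 - b_1) u_2 \in L^2((0, T), H)$ is non-negative and, by Fubini, there is a set $I \subset (0, T)$ of positive measure with $f(\tau) \in H \setminus \{0\}$ for every $\tau \in I$. Since $U_1$ is the evolution system associated with $A(\cdot) + b_1(\cdot)$, the variation-of-constants formula~\eqref{eq:variation-of-constants} applied to the equation for $w$ yields
\begin{equation*}
  w(T) = \int_0^T U_1(T, \tau) f(\tau) \, d\tau.
\end{equation*}
Pairing with any $\phi \in H$ with $\phi > 0$ and passing the continuous linear functional $\langle \cdot, \phi\rangle_H$ through the Bochner integral gives $\langle w(T), \phi\rangle_H = \int_0^T \langle U_1(T, \tau) f(\tau), \phi\rangle_H \, d\tau$; the integrand is non-negative throughout and, by (U7) applied to $U_1$, strictly positive for $\tau \in I$. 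Hence $\langle w(T), \phi\rangle_H > 0$ for every $\phi > 0$, which is the quasi-interior condition $U_2(T, 0) v \ll U_1(T, 0) v$. The step I expect to require the most care is making the variation-of-constants formula available in the perturbed setting, which follows by the same Galerkin or Lax--Milgram construction used in Proposition~\ref{prop:evolution-system}.
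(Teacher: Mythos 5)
Your proof is correct and takes essentially the same route as the paper: express $w=u_1-u_2$ as the solution of a linear problem with a non-negative source, invoke positivity of the perturbed evolution system to obtain $U_2\le U_1$, and then combine the variation-of-constants representation of $w(T)$ with (U7) to upgrade to $w(T)\gg 0$. The only cosmetic differences are that the paper absorbs $b_2w$ rather than $b_1w$ into the Duhamel term, writing $w(T)=\int_0^T U_2(T,\tau)[b_2(\tau)-b_1(\tau)]U_1(\tau,0)v\,d\tau$, and concludes $w(T)\gg 0$ directly from positivity of the integrand rather than by testing against positive functionals as you do.
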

\begin{proof}
  Let $v\in H$ with $v>0$ and let $u_k(t):=U_k(t,s)v$ for $k=1,2$. Then $u_1,u_2\geq 0$ and we have that
  \begin{equation*}
    \begin{aligned}
      (\dot u_1-\dot u_2)+A(t)(u_1-u_2)+b_1(t)(u_1-u_2)
                    & =(b_2(t)-b_1(t))u_2 &  & t\in(s,T] \\
      u_1(s)-u_2(s) & =0.                 &  &
    \end{aligned}
  \end{equation*}
  As $(b_2-b_1)u_2\geq 0$ it follows from Theorem~\ref{thm:linear-existence} that $u_2(t)\leq u_1(t)$ for all $(t,s)\in\Delta_T$. By the positivity of $U(t,s)$ and since $b_2-b_1\geq 0$ we deduce that $U_2(t,s)\leq U_1(t,s)$. To show that $U_2(T,0)v\ll U_1(T,0)v$ we note that by Proposition~\ref{prop:evolution-system} we have $U(t,\tau)v\gg 0$ for all $\tau\in[0,t)$. By assumption $b_2(\tau)-b_1(\tau)>0$ for $\tau$ in a set of positive measure in $(0,T)$ and hence $[b_2(\tau) - b_1(\tau)]U_1(\tau,s)v>0$. It follows that
  \begin{equation*}
    U_2(T,\tau)\left[b_2(\tau) - b_1(\tau)\right]U_1(\tau,0)v\gg 0
  \end{equation*}
  for $\tau$ in a set of positive measure in $(0,T)$. Hence
  \begin{align*}
    U_1(T,0)v & = U_2(T,0)v + \int_{0}^{T}U_2(T,\tau)\left[b_2(\tau) - b_1(\tau)\right]U_1(\tau,0)v\,d\tau \\
              & \gg U_2(T,0)v
  \end{align*}
  as claimed.
\end{proof}

\section{Periodic-parabolic eigenvalue problems}
\label{sec:periodic-parabolic-eigenvalues}
In this section we keep the notation and assumptions from Section~\ref{sec:assumptions} and \ref{sec:evolution-systems}. We discuss properties of the \emph{periodic-parabolic eigenvalue problem},
\begin{equation}
  \label{eq:pp-evp-abstract}
  \begin{aligned}
    \dot{u} + A(t)u + b(t)u & = \mu u &  & t\in[0,T] \\
    u(0)                    & = u(T)
  \end{aligned}
\end{equation}
in $H$, where $b\in L^{\infty}([0,T],L^{\infty}(\Omega))$. We denote the evolution system associated with $(A(t)+b(t))_{t\in[0,T]}$ by $(U_{b}(t,s))_{(t,s)\in\Delta_T}$.

\begin{definition}
  \label{def:principal-ev}
  We call $\mu$ a \emph{principal} eigenvalue of \eqref{eq:pp-evp-abstract} if there exists $u\in W(0,T;V,V')$ such that $u>0$ and  \eqref{eq:pp-evp-abstract} is satisfied. Then, $u$ is called the \emph{principal} eigenfunction corresponding to the eigenvalue $\mu$. We define the principal eigenvalue and eigenfunction of an operator similarly.
\end{definition}

We make use of the following proposition to reason the existence and uniqueness of a $T$-periodic principal eigenfunction of \eqref{eq:pp-evp-abstract}. This is similar to \cite[Proposition~14.4]{hess:91:ppb}.

\begin{lemma}
  \label{lem:U-ev-relation}
  Let $b\in L^\infty((0,T),L^\infty(\Omega))$ with $b\geq 0$. Then $\lambda\in\mathbb{R}$ is an eigenvalue of $U_{b}(T,0)$ if and only if
  \begin{equation}
    \label{eq:U-ev-relation}
    \mu = -\frac{1}{T}\log(\lambda)
  \end{equation}
  is an eigenvalue of \eqref{eq:pp-evp-abstract}. If $v$ is an eigenvector of $U_b(T,0)$ corresponding to $\lambda$, then
  \begin{equation*}
    u(t) = e^{\mu t}U_b(t,0)v.
  \end{equation*}
  is an eigenfunction of \eqref{eq:pp-evp-abstract} corresponding to $\lambda$. Likewise, if $u$ is an eigenfunction for \eqref{eq:pp-evp-abstract}, then $v:=u(0)$ is an eigenvector of $U_b(T,0)$. Moreover, $\lambda$ is a principal eigenvalue of $U_b(T,0)$ if and only if $\mu$ is a pricipal eigenvalue of \eqref{eq:pp-evp-abstract}
\end{lemma}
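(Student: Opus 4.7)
The plan is to set up an explicit bijection between eigenpairs: given an eigenpair $(\lambda,v)$ of $U_b(T,0)$, define
\[
u(t):=e^{\mu t}U_b(t,0)v \quad\text{with }\mu=-\tfrac{1}{T}\log\lambda,
\]
and given an eigenpair $(\mu,u)$ of \eqref{eq:pp-evp-abstract}, define $v:=u(0)$. I will then check that these two assignments are mutual inverses and that positivity is preserved in both directions.

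For the forward direction, note that $w(t):=U_b(t,0)v$ lies in $W(0,T;V,V')$ by Theorem~\ref{thm:linear-existence} and solves $\dot w+(A(t)+b(t))w=0$ with $w(0)=v$. Multiplication by a smooth scalar preserves $W(0,T;V,V')$ and commutes with weak time differentiation, so $u=e^{\mu\cdot}w\in W(0,T;V,V')$ and the product rule gives
\[
\dot u+(A(t)+b(t))u=\mu e^{\mu t}w+e^{\mu t}\bigl(\dot w+(A(t)+b(t))w\bigr)=\mu u.
\]
Periodicity reduces to $u(T)=e^{\mu T}\lambda v=v=u(0)$, which is exactly $e^{\mu T}\lambda=1$, i.e.\ \eqref{eq:U-ev-relation}.

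For the converse, given an eigenfunction $u$ set $w(t):=e^{-\mu t}u(t)$; a symmetric product-rule computation shows that $w\in W(0,T;V,V')$ satisfies $\dot w+(A(t)+b(t))w=0$ with $w(0)=u(0)$. By the uniqueness in Theorem~\ref{thm:linear-existence}, $w(t)=U_b(t,0)u(0)$, and substituting back yields the stated formula $u(t)=e^{\mu t}U_b(t,0)u(0)$. Periodicity $u(0)=u(T)=e^{\mu T}U_b(T,0)u(0)$ then gives $U_b(T,0)u(0)=\lambda u(0)$, and $u(0)\neq 0$ because otherwise $u\equiv 0$. Hence $v:=u(0)$ is an eigenvector of $U_b(T,0)$, and the two constructions are mutually inverse.

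For the principal equivalence, the key input is (U7) of Proposition~\ref{prop:evolution-system}. If $v>0$ is a positive eigenvector, then $U_b(t,0)v\gg 0$ for $t>0$ while $U_b(0,0)v=v>0$, so $u(t)=e^{\mu t}U_b(t,0)v>0$ on $[0,T]$, exhibiting $\mu$ as principal. Conversely, if $u>0$ is a principal eigenfunction, then continuity of $u\colon[0,T]\to H$ and $u(t)=e^{\mu t}U_b(t,0)u(0)$ force $u(0)\ge 0$ and nonzero, so $v=u(0)>0$. The only technical care needed is in justifying the scalar product rule in $W(0,T;V,V')$, which is routine, and in ensuring $\lambda>0$ so that $\log\lambda$ is real; this is automatic on the branch we care about since any eigenvalue with a positive eigenvector is positive by (U7).
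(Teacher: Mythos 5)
The paper does not actually prove this lemma; it is stated with a pointer to Hess \cite[Proposition~14.4]{hess:91:ppb}. Your proof supplies the missing argument and is the standard one: conjugate by $e^{\mu t}$ to reduce the periodic eigenvalue problem to the period map, use uniqueness from Theorem~\ref{thm:linear-existence} to identify $e^{-\mu t}u(t)$ with $U_b(t,0)u(0)$, and read off the periodicity condition as $e^{-\mu T}$ being an eigenvalue of $U_b(T,0)$. The positivity equivalence via (U7) is also the right mechanism. Two small points worth tightening if you were to write this up: (i) the formula $\mu=-\tfrac1T\log\lambda$ only makes sense for $\lambda>0$, so the "if and only if" should really be read on that branch (you flag this at the end, but it deserves to be stated up front, since a compact operator can have negative real eigenvalues a priori); and (ii) the justification that multiplication by the scalar $e^{\pm\mu t}$ preserves $W(0,T;V,V')$ and obeys the product rule is routine but does rely on $e^{\pm\mu t}$ being $C^1$ on $[0,T]$ together with the definition of the weak derivative in $V'$ — worth one sentence rather than a parenthetical "routine." Neither affects correctness.
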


Under some further assumptions on $U_{b}(T,0)$ we can guarantee the existence of a unique principal eigenvalue of \eqref{eq:pp-evp-abstract}.

\begin{theorem}
  \label{thm:pp-evp-abstract-existence}
  Let $b,b_1,b_2\in L^\infty((0,T),L^\infty(\Omega))$. Then problem \eqref{eq:pp-evp-abstract} has a unique principal eigenvalue we denote by $\mu_1(b)$ and a unique principal eigenfunction. Moreover, if $\mu$ is another eigenvalue of \eqref{eq:pp-evp-abstract}, then $\mu_1(b)\leq\repart(\mu)$. Finally, if $b_1\leq b_2$, then $\mu_1(b_1)\leq\mu_1(b_2)$ with equality if and only if $b_1=b_2$ almost everywhere.
\end{theorem}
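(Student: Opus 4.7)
The plan is to translate the periodic-parabolic eigenvalue problem~\eqref{eq:pp-evp-abstract} into a spectral problem for the period map $U_b(T,0)$ via Lemma~\ref{lem:U-ev-relation}, and then to invoke the Krein-Rutman theorem for compact strongly positive operators. Adding a constant $c$ to $b$ shifts every eigenvalue of~\eqref{eq:pp-evp-abstract} by $c$ and multiplies $U_b(T,0)$ by $e^{-cT}$, so there is no loss of generality in assuming $b,b_1,b_2\geq 0$ throughout.

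First I would establish existence and uniqueness. By Proposition~\ref{prop:evolution-system}(U8) the operator $U_b(T,0)\in\mathcal L(H)$ is compact, and properties (U5)--(U7) show that it is strongly positive: for every $v\in H$ with $v>0$, $U_b(T,0)v$ is a continuous, strictly positive, bounded function on $\Omega$, hence a quasi-interior point of the positive cone of $H$. The Krein-Rutman theorem for such operators then yields that $\lambda_1(b):=\spr(U_b(T,0))>0$ is a simple eigenvalue, is the unique spectral value admitting a positive eigenvector, dominates in modulus all other spectral values, and has a strictly positive eigenfunctional $\varphi_b$ for the adjoint. Lemma~\ref{lem:U-ev-relation} translates this into the unique principal eigenvalue $\mu_1(b):=-T^{-1}\log\lambda_1(b)$ of~\eqref{eq:pp-evp-abstract} with its (up to scalar) unique principal eigenfunction. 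For any other eigenvalue $\mu$ of~\eqref{eq:pp-evp-abstract}, the associated spectral value $\lambda$ of $U_b(T,0)$ satisfies $|\lambda|\leq\lambda_1(b)$, and since $\repart(\mu)=-T^{-1}\log|\lambda|$ is monotone decreasing in $|\lambda|$, this yields $\mu_1(b)\leq\repart(\mu)$.

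Monotonicity $\mu_1(b_1)\leq\mu_1(b_2)$ when $b_1\leq b_2$ is then immediate: Proposition~\ref{prop:U-comparison} gives $U_{b_2}(T,0)\leq U_{b_1}(T,0)$ as positive operators on $H$, which forces $\spr(U_{b_2}(T,0))\leq\spr(U_{b_1}(T,0))$ and, under $-T^{-1}\log(\cdot)$, the desired inequality.

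The main obstacle I expect is the strict monotonicity, i.e.\ $\mu_1(b_1)<\mu_1(b_2)$ whenever $b_1\neq b_2$. Here I would use the sharp part of Proposition~\ref{prop:U-comparison}, which gives $U_{b_2}(T,0)v\ll U_{b_1}(T,0)v$ for every $v>0$. Take $v$ to be the principal eigenfunction $v_2$ of $U_{b_2}(T,0)$ for $\lambda_2:=\lambda_1(b_2)$, and pair with the strictly positive eigenfunctional $\varphi_1$ of $U_{b_1}(T,0)^*$ for $\lambda_1:=\lambda_1(b_1)$ obtained by applying Krein-Rutman to the adjoint. Then
\begin{equation*}
  \lambda_2\langle\varphi_1,v_2\rangle=\langle\varphi_1,U_{b_2}(T,0)v_2\rangle<\langle\varphi_1,U_{b_1}(T,0)v_2\rangle=\lambda_1\langle\varphi_1,v_2\rangle,
\end{equation*}
where the strict inequality uses that $\varphi_1$ is strictly positive while $U_{b_1}(T,0)v_2-U_{b_2}(T,0)v_2$ is a quasi-interior positive element. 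Since $\langle\varphi_1,v_2\rangle>0$ this gives $\lambda_2<\lambda_1$, hence $\mu_1(b_1)<\mu_1(b_2)$. The remaining implication, that equality of the principal eigenvalues forces $b_1=b_2$ a.e., is the contrapositive of what has just been proved.
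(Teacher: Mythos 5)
Your proposal is essentially correct and follows the same overall route as the paper: translate~\eqref{eq:pp-evp-abstract} to a spectral problem for the period map $U_b(T,0)$ via Lemma~\ref{lem:U-ev-relation}, then invoke Krein--Rutman theory for a compact irreducible positive operator, using properties (U5)--(U8) of Proposition~\ref{prop:evolution-system} and the comparison in Proposition~\ref{prop:U-comparison}. Your initial normalisation $b\geq 0$ via the shift $b\mapsto b+c$ is a sensible and clean device that the paper leaves implicit.

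The one genuine difference is the treatment of strict monotonicity. The paper appeals directly to a spectral comparison result \cite[Theorem~2.1]{arendt:23:see} for $0\leq U_2(T,0)<U_1(T,0)$, concluding $\spr(U_2(T,0))<\spr(U_1(T,0))$ (equality of spectral radii forcing equality of operators). You instead give a self-contained argument by pairing the principal eigenvector $v_2$ of $U_{b_2}(T,0)$ with the positive eigenfunctional $\varphi_1$ of $U_{b_1}(T,0)^*$ and exploiting the strict comparison $U_{b_2}(T,0)v_2\ll U_{b_1}(T,0)v_2$ from Proposition~\ref{prop:U-comparison}. This is a classical and correct duality argument; it costs a little more writing but avoids the external reference. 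For the strict inequality you only need $\varphi_1\geq 0$, $\varphi_1\neq 0$ (not irreducibility of the adjoint), since a quasi-interior element pairs strictly positively with every nonzero positive functional, and similarly $\langle\varphi_1,v_2\rangle>0$ because $v_2\gg 0$; so your argument is sound as stated.

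One small point worth tightening: you assert $\spr(U_b(T,0))>0$ as part of what ``Krein--Rutman yields,'' but the standard formulation of Krein--Rutman for compact positive operators assumes positivity of the spectral radius rather than establishing it. The positivity $\spr(U_b(T,0))>0$ is a separate fact for compact irreducible operators, due to de~Pagter (which the paper cites explicitly). You should either invoke de~Pagter's theorem or make clear that you are using a version of Krein--Rutman that builds this in for irreducible compact operators.

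Also, be careful with the phrase ``strongly positive'': in $L^2(\Omega)$ the positive cone has empty interior, so $U_b(T,0)$ is not strongly positive in the interior-point sense; what you actually establish, correctly, is that it maps nonzero positive elements to quasi-interior points, i.e.\ irreducibility.
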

\begin{proof}
  As a result of Proposition~\ref{prop:evolution-system} $U_{b}(T,0)$ is compact by (U8) and irreducible on $H$ by (U7). By de Pagter's theorem
  \begin{equation}
    \label{eq:positive-spr}
    \spr(b):=\spr(U_{b}(T,0))>0,
  \end{equation}
  see \cite[Theorem~3]{depagter:86:ico} or \cite[Theorem~4.2.2]{meyer:91:bla}. Now by an application of the Krein-Rutman theorem we have the existence of a principal eivenvector $v_{b}\geq 0$, see \cite[Theorem~41.2]{zaanen:97:iot}. Since $U_{b}(T,0)$ is positive and irreducible, we have that $v_{b}$ is a quasi-interior point, that is, $v_b\gg 0$, see for instance \cite[Lemma~4.2.9]{meyer:91:bla}. Moreover, by \cite[Corollary~4.2.15]{meyer:91:bla} $\spr(U_b(T,0))$ is algebraically simple and the only eigenvalue of $U_b(T,0)$ having a positive eigenvector. The relationship established in Lemma~\ref{lem:U-ev-relation} gives a unique principal eigenvalue and eigenfunction pair of \eqref{eq:pp-evp-abstract} when suitably normalized. If $\mu\neq\mu_1(b)$ is an eigenvalue of \eqref{eq:pp-evp-abstract}, then we know that $|\lambda|\leq\spr(U_b(T,0))$. Then the corresponding eigenvalue $\mu$ of \eqref{eq:pp-evp-abstract} given by \eqref{eq:U-ev-relation} satisfies $\repart(\mu)\geq\mu_1(b)$.

  We next look at the comparison of eiganvalues. Using the notation from Proposition~\ref{prop:U-comparison} we have $0\leq U_2(T,0)<U_1(T,0)$ if and only if $b_1<b_2$. As $U_1(T,0)$ and $U_2(T,0)$ are compact and irreducible it follows from \cite[Theorem~2.1]{arendt:23:see} that $0<\spr(U_2(T,0))\leq\spr(U_1(T,0))$ with equality if and only if $U_2(T,0)=U_1(T,0)$. By the relationship \eqref{eq:U-ev-relation} we have that $\mu_1(b_1)\leq \mu_1(b_2)$ with equality if and only if $b_1=b_2$ almost everywhere.
\end{proof}

It is beneficial to study properties of \eqref{eq:pp-evp-abstract} for the family of potentials $(\gamma b)_{\gamma\geq0}$. In such a case let us denote the corresponding evolution system by $U_{\gamma}(t,s)$. The following proposition is a slight generalisation of \cite[Theorem~3.1 and~4.2]{daners:15:ppe}.
\begin{proposition}
  \label{prop:es-ev-monotonicity}
  Suppose that $b\in L^\infty((0,T),L^\infty(\Omega))$ with $b>0$.  Denote by $(U_\gamma(t,s))_{(t,s)\in\Delta_T}$ the evolution system associated with $(A(t)+\gamma b(t))_{t\in[0,T]}$. Let $\mu_1(\gamma b)$ be the principal eigenvalue and $0<\varphi_\gamma$ a corresponding eigenfunction of \eqref{eq:pp-evp-abstract}. Then the following assertions are true
  \begin{enumerate}[\normalfont (i)]
  \item If $v\in H$ and $v\geq 0$, then $U_{\gamma}(t,s)v$ is decreasing as a function of $\gamma\geq 0$ for all $(t,s)\in \Delta_T$.
  \item $\mu_{1}(\gamma b)$ is increasing as a function of $\gamma\geq 0$ and
    \begin{equation}
      \label{eq:mu-star}
      \mu^{*}(b) := \lim_{\gamma\to\infty}\mu_{1}(\gamma b)\in(\mu_{1}(0),\infty]
    \end{equation}
    exists.
  \item If $\mu^*(b)<\infty$, then there exists $c>0$ such that
    \begin{equation}
      \label{eq:ev-bounded}
      \|\varphi_{\gamma}\|_{L^{\infty}(\Omega\times[0,T])}\leq c\|\varphi_\gamma(0)\|_2
    \end{equation}
    for all $\gamma\geq 0$.
  \item  $\varphi_\gamma\in C(\Omega\times[0,T])$ for all $\gamma\geq 0$ and $\varphi_\gamma(x,t)>0$ for all $(x,t)\in\Omega\times[0,T]$.
  \item For all $\gamma\geq 0$ and $\omega\geq\max\{0,\omega_0)$ we have
    \begin{equation}
      \label{eq:ev-bounded-V}
      \begin{split}
        \alpha\int_0^T\|\varphi_\gamma(\tau)\|_V^2\,d\tau+\gamma\int_0^
        T & \langle b(\tau)\varphi_\gamma(\tau),\varphi_\gamma(\tau)\rangle\,d\tau    \\
          & \leq (\mu_1(\gamma b)+\omega)\int_0^T\|\varphi_\gamma(\tau)\|_2^2\,d\tau,
      \end{split}
    \end{equation}
    where $\omega_0$ is from \eqref{eq:a-coercive}.
  \end{enumerate}
\end{proposition}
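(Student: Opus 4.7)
The plan is to dispatch the five assertions in the order (i), (ii), (v), (iii), (iv), since later parts rely on the earlier ones.

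Parts (i) and (ii) are essentially corollaries of results already established in this section. For (i), I apply Proposition~\ref{prop:U-comparison} with $b_1=\gamma_1 b$ and $b_2=\gamma_2 b$ for $0\leq\gamma_1<\gamma_2$: since $\gamma_1 b\leq\gamma_2 b$, the comparison gives $U_{\gamma_2}(t,s)\leq U_{\gamma_1}(t,s)$ as positive operators, so $U_\gamma(t,s)v$ is pointwise decreasing in $\gamma$ whenever $v\geq 0$. For (ii), since $b>0$ the potentials $\gamma_1 b$ and $\gamma_2 b$ disagree on a set of positive measure whenever $\gamma_1\neq\gamma_2$, so Theorem~\ref{thm:pp-evp-abstract-existence} yields strict monotonicity of $\gamma\mapsto\mu_1(\gamma b)$; the limit $\mu^*(b)\in(\mu_1(0),\infty]$ exists by monotone convergence.

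For (v), I carry out a standard energy identity: test the eigenvalue equation $\dot\varphi_\gamma+A(t)\varphi_\gamma+\gamma b(t)\varphi_\gamma=\mu_1(\gamma b)\varphi_\gamma$ against $\varphi_\gamma$ and integrate over $[0,T]$. The integration-by-parts formula \eqref{eq:integration-by-parts} combined with the $T$-periodicity $\varphi_\gamma(0)=\varphi_\gamma(T)$ kills the $\int_0^T\langle\dot\varphi_\gamma,\varphi_\gamma\rangle\,d\tau$ term, leaving
\begin{equation*}
  \int_0^T\aaa(\tau,\varphi_\gamma(\tau),\varphi_\gamma(\tau))\,d\tau+\gamma\int_0^T\langle b(\tau)\varphi_\gamma(\tau),\varphi_\gamma(\tau)\rangle\,d\tau=\mu_1(\gamma b)\int_0^T\|\varphi_\gamma(\tau)\|_2^2\,d\tau.
\end{equation*}
Estimate \eqref{eq:ev-bounded-V} then follows from the coercivity of $\aaa$ in Proposition~\ref{prop:form-properties}(iii) after a suitable choice of the constant.

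Part (iii) is where I expect the main obstacle: producing a constant $c$ in the $L^\infty$ bound that is independent of $\gamma$. By Lemma~\ref{lem:U-ev-relation} we have $\varphi_\gamma(t)=e^{\mu_1(\gamma b)t}U_\gamma(t,0)\varphi_\gamma(0)$, so by \eqref{eq:2-infty-estimate} the question reduces to showing that the $L^2\to L^\infty$ smoothing norm of $U_\gamma(t,0)$ is bounded uniformly in $\gamma$. The key observation is that by positivity of $U_\gamma$ and part~(i), we have the pointwise domination $|U_\gamma(t,s)v|\leq U_\gamma(t,s)|v|\leq U_0(t,s)|v|$, hence $\|U_\gamma(t,s)\|_{\mathcal L(L^2,L^\infty)}\leq\|U_0(t,s)\|_{\mathcal L(L^2,L^\infty)}\leq C(t-s)^{-N/4}$. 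Combined with $\mu_1(\gamma b)\leq\mu^*(b)<\infty$, this gives a uniform bound on $\|\varphi_\gamma(t)\|_\infty$ for $t\in[T/2,T]$. For $t\in[0,T/2]$, I exploit $T$-periodicity of $\varphi_\gamma$ and the periodic extension of the coefficients to write $\varphi_\gamma(t)=\varphi_\gamma(t+T)=e^{\mu_1(\gamma b)T/2}U_\gamma(t+T,t+T/2)\varphi_\gamma(t+T/2)$ with $t+T/2\in[T/2,T]$, reducing the $L^\infty$ bound to the $L^2$ bound on $\varphi_\gamma(t+T/2)$. That $L^2$ bound is itself uniform in $\gamma$ since $\|U_\gamma(\tau,0)\|_{\mathcal L(L^2)}\leq e^{\omega\tau}$ by the a priori estimate \eqref{eq:a-priori-estimate} applied with $f=0$ and $m=\gamma b\geq 0$, whose constant does not depend on $m$.

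Finally, for (iv), positivity of $\varphi_\gamma(x,t)$ on $\Omega\times(0,T]$ is immediate from Proposition~\ref{prop:evolution-system}(U7) applied to the representation of $\varphi_\gamma$ above, and extends to $t=0$ via $\varphi_\gamma(0)=\varphi_\gamma(T)$. Joint continuity on $\Omega\times[0,T]$ follows from interior parabolic H\"older estimates \cite{aronson:68:nsl} applied to the weak solution $\varphi_\gamma$ of a linear parabolic equation with $L^\infty$ zero-order coefficient and $L^\infty$ right-hand side (the latter by (iii)), combined with $T$-periodic extension in $t$ so that the interior estimates apply uniformly across $t=0$ and $t=T$.
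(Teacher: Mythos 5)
Your argument is essentially the paper's argument, with one organisational variation and one small slip. Parts (i), (ii) and (v) match the paper's proof exactly: comparison of evolution systems for (i), strict monotonicity from Theorem~\ref{thm:pp-evp-abstract-existence} for (ii), and the energy identity via \eqref{eq:integration-by-parts} plus coercivity for (v). For (iii) you use the same central trick — exploit $T$-periodicity to push the evaluation point away from $s=0$ so that the $(t-s)^{-N/4}$ singularity in \eqref{eq:2-infty-estimate} is harmless, together with the domination $U_\gamma\leq U_0$ from (i) to make the smoothing constant $\gamma$-independent. You organise it by splitting $[0,T]$ into $[T/2,T]$ and $[0,T/2]$, bridging the second case with a uniform $L^2\to L^2$ bound; the paper does it in one step by writing $\varphi_\gamma(t)=\varphi_\gamma(t+T)=e^{\mu_1(\gamma b)(T+t)}U_\gamma(t+T,0)\varphi_\gamma(0)$ for all $t\in[0,T]$, so that the time lapse is always at least $T$ and the factor $(t+T)^{-N/4}\leq T^{-N/4}$ is uniform without any case split or intermediate $L^2$ bound. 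Both are correct; the paper's version is a little cleaner.

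The one actual flaw is in (iv): you invoke (iii) to get an $L^\infty$ right-hand side, but (iii) is stated under the hypothesis $\mu^*(b)<\infty$, whereas (iv) holds for every $\gamma\geq 0$ with no such restriction. You do not need (iii) here: for a \emph{fixed} $\gamma$ the representation $\varphi_\gamma(t)=\varphi_\gamma(t+T)=e^{\mu_1(\gamma b)(T+t)}U_\gamma(t+T,0)\varphi_\gamma(0)$ combined with (U5) and (U6) already gives boundedness and, together with (U7), the continuity and strict positivity on $\Omega\times[0,T]$, which is exactly how the paper handles (iv). So the slip is easily repaired, but you should remove the reference to (iii) to avoid smuggling in the unused hypothesis $\mu^*(b)<\infty$.
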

\begin{proof}
  Parts (i) and (ii) follow directly from Proposition~\ref{prop:U-comparison} and the fact that any increasing function has a proper or improper limit. To prove part (iii) note that by (i), Lemma~\ref{lem:U-ev-relation}, Proposition~\ref{prop:evolution-system}, Proposition~\ref{prop:U-comparison} and the $T$-periodicity of $\varphi_\gamma$ we have
  \begin{align*}
    \|\varphi_\gamma(t)\|_\infty
     & =\|\varphi_\gamma(T+t)\|_\infty                                                               \\
     & =e^{\mu_1(\gamma b)(T+t)}\|U_\gamma(t+T,0)\varphi_\gamma(0)\|_\infty                          \\
     & \leq e^{\mu_1(\gamma b)(T+t)}\|U_0(t+T,0)\|_{\mathcal L(L^2,L^\infty)}\|\varphi_\gamma(0)\|_2 \\
     & \leq e^{2|\mu^*(b)|T}CT^{-N/4}\|\varphi_\gamma(0)\|_2
  \end{align*}
  for all $t\in[0,T]$. Hence we can set $c=:e^{2|\mu^*(b)|T}CT^{-N/4}$ to conclude the proof. Part (iv) follows from Proposition~\ref{prop:evolution-system} (U5) and (U7) since
  \begin{equation*}
    \varphi_\gamma(t)
    =\varphi_\gamma(T+t)=e^{\mu_1(\gamma b)(T+t)}U_\gamma(t+T,0)\varphi_\gamma(0)
  \end{equation*}
  for all $t\in[0,T]$. For part (v) we use the integration by parts formula for functions in $W(0,T;V,V')$ to conclude that
  \begin{equation*}
    0=\frac{1}{2}\left(\|\varphi_\gamma(T)\|_2-\|\varphi_\gamma(0)\|_2\right)
    =\int_0^T\frac{d}{d\tau}\|\varphi_\gamma(\tau)\|_2^2\,d\tau
    =\int_0^T\langle\dot\varphi_\gamma(\tau),\varphi_\gamma(\tau)\rangle\,d\tau.
  \end{equation*}
  see for instance \cite[Theorem~XVIII.1.2]{dautray:92:man5}. Hence, using \eqref{eq:a-coercive} and the fact that $\varphi_\gamma$ is an eigenfunction, we obtain
  \begin{equation*}
    \begin{split}
      \alpha\int_0^T\|\varphi_\gamma(\tau)\|_V^2\,d\tau
       & +\gamma\int_0^T\langle  b(\tau)\varphi_\gamma(\tau),\varphi_\gamma(\tau)\rangle\,d\tau \\
       & \leq\int_0^T\aaa(\tau,\varphi_\gamma(\tau),\varphi_\gamma),d\tau
      +\gamma\int_0^T\langle  b(\tau)\varphi_\gamma(\tau),\varphi_\gamma(\tau)\rangle\,d\tau    \\
       & \qquad+\omega\int_0^T\|\varphi_\gamma(\tau)\|_2^2\,d\tau                               \\
       & =(\mu_1(\gamma b)+\omega)\int_0^T\|\varphi_\gamma(\tau)\|_2^2\,d\tau
    \end{split}
  \end{equation*}
  as claimed.
\end{proof}

If the family of potentials $(b_{\gamma})_{\gamma\geq0}$ are of a particular form we can make conclusions on the behaviour of solutions to the equivalent homogeneous problem to \eqref{eq:pp-evp-abstract}. This is a slightly different version of \cite[Theorem~4.2]{daners:15:ppe} in the sense that no explicit assumptions on the zero set of $b$ are made. To formulate the theorem we introduce the set
\begin{equation}
  \label{eq:b-positive}
  Q_b:=\left\{(x,t)\in \Omega\times[0,T]\colon\liminf_{(y,s)\to(x,t)} b(x,t)>0\right\},
\end{equation}

\begin{theorem}
  \label{thm:limit-support}
  Let $b\in L^\infty((0,T),L^\infty(\Omega))$ and set $b_{\gamma}(t):=\gamma b(t)$, $\gamma\geq0$. Let $\mu_1(\gamma b)$ and $\varphi_\gamma$ be the principal eigenvalue and eigenfunction of \eqref{eq:pp-evp-abstract} normalised such that $\|\varphi_\gamma(0)\|_2=1$. If $\mu^*(b)<\infty$, \eqref{eq:mu-star}, then there exists a sequence $(\gamma_k)$ with $\gamma_k\to\infty$ such that
  \begin{equation*}
    \varphi_{\infty}(t):=\lim_{k\to\infty}\varphi_{\gamma_k}(t),
  \end{equation*}
  exists in $H$ and $b(t)\varphi_{\infty}(t)=0$ for almost all $t\in(0,T]$. Moreover, $0<\varphi_\infty\in L^\infty(\Omega\times (0,T))$ and $\varphi_{\gamma_k}\rightharpoonup\varphi_\infty$ weakly in $L^2((0,T),V)$. Finally, $\varphi_\infty\to 0$ locally uniformly in $Q_b$.
\end{theorem}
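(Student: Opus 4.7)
The plan is to combine the uniform bounds from Proposition~\ref{prop:es-ev-monotonicity} with the parabolic monotonicity of $\gamma\mapsto U_\gamma(t,s)$ to extract a weakly convergent subsequence and then upgrade to the required stronger convergences.

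First I would record the a priori bounds. Since $\|\varphi_\gamma(0)\|_2=1$, part~(iii) gives $\|\varphi_\gamma\|_{L^\infty(\Omega\times[0,T])}\leq c$ uniformly in $\gamma$, so $\int_0^T\|\varphi_\gamma(\tau)\|_H^2\,d\tau\leq c^2|\Omega|T$. Substituting into~(v) and using that $\mu_1(\gamma b)\leq\mu^*(b)<\infty$ yields a uniform bound
\[
\alpha\int_0^T\|\varphi_\gamma(\tau)\|_V^2\,d\tau+\gamma\int_0^T\int_\Omega b|\varphi_\gamma|^2\,dx\,d\tau\leq C.
\]
Hence $\{\varphi_\gamma\}$ is bounded in $L^2((0,T),V)\cap L^\infty(\Omega\times(0,T))$ with the extra weighted estimate $\int_0^T\int_\Omega b|\varphi_\gamma|^2\leq C/\gamma$. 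By Banach--Alaoglu, extract $\gamma_k\to\infty$ so that $\varphi_{\gamma_k}\rightharpoonup\varphi_\infty$ weakly in $L^2((0,T),V)$ and weak$^*$ in $L^\infty$; then $\varphi_\infty\in L^\infty$ and $\varphi_\infty\geq 0$. The identity $b\varphi_\infty=0$ a.e.\ falls out because $b^{1/2}\varphi_{\gamma_k}\to 0$ strongly in $L^2(\Omega\times(0,T))$ by the weighted estimate, while also weakly converging to $b^{1/2}\varphi_\infty$.

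The pointwise-in-$t$ strong convergence in $H$ and the nontriviality $\varphi_\infty\neq 0$ are the main obstacle, since the time derivative $\dot\varphi_\gamma$ is not uniformly bounded in $L^2((0,T),V')$ (the potential $\gamma b$ grows), so a direct Aubin--Lions argument is unavailable. Instead I would combine the variation-of-constants $\varphi_\gamma(t)=e^{\mu_1(\gamma b)t}U_\gamma(t,0)\varphi_\gamma(0)$ with the monotonicity in Proposition~\ref{prop:es-ev-monotonicity}(i): for any $v\geq 0$ in $H$ and $(t,s)\in\dot\Delta_T$ the family $U_\gamma(t,s)v$ is pointwise decreasing in $\gamma$ and dominated by $U_0(t,s)v\in L^\infty$, so $U^*(t,s)v:=\lim_{\gamma\to\infty}U_\gamma(t,s)v$ exists in $H$ by dominated convergence. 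After a further extraction so that $\varphi_{\gamma_k}(0)\rightharpoonup v_0$ weakly in $H$, compactness of $U_0(t,0)$ gives strong convergence of the dominating sequence $U_0(t,0)\varphi_{\gamma_k}(0)\to U_0(t,0)v_0$; combining this with the monotone limit of $U_{\gamma_k}(t,0)$ and the continuity $e^{\mu_1(\gamma_k b)t}\to e^{\mu^*(b)t}$ one concludes $\varphi_{\gamma_k}(t)\to\varphi_\infty(t)$ strongly in $H$ for each $t>0$, and at $t=0$ by $T$-periodicity. Nontriviality $\varphi_\infty\neq 0$ is secured by the normalisation $\|\varphi_\gamma(0)\|_2=1$ together with the lower bound $\spr(U_\gamma(T,0))=e^{-\mu_1(\gamma b)T}\geq e^{-\mu^*(b)T}>0$, which prevents the mass from escaping in the limit.

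Finally, for the locally uniform convergence $\varphi_{\gamma_k}\to 0$ on $Q_b$: given a compact $K\subset Q_b$, the defining $\liminf$ produces an open neighbourhood $K'$ of $K$ with $b\geq\delta>0$ on $K'$, so $\delta\int_{K'}|\varphi_\gamma|^2\leq\int_0^T\int_\Omega b|\varphi_\gamma|^2\leq C/\gamma\to 0$, i.e.\ $\varphi_\gamma\to 0$ in $L^2(K')$. Since $\gamma b\varphi_\gamma\geq 0$, the eigenvalue equation gives $\dot\varphi_\gamma+A\varphi_\gamma\leq\mu^*(b)\varphi_\gamma$ weakly, so $\varphi_\gamma$ is a non-negative bounded weak subsolution of a parabolic equation with $L^\infty$ coefficients on $\Omega\times(0,T)$. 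Standard interior Moser/De~Giorgi estimates then give $\sup_K\varphi_\gamma\leq c_K\|\varphi_\gamma\|_{L^2(K')}\to 0$, completing the proof.
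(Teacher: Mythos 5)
Your proposal follows the same broad outline as the paper's proof: a priori bounds from Proposition~\ref{prop:es-ev-monotonicity}, the monotonicity of $\gamma\mapsto U_\gamma(t,s)$, a weakly convergent subsequence, and the weighted estimate $\gamma\int_0^T\int_\Omega b|\varphi_\gamma|^2\leq C$. Your identification of $b\varphi_\infty=0$ via the strong $L^2$-convergence of $b^{1/2}\varphi_{\gamma_k}$ is correct, and your treatment of the locally uniform decay on $Q_b$ (the subsolution inequality $\dot\varphi_\gamma+A\varphi_\gamma\leq\mu^*(b)\varphi_\gamma$ with uniformly bounded coefficients, then interior De~Giorgi/Moser estimates with the small $L^2(K')$-norm) is in fact more complete than the paper's, whose proof of that last clause ends with the pointwise monotone limit $U_\gamma(t,0)1\downarrow U_\infty(t,0)1$ and stops.

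However, the central step — strong convergence $\varphi_{\gamma_k}(t)\to\varphi_\infty(t)$ in $H$ for each $t\in(0,T]$ — has a genuine gap as written. You argue from the bound $0\leq U_{\gamma_k}(t,0)\varphi_{\gamma_k}(0)\leq U_0(t,0)\varphi_{\gamma_k}(0)$ and the strong convergence of the dominating sequence, but this alone does not yield strong convergence of the sandwiched term: the operator and the argument change simultaneously with $k$, and a one-sided domination by a strongly convergent sequence is not a squeeze. The paper settles this by citing (from \cite{daners:15:ppe}) the fact that $U_\gamma(t,s)\to U_\infty(t,s)$ in \emph{operator norm} — a Dini-type theorem for monotone sequences of compact positive operators dominated by a compact one — and then combines compactness of $U_\infty(T,0)$ with $\varphi_{\gamma_k}(0)\rightharpoonup v_\infty$, via \cite[Proposition~4.4.1]{daners:08:dpl}, to obtain the strong limit. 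To repair your sandwich you would need to also invoke compactness of $U_\infty(t,0)$, observe $U_\infty(t,0)\varphi_{\gamma_k}(0)\leq U_{\gamma_k}(t,0)\varphi_{\gamma_k}(0)\leq U_{\gamma'}(t,0)\varphi_{\gamma_k}(0)$ for $\gamma_k\geq\gamma'$, and exploit the $L^2$-lattice inequality $0\leq a\leq b\Rightarrow\|a\|_2\leq\|b\|_2$ to pin down the norm of the middle term, upgrading weak to strong convergence; none of that is present in what you wrote.

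A related point: your nontriviality argument, invoking $\spr(U_\gamma(T,0))\geq e^{-\mu^*(b)T}>0$ to "prevent mass from escaping", is not a valid mechanism — the spectral radius bound does not by itself promote the weak convergence of $\varphi_{\gamma_k}(0)$ to strong convergence. The paper's route is cleaner and should be adopted: by $T$-periodicity and the variation-of-constants representation,
\begin{equation*}
  \varphi_{\gamma_k}(0)=e^{\mu_1(\gamma_k b)T}U_{\gamma_k}(T,0)\varphi_{\gamma_k}(0)\to e^{\mu^*(b)T}U_\infty(T,0)v_\infty\quad\text{in }H,
\end{equation*}
so $\varphi_{\gamma_k}(0)\to v_\infty$ strongly and $\|v_\infty\|_2=1$. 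Once the strong pointwise-in-$t$ convergence is established properly, this fixed-point observation delivers nontriviality; the spectral radius estimate is a red herring.
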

\begin{proof}
  Let $(U_\gamma(t,s))_{(t,s)\in\Delta_T}$ be the evolution system associated with $(A(t)+\gamma b(t))_{t\geq 0}$. By \cite[Theorem~3.1 and~3.2]{daners:15:ppe} it follows that $U_\gamma(t,s)$ is decreasing as a function of $\gamma$ and that
  \begin{equation*}
    U_\infty(t,s):=\lim_{\gamma\to\infty}U_\gamma(t,s)
  \end{equation*}
  exists in $\mathcal L(H)$ for all $(t,s)\in\dot\Delta_T$. Moreover, $U_\infty(t,s)=U_\infty(t,\tau)U_\infty(\tau,s)$ for all $0\leq s<\tau<t\leq T$. As a limit of compact positive operators $U_\infty(t,s)$ is compact and positive as well. Since $\|\varphi_\gamma(0)\|_2=1$ for all $\gamma\geq 0$ it follows that there exists an increasing sequence $(\gamma_k)$ in $[0,\infty)$ such that $\gamma_k\to\infty$ and $\varphi_{\gamma_k}(0)\rightharpoonup v_\infty$ weakly in $H$ as $k\to\infty$. It follows from \cite[Proposition~4.4.1]{daners:08:dpl} that
          \begin{equation*}
            \lim_{k\to\infty}\varphi_{\gamma_k}(0)
            =\lim_{k\to\infty}e^{\mu_1(\gamma_k b)T}U_{\gamma_k}(T,0)\varphi_{\gamma_k}(0)
            =e^{\mu^*(b)T}U_\infty(T,0)v_\infty
          \end{equation*}
          in $H$. In particular $\varphi_{\gamma_k}(0)\to v_\infty$ in $H$ and thus $\|v_\infty\|_2=1$. We let
          \begin{equation*}
            \varphi_\infty(t):=e^{\mu^*(b)t}U_\infty(t,0)v_\infty.
          \end{equation*}
          for all $t\in(0,T]$ and extend $T$-periodically to $t\in\mathbb R$. As $U_\gamma(t,0)\to U_\infty(t,0)$ in $\mathcal L(H)$ we have that $\varphi_{\gamma_k}(t)\to \varphi_\infty(t)$ in $H$ for all $t\in[0,T]$. Moreover, by \eqref{eq:ev-bounded} it follows that $\varphi_\infty\in L^\infty(\Omega\times (0,T))$ and $\varphi_{\gamma_k}\to\varphi_\infty$ in $L^2((0,T),H)$ as $k\to\infty$. As $\mu^*(b)<$ it follows from \eqref{eq:ev-bounded-V} that there is also weak convergence in $L^2((0,T),V)$. Letting $k\to\infty$ in \eqref{eq:ev-bounded-V} also implies that
  \begin{equation*}
    0=\lim_{k\to\infty}\int_0^T\langle b(\tau)\varphi_{\gamma_k}(\tau),\varphi_{\gamma_k}(\tau)\rangle\,d\tau
    =\int_0^T\langle b(\tau)\varphi_\infty(\tau),\varphi_\infty(\tau)\rangle\,d\tau.
  \end{equation*}
  Hence $b\varphi_\infty=0$ almost everywhere on $\Omega\times (0,T)$. We finally have that
  \begin{equation*}
    \varphi_{\gamma}(t)
    =e^{\mu_1(\gamma b)T}U_{\gamma}(t+T,0)\varphi_{\gamma}(0)
    \leq e^{2|\mu^*(b)|T}T^{-N/4}U_\gamma(t+T,0)1
  \end{equation*}
  for all $\gamma>0$ and all $t\in[0,T]$. We know that $U_\gamma(t+T,0)1=U_\gamma(t,0)1\downarrow U_\infty(t,0)1$.
\end{proof}
We next give a criterion for $\mu^*(b)$ to be finite established originally in \cite{daners:15:ppe}.
\begin{lemma}
  \label{lem:mu-star-finite}
  Suppose that $b\in L^\infty(\mathbb R,L^\infty(\Omega))$ is $T$-periodic. Let
  \begin{equation}
    \label{eq:b-zero-set}
    Q_0:=\interior\{(x,t)\in\Omega\times\mathbb R\colon b=0 \text{ a.e. in a neighbourhood relative to }\Omega\times\mathbb{R}\}.
  \end{equation}
  Assume that there exists a $T$-periodic funcion $\beta\in C(\mathbb R,\Omega)$ such that $(\beta(t),t)\in Q_0$ for all $t\in\mathbb R$. Then $\mu^*(b)<\infty$.
\end{lemma}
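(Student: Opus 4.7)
The plan is to bound $\mu_1(\gamma b)$ above uniformly in $\gamma\geq 0$ by producing a nonnegative, nonzero weak subsolution of $\partial_t+\mathcal A(t)+\gamma b(t)$ whose support sits inside $Q_0$; because $b$ vanishes on the support, the $\gamma b$ term drops out and the resulting upper bound is independent of $\gamma$.

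First I would extract a ``moving tube''. The set $K:=\{(\beta(t),t):t\in[0,T]\}$ is the continuous image of a compact interval, hence compact in $\Omega\times\mathbb R$. Since $K\subset Q_0$ and $Q_0$ is open in $\Omega\times\mathbb R$, a standard covering argument yields $\varepsilon>0$ such that
\[
  C_\varepsilon:=\bigl\{(x,t)\in\Omega\times\mathbb R\colon |x-\beta(t)|<\varepsilon\bigr\}
\]
lies in $Q_0$ and has $\overline{C_\varepsilon}$ at positive distance from $\partial\Omega\times\mathbb R$. After mollifying $\beta$ (which stays inside $Q_0$ for a suitably shrunk $\varepsilon$) I may assume $\beta$ is smooth, so that the lateral boundary of $C_\varepsilon$ is smooth. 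By the definition of $Q_0$, $b=0$ a.e.\ on $C_\varepsilon$.

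Next I would consider the $T$-periodic-parabolic eigenvalue problem for $\partial_t+\mathcal A(t)$ on the non-cylindrical domain $C_\varepsilon$ with Dirichlet conditions on the lateral boundary. The associated evolution system acting on $L^{2}$ of the moving ball is compact, positive and irreducible, so the Krein-Rutman argument used in Theorem~\ref{thm:pp-evp-abstract-existence} supplies a finite principal eigenvalue $\lambda_\varepsilon$ with a strictly positive $T$-periodic eigenfunction $\phi_\varepsilon$ on $C_\varepsilon$ vanishing on the lateral boundary. Set $\tilde\phi(x,t):=e^{-\lambda_\varepsilon t}\phi_\varepsilon(x,t)\mathbf 1_{C_\varepsilon}(x,t)$. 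Inside $C_\varepsilon$ one checks $\partial_t\tilde\phi+\mathcal A(t)\tilde\phi=0$, and since $\supp\tilde\phi\subset C_\varepsilon\subset Q_0$ also $\gamma b\tilde\phi\equiv 0$. Because $\phi_\varepsilon$ vanishes on the lateral boundary, the distributional derivatives of $\tilde\phi$ have no boundary contributions from $\partial_t\mathbf 1_{C_\varepsilon}$ or $\partial_j\mathbf 1_{C_\varepsilon}$ acting on $\phi_\varepsilon$, but the second-derivative term in $\mathcal A\tilde\phi$ picks up a boundary measure whose coefficient is the spatial conormal derivative of $\phi_\varepsilon$; that derivative is nonpositive by the parabolic Hopf lemma, so pairing against any nonnegative test function yields a nonpositive contribution. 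Hence $\tilde\phi$ is a weak subsolution of $\partial_t+\mathcal A(t)+\gamma b(t)$ on $\Omega\times\mathbb R$ for every $\gamma\geq 0$.

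Taking $v:=\phi_\varepsilon(\cdot,0)\geq 0$, $v\not\equiv 0$, the parabolic comparison principle (Theorem~\ref{thm:linear-existence}) applied to $U_\gamma(\cdot,0)v$ and to $\tilde\phi$ (which share initial value $v$) gives $U_\gamma(T,0)v\geq\tilde\phi(T)=e^{-\lambda_\varepsilon T}v$, using the $T$-periodicity of $\phi_\varepsilon$. The Collatz-Wielandt lower bound for the spectral radius of a positive, compact, irreducible operator on a Banach lattice then yields $\spr(U_\gamma(T,0))\geq e^{-\lambda_\varepsilon T}$, and Lemma~\ref{lem:U-ev-relation} converts this into $\mu_1(\gamma b)\leq\lambda_\varepsilon$ for every $\gamma\geq 0$. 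Letting $\gamma\to\infty$ gives $\mu^*(b)\leq\lambda_\varepsilon<\infty$. The hardest part will be making the weak subsolution step rigorous inside the $W(0,T;V,V')$ framework of the paper, since the conormal-jump computation must be done there rather than classically; if this turns out to be awkward I would first regularise both the coefficients of $\mathcal A$ and the curve $\beta$, run the argument for the regularised problem, and pass to the limit using Theorem~\ref{thm:evolution-convergence} together with Corollary~\ref{cor:spectral-radius-convergence}.
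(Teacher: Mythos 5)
Your approach is correct in outline but is a genuinely different route from the paper's. The paper argues by comparison: it replaces $b$ by the larger weight $\tilde b$ that equals $\|b\|_\infty$ off a closed tube $A=\{\|x-\beta(t)\|\le r\}\subseteq Q_0$ and vanishes on $A$, then invokes the monotonicity $\mu_1(\gamma b)\le\mu_1(\gamma\tilde b)$ from Theorem~\ref{thm:pp-evp-abstract-existence} together with the already-established result \cite[Theorem~4.2]{daners:15:ppe} that $\lim_{\gamma\to\infty}\mu_1(\gamma\tilde b)<\infty$ for such a tube weight. You instead give a direct, self-contained argument: build the periodic-parabolic Dirichlet eigenpair $(\lambda_\varepsilon,\phi_\varepsilon)$ on the moving tube, extend the eigenfunction by zero to get a subsolution of $\partial_t+\mathcal A(t)+\gamma b(t)$ (the $\gamma b$ term drops because $b$ vanishes on the tube), compare with $U_\gamma(\cdot,0)v$, and conclude $\operatorname{r}(U_\gamma(T,0))\ge e^{-\lambda_\varepsilon T}$ via Collatz--Wielandt, hence $\mu_1(\gamma b)\le\lambda_\varepsilon$ uniformly in $\gamma$. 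In essence you are re-deriving what the cited Daners--Thornett theorem provides, which buys self-containedness at the cost of having to set up two pieces of machinery the paper outsources: (i) well-posedness and Krein--Rutman theory for the periodic-parabolic eigenvalue problem on the \emph{non-cylindrical} tube $C_\varepsilon$ (the paper's Theorem~\ref{thm:pp-evp-abstract-existence} is only stated on the fixed cylinder $\Omega\times(0,T)$, so one needs a change of variable $y=x-\beta(t)$, which introduces a first-order drift term but stays inside the paper's class of operators \eqref{eq:A} after mollifying $\beta$), and (ii) the justification that the zero-extension is a weak subsolution in $W(0,T;V,V')$, which you correctly identify as the delicate point — the conormal-jump sign argument is fine in spirit but under the paper's $L^\infty$-coefficient, rough-domain standing assumptions a direct Hopf-lemma appeal is not available, and one should instead regularise and pass to the limit exactly as you propose using Theorem~\ref{thm:evolution-convergence} and Corollary~\ref{cor:spectral-radius-convergence}. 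With those technical steps filled in your argument is sound; the paper's proof is shorter because it piggybacks on \cite[Theorem~4.2]{daners:15:ppe}, whose proof is close in spirit to what you sketch.
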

\begin{proof}
  As $Q_0$ is open, there exists $r>0$ such that
  \begin{equation*}
    A:=\{(x,t)\in\Omega\times\mathbb R\colon \|x-\beta(t)\|\leq r\}\subseteq Q_0.
  \end{equation*}
  Define $\tilde b(x,t)=0$ for all $(x,t)\in A$ and $\tilde b(x,t)=\|b\|_\infty$ otherwise. Then $b\leq\tilde b$ and thus by Theorem~\ref{thm:pp-evp-abstract-existence} we have that $\mu_1(\gamma b)\leq\mu_1(\gamma\tilde b)$ for all $\gamma>0$. By \cite[Theorem~4.2]{daners:15:ppe} it follows that $\lim_{\gamma\to\infty}\mu_1(\gamma\tilde b)<\infty$. Hence also $\mu_1^*(b)<\infty$.
\end{proof}

\begin{remark}
  \label{ex:mu-finite}
  The first example for $\mu_1(b)$ to be finite appears in \cite{du:12:ple}. A general criterion that is at least close to necessary is given in \cite{daners:15:ppe} and was used in Lemma~\ref{lem:mu-star-finite}: Assume that $b\in L^\infty(\Omega\times[0,T])$ and extend it $T$-periodically in $t$ to the infinite strip $\Omega\times\mathbb R$. Assume that $Q_0$ as defined in \eqref{eq:b-zero-set} and that $\supp(b)$ are both non-empty. According Lemma~\ref{lem:mu-star-finite} we have that $\mu^*(b)<\infty$ if there exists a \emph{forward moving} $T$-periodic path in $Q_0$. The optimality of this condition for $\mu^*(b)<\infty$ is discussed in \cite[Section~5]{daners:15:ppe} with a counter example if that condition is violated. A further counter example appears in \cite{lopez-gomez:20:pzp}. We note that for $\mu^*(b)$ to be finite it is not necessary that the zero set of $b$ has non-empty interior. An example in the stationary case (which is periodic with any period) is given in \cite[Remark~4.1]{daners:18:gdg}.
\end{remark}

The assumptions in Lemma~\ref{lem:mu-star-finite} guarantee that $\mu^{*}(b)<\infty$ and in turn, by Theorem~\ref{thm:limit-support}, gives the existence of a sequence of eigenfunctions that have a limit in $H$. The same argument as in the proof of \cite[Theorem~4.2]{daners:15:ppe}, shows that $\varphi_\infty$ is a weak solution of
\begin{displaymath}
  \frac{\partial\varphi_\infty}{\partial t}+\mathcal A(t)\varphi_\infty=\mu^*(b)\varphi_\infty\qquad\text{in }Q_0.
\end{displaymath}
It is important point out differences with the elliptic case such as treated in \cite{daners:18:gdg}. In that case every connected component of $\left\{x\in\Omega\colon b(x)=0\right\}$ is either contained in the support of the eigenfunction of the equivalent limit eigenvalue problem, or it has empty intersection with that support. Let us explore an example in the periodic-parabolic problem, where that is not the case.

First we define some sets that will be convenient here and in later sections. Take all assumptions as in Lemma~\ref{lem:mu-star-finite} and set
\begin{equation}
  \label{eq:Qb-sections}
  \Omega_t:=\{x\in \Omega\colon (x,t)\in Q_0\}
\end{equation}
for $t\in\mathbb{R}$. Assume now that $C$ is the connected component of $Q_0$ containing $(x,0)$ and let $\tilde\Omega$ be the set of $y\in\Omega_T$ such that there exists $\beta_1\in C([0,T],\Omega)$ with $\beta_1(0)=x$ and $\beta_1(T)=y$. Let $\tilde Q_0\subseteq Q_0$ such that there exists $x\in\tilde\Omega$ and a function $\beta_{2}\in C([0,t],\Omega)$ such that $\beta_{2}(0)=x$ and $(\beta_{2}(s),s)\in Q_0$ for all $s\in[0,t]$. Then $\tilde Q_0$ is connected and as a consequence of \cite[Theorem~3.12]{daners:15:ppe} and by the $T$-periodicity of the limit eigenfunction $\varphi_\infty$, it follows that either $\varphi_\infty(x,t)>0$ for all $(x,t)\in \tilde Q_0$ or $\varphi(x,t)=0$ for all $(x,t)\in\tilde Q_0$.

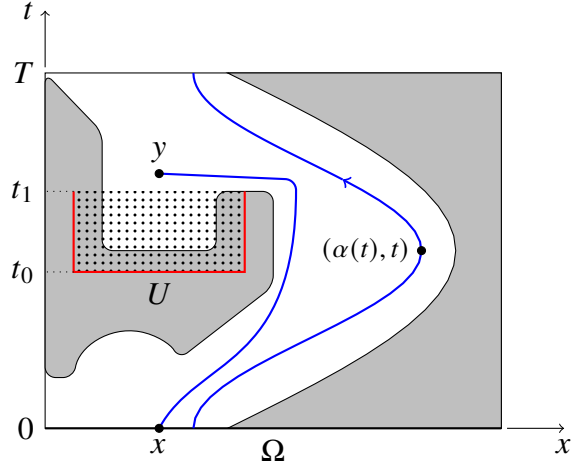
\begin{figure}[h!]
  \centering
  \begin{tikzpicture}[scale=1.5,
      declare function={
          T=pi;
          L=2;
          aa(\x)=L*(-0.35+pow(sin(deg(\x)),2));
        }]
    \draw (-L,0) rectangle (L,T);%
    \node at (-L,0) [left] {$0$};
    \node at (-L,T) [left] {$T$};
    \draw[->] (L+0.05,0) -- ++(0.5,0) node[below] {$x$};
    \draw[->] (-L,T+0.05) -- ++(0,0.5) node[left] {$t$};
    \draw[fill=lightgray] plot[domain=0:T] ({L*(-0.2+sin(deg(\x)))},\x) -- (L,T) -- (L,0) -- cycle;%
    \coordinate (O) at (-L,0);
    \coordinate (A1) at (-L/8,2*T/3);
    \coordinate (A2) at (-L/8,2.2*T/5);
    \coordinate (A3) at (-7*L/8,2.2*T/5);
    \coordinate (A4) at (-7*L/8,2*T/3);
    \coordinate (x) at (-L/2,0);
    \coordinate (y) at (-L/2,43*T/60);
    \draw[rounded corners,fill=lightgray] (-L,T/7) -- (-7*L/8,T/7) arc[start angle=170, end angle=30,radius=L/4] -- (0,2*T/5) -- (0,2*T/3) -- (-L/4,2*T/3) -- (-L/4,T/2) -- (-3*L/4,T/2) -- (-3*L/4,5*T/6) -- (-L,T) -- cycle;%
    \draw[thick] (-L,0) -- (L,0) node[midway,below] {$\Omega$};
    \draw[thick,blue,->] plot[domain=0:0.7*T] ({aa(\x)},\x);
    \draw[thick,blue] plot[domain=0.7*T:T] ({aa(\x)},\x);%
    \draw[fill] ({aa(T/2)},T/2) circle[radius=1pt] node[left,font=\footnotesize]  {$(\alpha(t),t)$};%
    \draw[thick,blue,rounded corners] (x) .. controls (-L/3,T/5) and (L/10,T/6) .. (L/10,21*T/30) -- (y);
    \draw[fill] (y) circle[radius=1pt] node[above] {$y$};
    \draw[fill] (x) circle[radius=1pt] node[below] {$x$};
    \draw[thick,red,pattern=dots] (A1) -- (A2) -- (A3) node[midway,below,black] {$U$} -- (A4);
    \draw[dotted] (A4) -- (A4-|O) node[left] {$t_1$};
    \draw[dotted] (A3) -- (A3-|O) node[left] {$t_0$};
  \end{tikzpicture}
  \caption{Example of $b$ with $\mu^*(b)<\infty$.}
  \label{fig:mu-finite}
\end{figure}

Figure~\ref{fig:mu-finite} shows an example of a weight function $b$. The shaded regions represent $Q_b$ and the white region including the dotted part represents $Q_0$. The set $\tilde Q_0$ is the white region excluding the dotted part. The figure also shows a curve given by $\alpha\in C([0,T],\Omega)$ inside the zero set with $\alpha(0)=\alpha(T)$. The figure also shows the path connecting points as required in the definition of $\tilde Q_0$. From our previous comments, $\varphi_\infty(x,t)>0$ for all $(x,t)\in\tilde Q_0$. Let us now focus our attention on the dotted region within $Q_0$. That region is part of a cylinder $U\times[t_0,t_1]$ for some open set $U\subseteq\Omega$ and $0\leq t_0<t_1\leq T$. Its parabolic boundary lies in $Q_b$ as shown in Figure~\ref{fig:mu-finite}. We know from Theorem~\ref{thm:limit-support} that $\varphi_\gamma(x,t)\to 0$ on $(U\times\{t_0\})\cup\partial U\times[t_0,t_1)$. Hence the parabolic maximum principle implies that $\varphi_\gamma(x,t)\to 0=\varphi_\infty(x,t)$ as $\gamma\to\infty$ for all $(x,t)\in U\times [t_0,t_1)$.

The periodic-parabolic spectrum gives us a way to prove the existence of periodic solutions to
\begin{equation}
  \label{eq:pp-ivp-abstract}
  \begin{aligned}
    \dot u+A(t)u & =f(t)  &  & \text{in (0,T]} \\
    u(0)         & =u(T).
  \end{aligned}
\end{equation}
whenever $f\in L^2((0,T),V')$. Since we are interested in positive solutions we establish the following theorem, see also \cite[Theorem~2.3]{daners:96:dpl}.

\begin{theorem}
  \label{thm:pp-inhomogeneous-existence}
  Assume that $1\in\varrho(U(T,0))$. Then \eqref{eq:pp-ivp-abstract} has a unique solution $u\in W(0,T;V,V')$. Moreover, if $f\geq 0$, then \eqref{eq:pp-ivp-abstract} has a positive solution if and only if $\spr(U(T,0))<1$ or equivalently $\mu_1(0)>0$.
\end{theorem}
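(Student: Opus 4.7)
The plan is to reduce the periodic problem to an algebraic equation on the phase space $H$ via the variation-of-constants formula \eqref{eq:variation-of-constants}, and then use the positivity and compactness properties of $U(T,0)$ together with the Krein--Rutman framework already set up for Theorem~\ref{thm:pp-evp-abstract-existence}.

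First I would solve the initial value problem for an arbitrary $u_0 \in H$. By Theorem~\ref{thm:linear-existence} the Cauchy problem with data $u(0) = u_0$ has a unique solution in $W(0,T;V,V')$, and by the variation-of-constants formula \eqref{eq:variation-of-constants} it is given by
\begin{equation*}
  u(t) = U(t,0)u_0 + \int_0^t U(t,\tau)f(\tau)\,d\tau.
\end{equation*}
The periodicity condition $u(0) = u(T)$ therefore amounts to the linear equation
\begin{equation*}
  (I - U(T,0))u_0 = \int_0^T U(T,\tau)f(\tau)\,d\tau =: F \in H.
\end{equation*}
Since $1 \in \varrho(U(T,0))$ by hypothesis, $I - U(T,0)$ is a bijection on $H$ and $u_0 = (I - U(T,0))^{-1}F$ is uniquely determined. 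Continuous dependence on $f$ then follows from the a priori bound of Remark~\ref{rem:W-apriori-estimate}.

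Now assume $f \geq 0$. If $\spr(U(T,0)) < 1$ then the Neumann series $(I - U(T,0))^{-1} = \sum_{n\geq 0} U(T,0)^n$ converges in $\mathcal L(H)$ and, because each $U(T,0)^n$ is a positive operator by Proposition~\ref{prop:evolution-system}(U7) and $F \geq 0$, we obtain $u_0 \geq 0$; then positivity of the evolution system gives $u(t) \geq 0$ for all $t$. Conversely, suppose a nonnegative periodic solution exists for some $f \geq 0$ with $f \not\equiv 0$. By Theorem~\ref{thm:pp-evp-abstract-existence} applied to the adjoint (equivalently, by the Krein--Rutman theorem applied to $U(T,0)^*$, using that $U(T,0)$ is compact and irreducible), there is a strictly positive functional $\varphi^* \in H'$ with $U(T,0)^*\varphi^* = \spr(U(T,0))\varphi^*$. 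Pairing the identity $(I - U(T,0))u_0 = F$ with $\varphi^*$ yields
\begin{equation*}
  \bigl(1 - \spr(U(T,0))\bigr)\langle u_0,\varphi^*\rangle = \langle F,\varphi^*\rangle > 0,
\end{equation*}
the strict positivity on the right coming from $f \not\equiv 0$ together with (U7) and $\varphi^* \gg 0$. Since $\langle u_0,\varphi^*\rangle \geq 0$, this forces $\spr(U(T,0)) < 1$.

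Finally, the equivalence $\spr(U(T,0)) < 1 \iff \mu_1(0) > 0$ is immediate from Lemma~\ref{lem:U-ev-relation}, which gives $\mu_1(0) = -\tfrac{1}{T}\log\spr(U(T,0))$. The only mildly delicate step in this plan is the converse direction for positivity: it requires the existence of a positive eigenfunctional for the adjoint, which is not stated explicitly in the excerpt but follows from the same Krein--Rutman / de Pagter argument used in the proof of Theorem~\ref{thm:pp-evp-abstract-existence}, applied to $U(T,0)^*$ (which inherits compactness and whose positivity cone has nonempty quasi-interior after identifying $H' \cong H$).
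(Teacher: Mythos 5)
Your proposal is correct and follows essentially the same route as the paper: reduce the periodic problem to the fixed-point equation $(I-U(T,0))u_0 = w(T)$ (where $w$ is the inhomogeneous solution with $w(0)=0$), invert using $1\in\varrho(U(T,0))$, and invoke the Neumann series for positivity of $(I-U(T,0))^{-1}$ when $\spr(U(T,0))<1$. The one genuine difference is the converse direction (positivity of the solution forces $\spr(U(T,0))<1$): the paper simply cites \cite[Corollary~12.4]{daners:92:aee}, whereas you supply a self-contained duality argument using the strictly positive eigenfunctional $\varphi^*$ of the compact irreducible operator $U(T,0)^*$ and the identity $(1-\spr(U(T,0)))\langle u_0,\varphi^*\rangle = \langle w(T),\varphi^*\rangle > 0$. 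Both arguments are standard Krein--Rutman material; your version is more explicit at the cost of needing to justify that the adjoint has a strictly positive eigenfunctional, which you correctly note follows from irreducibility and compactness. One small caveat: the variation-of-constants formula \eqref{eq:variation-of-constants} is stated in the paper only for $f\in L^2((0,T),H)$, while the theorem allows $f\in L^2((0,T),V')$; the paper avoids this by defining $w$ directly as the IVP solution from Theorem~\ref{thm:linear-existence} rather than as an integral, and you should do the same (your $F$ is then just $w(T)$, and nothing else changes). Also, the ``if'' direction for $f\geq0$ tacitly requires $f\not\equiv 0$, as you noted for the converse; for $f\equiv 0$ the trivial solution is positive regardless of $\spr(U(T,0))$.
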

\begin{proof}
  Let $w(t)$ be the unique solution of $\dot w+A(t)w=f(t)$ for $t\in(0,T)$ with $w(0)=0$. Such a solution exists due to Theorem~\ref{thm:linear-existence} and is positive if $f\geq 0$. As $1\in\varrho(U(T,0)$ then $(I-U(T,0))^{-1}$ is a positive operator. Define $u_0:=(I-U(T,0))^{-1}w(T)\geq 0$ and set $u(t):=U(t,0)u_{0}+w(t)$. Then $\dot{u}+A(t)u=f(t)$ for $t\in(0,T]$, $u(0)=u_{0}$ and
  \begin{align*}
    u(0) & = (I-U(T,0))^{-1}w(T)                       \\
         & = U(T,0)(I-U(T,0))^{-1}w(T) + w(T)          \\
         & = w(T) - w(T) + (I-U(T,0))^{-1}w(T) = u(0),
  \end{align*}
  so $u$ is a solution of \eqref{eq:pp-ivp-abstract}. Since $U(T,0)$ is irreducible and compact we finally note that $u_0=(I-U(T,0))^{-1}w(T)$ cannot be positive if $\spr(U(T,0))\geq 1$, see for instance \cite[Corollary~12.4]{daners:92:aee}.
\end{proof}

\section{Comparison of Principal Eigenfunctions}
\label{sec:eigenfunction-comparison}
The main result for this section is a comparison of the principal eigenfunctions of \eqref{eq:pp-evp-abstract}. The comparison we show here will be fruitful in showing the existence of sub and super-solutions of \eqref{eq:pp-logistic} on a non-smooth domain, which are crucial to derive the existence result Theorem~\ref{thm:main}. This is a counterpart of \cite[Theorem~6.1]{arendt:23:see} in a periodic-parabolic setting under somewhat different assumptions.

\begin{theorem}
  \label{thm:eigenfunction-comparison}
  Let $\varphi_{0}$ and $\varphi_{1}$ denote the principal eigenfunction of \eqref{eq:pp-evp-abstract} for $b_0$ and $b_1>0$, respectively. If $b_1$ has compact support and $\mu_1(b_0)\leq\mu_1(b_1)$, then there exists $c>0$ such that,
  \begin{equation*}
    \varphi_{0}<c\varphi_{1}
  \end{equation*}
  in $\Omega\times[0,T]$.
\end{theorem}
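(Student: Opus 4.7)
The plan is to build $w:=c\varphi_1-\varphi_0$ for $c>0$ large and show $w>0$ on $\Omega\times[0,T]$. The approach is to control $w$ on a cylindrical neighbourhood of $\supp b_1$ by a direct choice of $c$, and then to handle all of $\Omega\times[0,T]$ at once by adding a penalty potential supported on that neighbourhood and invoking Theorem~\ref{thm:pp-inhomogeneous-existence}.

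Since $\supp b_1$ is compact in $\Omega\times[0,T]$, its spatial projection $K_0:=\pi_\Omega(\supp b_1)$ is a compact subset of $\Omega$ and $b_1\equiv 0$ on $(\Omega\setminus K_0)\times[0,T]$. By Proposition~\ref{prop:es-ev-monotonicity}(iv), $\varphi_1$ is continuous and strictly positive on $\Omega\times[0,T]$, so $\delta:=\min_{K_0\times[0,T]}\varphi_1>0$; the mapping properties (U5)--(U6) of Proposition~\ref{prop:evolution-system} combined with $T$-periodicity give $\varphi_0,\varphi_1\in L^\infty(\Omega\times[0,T])$. Pick $c$ large enough that $c\delta-\|\varphi_0\|_\infty\ge\varepsilon>0$, so that $w\ge\varepsilon$ on $K_0\times[0,T]$. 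A short substitution from the two eigenvalue equations gives
\begin{equation*}
  \bigl(\partial_t+A(t)-\mu_1(b_1)\bigr)w
  = -c\,b_1\varphi_1+\bigl(\mu_1(b_1)-\mu_1(b_0)+b_0\bigr)\varphi_0.
\end{equation*}
Under the standing assumption $b_0\ge 0$ (the relevant case for the logistic application), the right-hand side is non-negative on $(\Omega\setminus K_0)\times[0,T]$ and bounded below by $-c\|b_1\|_\infty\|\varphi_1\|_\infty$ on $K_0\times[0,T]$.

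I absorb this loss by adding a penalty $\gamma\mathbf{1}_{K_0}w$ to the left-hand side. For $\gamma\ge c\|b_1\|_\infty\|\varphi_1\|_\infty/\varepsilon$ and $\gamma>\|b_1\|_\infty$ the extra term $\gamma w\ge\gamma\varepsilon$ on $K_0\times[0,T]$ dominates the loss, while off $K_0$ the penalty vanishes; hence
\begin{equation*}
  \bigl(\partial_t+A(t)+\gamma\mathbf{1}_{K_0}-\mu_1(b_1)\bigr)w\ge 0 \quad\text{on } \Omega\times[0,T].
\end{equation*}
Moreover $b_1\le\|b_1\|_\infty\mathbf{1}_{K_0}<\gamma\mathbf{1}_{K_0}$ with strict inequality on a set of positive measure, so Theorem~\ref{thm:pp-evp-abstract-existence} yields
\begin{equation*}
  \mu_1(b_1)\le\mu_1(\|b_1\|_\infty\mathbf{1}_{K_0})<\mu_1(\gamma\mathbf{1}_{K_0}).
\end{equation*}
Thus $A(t)+\gamma\mathbf{1}_{K_0}-\mu_1(b_1)$ has strictly positive principal eigenvalue, which via Lemma~\ref{lem:U-ev-relation} makes the spectral radius of the corresponding evolution system at $T$ strictly less than $1$. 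Theorem~\ref{thm:pp-inhomogeneous-existence} then supplies a unique $T$-periodic solution $u\ge 0$ to the inhomogeneous problem with non-negative right-hand side $\bigl(\partial_t+A(t)+\gamma\mathbf{1}_{K_0}-\mu_1(b_1)\bigr)w$, and uniqueness forces $w=u\ge 0$ on $\Omega\times[0,T]$. Replacing $c$ by $c+1$ finally gives the strict inequality $(c+1)\varphi_1-\varphi_0=w+\varphi_1\ge\varphi_1>0$ pointwise on $\Omega\times[0,T]$ by Proposition~\ref{prop:es-ev-monotonicity}(iv).

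The main obstacle is the careful tuning of the two parameters in the right order: first $c$ large enough that $w\ge\varepsilon$ on $K_0\times[0,T]$, then $\gamma$ large enough both to compensate the possibly negative contribution on $K_0\times[0,T]$ and to push the principal eigenvalue of the penalised operator past $\mu_1(b_1)$. With these in place, the appeal to Theorem~\ref{thm:pp-inhomogeneous-existence} together with Lemma~\ref{lem:U-ev-relation} finishes the argument.
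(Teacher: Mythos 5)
Your proof is correct and takes a genuinely different route from the paper's. Both arguments begin the same way: use continuity and positivity of $\varphi_1$ from Proposition~\ref{prop:es-ev-monotonicity}(iv) to bound $\varphi_1$ away from zero on the compact set $\supp b_1$, choose $c$ large so that $c\varphi_1-\varphi_0$ is bounded away from zero there, and use the (implicit in the paper, explicit in your write-up) standing assumption $b_0\geq 0$ to control the $b_0\varphi_0$ term. After that the two proofs diverge. The paper sets $v_c=\varphi_0-c\varphi_1$, assumes $v_c^+\neq 0$ for all $c$ towards a contradiction, tests the resulting differential inequality against $v_c^+$, and combines coercivity, the identity $\int\langle\dot v_c,v_c^+\rangle=0$ from periodicity, the Sobolev embedding $V\hookrightarrow L^q$ for some $q>2$, and H\"older's inequality to deduce a scale-invariant bound $1\leq\varepsilon^{2(1/2-1/q)}K_1K$; then it lets the measure of the set where $v_c^+$ can live shrink below $\varepsilon$ by sending $c\to\infty$, reaching a contradiction. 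You instead work directly with $w:=c\varphi_1-\varphi_0$: from the two eigenvalue equations you compute the source term for $w$, observe that the only negative contribution $-cb_1\varphi_1$ lives on a cylinder where $w\geq\varepsilon$ has already been enforced, add a penalty potential $\gamma\mathbf{1}_{K_0}$ to make the whole right-hand side non-negative, verify via the comparison $b_1<\gamma\mathbf{1}_{K_0}$ and Theorem~\ref{thm:pp-evp-abstract-existence} that the penalised operator has $\spr(U(T,0))<1$, and then invoke inverse positivity of the periodic problem (Theorem~\ref{thm:pp-inhomogeneous-existence}) plus uniqueness to conclude $w\geq 0$; bumping $c$ to $c+1$ then yields the strict inequality. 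Your approach trades the variational contradiction and Sobolev embedding for an operator-theoretic argument hinging on a penalised periodic-parabolic inverse positivity result; it is arguably more elementary, makes the role of $b_0\geq 0$ transparent, and packages the conclusion as a direct positivity statement rather than a proof by contradiction, at the price of introducing a second parameter $\gamma$ to be tuned after $c$.
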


\begin{proof}
  Set $v_{c}:=\varphi_{0}-c\varphi_{1}$. We prove the existence of some $c_{0}>0$ such that $\varphi_{0}-c\varphi_{1}\leq 0$ on $\Omega\times[0,T]$ for all $c\geq c_{0}$. This is equivalent to showing $v_{c}^{+}=0$ for all $c\geq c_{0}$.

  Let us assume $v_{c}^{+}>0$ for all $c\geq 0$ and work towards a contradiction. We have the following equations,
  \begin{align}
    \label{eq:zero-ev}
    \dot{\varphi}_{0} + A(t)\varphi_{0} + b_0(t)\varphi_0
     & = \mu_{1}(0)\varphi_{0}                       \\
    \label{eq:m-ev}
    \dot{\varphi}_{1} + A(t)\varphi_{1}
     & = \mu_{1}(b_1)\varphi_{1} - b_1(t)\varphi_{1}
  \end{align}
  Subtract $c$ lots of \eqref{eq:m-ev} from \eqref{eq:zero-ev},
  \begin{equation*}
    \dot{v}_{c} + A(t)v_{c} + b_0(t)\varphi_0(t)
    = \mu_{1}(b_0)\varphi_{0}(t) - c\mu_{1}(m)\varphi_{1}(t) + cb_1(t)\varphi_{1}(t)
  \end{equation*}
  By assumption $\mu_{1}(b_0) < \mu_{1}(b_1)$ and $b_0(t)\varphi_0(t)\geq 0$ and thus
  \begin{equation*}
    \dot{v}_{c} + A(t)v_{c} \leq \mu_{1}(b_0)v_{c} + cb_1(t)\varphi_{1}(t).
  \end{equation*}
  By Proposition~\ref{prop:es-ev-monotonicity} we have that $\varphi_{k}\in BC(\Omega\times[0,])$ with $\varphi_k(x,t)>0$ for all $(x,t)\in\Omega\times[0,T]$. Since $\supp(b_1)\subseteq\Omega\times[0,T]$ is compact there exists $\delta>0$ such that $\varphi_{1}(x,t)>\delta$ for all $(x,t)\in\supp(b_1)$. Thus there exists $c_{0}>0$ with,
  \begin{equation*}
    b_1v_{c}^{+} = b_1(\varphi_{0}-c\varphi_{1})^{+} = 0
  \end{equation*}
  for all $c\geq c_{0}$.  We note that $v_{c}^{+}\in L^{2}((0,T),V)$ is a valid test function in the variational framework and so,
  \begin{equation}
    \label{eq:pp-evp-inequality}
    \begin{split}
      \int_{0}^{T}\left\langle\dot{v}_{c}(t),v_{c}^{+}(t)\right\rangle\,dt
       & + \int_{0}^{T}\aaa(t,v_{c}(t),v_{c}^{+}(t))\,dt                    \\
       & \leq\mu_{1}(b_0)\int_{0}^{T}\left(v_{c}(t),v_{c}^{+}(t)\right)\,dt
    \end{split}
  \end{equation}
  for all $c\geq c_{0}$. Applying \cite[Lemma~3.3]{daners:00:hke}, the $T$-periodicity of $v_c$ and the definitions of $\aaa(t,\cdot,\cdot)$ we have
  \begin{equation*}
    \begin{split}
      \int_{0}^{T}\aaa(t, & v_{c}^+(t),v_{c}^{+}(t))\,dt                                                    \\
                          & =\frac{1}{2}\left(\|v_{c}^{+}(T)\|^{2} - \|v_{c}^{+}(0)\|^{2}\right)
      + \int_{0}^{T}\aaa(t, v_{c}(t),v_{c}^{+}(t))\,dt                                                      \\
                          & \leq\mu_{1}(b_0)\int_{0}^{T}\left\langle v_{c}(t),v_{c}^{+}(t)\right\rangle\,dt
      =\mu_{1}(b_0)\int_{0}^{T}\|v_{c}^{+}(t)\|^2\,dt
    \end{split}
  \end{equation*}
  for all $c\geq c_0$. By Proposition~\ref{prop:form-properties} we can choose $\omega\in\mathbb{R}$ sufficiently large so that $\mu(0)+\omega>0$ and
  \begin{equation*}
    \alpha\int_{0}^{T}\|v_{c}^{+}(t)\|_{V}^{2}\,dt
    \leq (\mu_{1}(b_0)+\omega)\int_{0}^{T}\|v_{c}^{+}(t)\|_{H}^{2}\,dt
  \end{equation*}
  for all $c\geq c_0$. Hence there exists $K>0$ such that,
  \begin{equation}
    \label{eq:v-plus-estimate}
    \int_{0}^{T}\|v_{c}^{+}(t)\|_{V}^{2}\,dt
    \leq K\int_{0}^{T}\|v_{c}^{+}(t)\|_{H}^{2}\,dt
  \end{equation}
  By Rellich-Kondarchov embeddings \cite[Theorem~9.16]{brezis:11:fa}, $V\hookrightarrow L^{q}(\Omega)$ for some $q>2$, so there exists $K_{1}>0$ such that,
  \begin{equation}
    \label{eq:time-integral-bound}
    \int_{0}^{T}\left\| v_{c}^{+}(t) \right\|_{q}^{2}\,dt
    \leq K_{1}\int_{0}^{T}\left\| v_{c}^{+}(t) \right\|_{V}^{2}\,dt
  \end{equation}
  For any $\varepsilon\in(0,1)$ we can choose a compact set $A_{\varepsilon}\subseteq\Omega$ such that the measure $\left|A_{\varepsilon}^{c}\right|<\varepsilon$ and
  \begin{equation*}
    \supp(b_1)\subseteq A_\varepsilon\times [0,T].
  \end{equation*}
  Also, as $\varphi_1$ is continuous and $\varphi_{1}>0$ on $A_{\varepsilon}\times[0,T]$, there exists $c_{1}\geq c_{0}$ such that,
  \begin{equation*}
    v_{c}^{+} = (\varphi_{0}-c\varphi_{1})^{+}=0
    \quad\text{in }A_{\varepsilon}
  \end{equation*}
  on $A_\varepsilon\times[0,T]$ for all $c\geq c_{1}$. Set,
  \begin{equation*}
    w_{c}(t)
    := \dfrac{v_{c}^{+}(t)}{\left\| v_{c}^{+}(t) \right\|_{L^{2}((0,T),H)}}.
  \end{equation*}
  By H\"older's inequality, \eqref{eq:time-integral-bound} and \eqref{eq:v-plus-estimate} we see that
  \begin{align*}
    1 & =\int_0^T\|w_c(t)\|_H^2\,dt
    =\int_0^T\|w_c(t)1_{A_\varepsilon^c}\|_2^2\,dt                                                        \\
      & \leq|A_\varepsilon^c|^{2(\frac{1}{2}-\frac{1}{q})}\int_0^T\|w_c(t)\|_q^2\,dt
    \leq\varepsilon^{2(\frac{1}{2}-\frac{1}{q})}K_{1}\int_{0}^{T}\left\| w_{c}(t) \right\|_{V}^{2}\,dt    \\
      & \leq\varepsilon^{2(\frac{1}{2}-\frac{1}{q})}K_{1}K\int_{0}^{T}\left\| w_{c}(t) \right\|_H^{2}\,dt
    =\varepsilon^{2(\frac{1}{2}-\frac{1}{q})}K_{1}K                                                       \\
  \end{align*}
  for all $c\geq c_1$. If we choose $\varepsilon>0$ such that $\varepsilon^{2(\frac{1}{2}-\frac{1}{q})}K_{1}K<1$, then we have a contradiction. Hence there exists a $c>0$ such that $v_c^+=0$, that is $\varphi_0\leq c\varphi_1$ as claimed.
\end{proof}

\section{The periodic-parabolic logistic equation}
\label{sec:logistic}
In this section we prove the results in Theorem~\ref{thm:main}. We have introduced precise assumptions and an abstract framework for the linear problem in Section~\ref{sec:assumptions} and Section~\ref{sec:evolution-systems}. We continue to use that as well as the assumptions on the nonlinearity below.
\begin{assumption}
  \label{ass:g}
  Let $g\in C(\overline\Omega\times\mathbb R\times[0,\infty),\mathbb R)$ with
  \begin{itemize}
  \item $g(x,t,0)=0$ for all $(x,t)\in\overline\Omega\times\mathbb R$;
  \item $g(x,t+T,\xi)=g(x,t,\xi)$ for all $(x,t,\xi)\in\overline\Omega\times\mathbb R\times[0,\infty)$ ($T$-periodicity);
  \item $\dfrac{\partial g}{\partial\xi}\in C(\overline\Omega\times\mathbb R\times[0,\infty))$ and $\dfrac{\partial g}{\partial\xi}(x,t,\xi)>0$ for all $(x,t,\xi)\in\overline\Omega\times\mathbb R\times[0,\infty)$. \item $g(x,t,\xi)\to\infty$ as $\xi\to\infty$ uniformly with respect to $(x,t)$ in compact subsets of $\Omega\times[0,T]$.
  \end{itemize}
  Given a function $u\colon\overline\Omega\times[0,\infty)\to[0,\infty)$ the function $(x,t)\mapsto g(x,t,u(x,t))$ defines a function on $\overline\Omega\times[0,\infty)$. For convenience we often write $g(t,u)$ for the function given by $t\mapsto g(t,\cdot,u(\cdot,t))$ or even $g(u)$ for the function $g(\cdot\,,\cdot,u(\cdot\,,\cdot))$.
\end{assumption}
With these assumptions and notation the periodic parabolic evolution equation \eqref{eq:pp-logistic} can be written in the abstract form
\begin{equation}
  \label{eq:pp-logistic-abstract}
  \begin{aligned}
    \dot{u}+A(t)u & =\mu u - b(t)g(t,u)u &  & t\in(0,T] \\
    u(0)          & = u(T).
  \end{aligned}
\end{equation}
in $H$. We seek necessary and sufficient conditions for the existence of a non-trivial postive solution. We also consider solutions of the semi-linear evolution equation
\begin{equation}
  \label{eq:ivp-logistic-abstract}
  \begin{aligned}
    \dot{u}+A(t)u & =\mu u - b(t)g(t,u)u &  & t\in(0,T] \\
    u(0)          & = u_0.
  \end{aligned}
\end{equation}
with $u_0\in L^\infty(\Omega)$. By a solution to \eqref{eq:pp-logistic-abstract} or \eqref{eq:ivp-logistic-abstract} we mean an element $u\in W(0,T;V,V')$ with $g(\cdot\,,u)u\in L^2((0,T), V')$ that satisfies \eqref{eq:pp-logistic-abstract} or \eqref{eq:ivp-logistic-abstract}, respectively. We extend $A(t)$, $b(t)$ $T$-periodically to $\mathbb R$. Then any solution of \eqref{eq:pp-logistic-abstract} can be extended $T$-periodically to $t\in\mathbb R$ and any solution of \eqref{eq:ivp-logistic-abstract} can be extended to a solution for $t\geq 0$. We first prove that positive solution to \eqref{eq:pp-logistic-abstract} or \eqref{eq:ivp-logistic-abstract} are necessarily bounded.
\begin{lemma}
  \label{lem:solution-bounded}
  If $u$ is a non-trivial positive solution of \eqref{eq:pp-logistic-abstract} or \eqref{eq:ivp-logistic-abstract}, then $u\in L^\infty((0,T),L^\infty(\Omega))$ and $u(x,t)>0$ for all $(x,t)\in\Omega\times(0,T]$ and
  \begin{equation*}
    0\leq u(t)\leq e^{\mu t}U(t,0)u(0)
  \end{equation*}
  for all $t\geq 0$.
\end{lemma}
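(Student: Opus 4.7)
The plan is to reduce the statement to the linear theory of Theorem~\ref{thm:linear-existence} and to the properties collected in Proposition~\ref{prop:evolution-system}, using throughout that $b(t)g(t,u)u\ge 0$ whenever $u\ge 0$ (since $b\ge 0$ and $g(x,t,\xi)\ge g(x,t,0)=0$ for $\xi\ge 0$ by Assumption~\ref{ass:g}). I would first prove the dominating bound $u(t)\le e^{\mu t}U(t,0)u(0)$, then harvest the $L^\infty$-estimate from (U5)--(U6), and finally extract pointwise positivity from (U7) applied to a suitable linearised evolution system.

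For the dominating bound I set $w(t):=e^{-\mu t}u(t)$, which lies in $W(0,T;V,V')$ and satisfies $\dot w+A(t)w=-b(t)g(t,u(t))w$ with $w(0)=u(0)$. Letting $\tilde w(t):=U(t,0)u(0)$ and $v:=\tilde w-w$, the difference solves
\begin{equation*}
\dot v+A(t)v=b(t)g(t,u(t))w(t),\qquad v(0)=0.
\end{equation*}
The right-hand side is nonnegative and belongs to $L^2((0,T),V')$ because $g(\cdot,u)u\in L^2((0,T),V')$ by the standing definition of a solution to \eqref{eq:pp-logistic-abstract}/\eqref{eq:ivp-logistic-abstract}. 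Theorem~\ref{thm:linear-existence} therefore gives $v\ge 0$, i.e.\ $u(t)\le e^{\mu t}U(t,0)u(0)$ on $[0,T]$. Extending $u$ and the coefficients $T$-periodically and iterating on successive intervals $[kT,(k+1)T]$ delivers the inequality for all $t\ge 0$.

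The $L^\infty$ bound is then immediate. For \eqref{eq:ivp-logistic-abstract}, $u_0\in L^\infty(\Omega)$, and (U6) provides $U(t,0)u_0\in L^\infty(\Omega)$ uniformly on $[0,T]$, so the dominating bound concludes. For \eqref{eq:pp-logistic-abstract} only $u(0)\in H$ is available, so I invoke the smoothing property (U5): for any fixed $t_0\in(0,T)$, the inequality yields $u(t_0)\le e^{\mu t_0}U(t_0,0)u(0)\in L^\infty(\Omega)$. Restarting the sub-solution argument from time $t_0$ and applying (U6) bounds $u$ on $[t_0,t_0+T]$, which by $T$-periodicity gives $u\in L^\infty((0,T),L^\infty(\Omega))$.

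With $u\in L^\infty$ at hand, $b(t)g(t,u(t))-\mu$ is an admissible $L^\infty$-potential, and the evolution system $\tilde U(t,s)$ associated with $(A(t)+b(t)g(t,u(t))-\mu I)_{t\in[0,T]}$ is well defined with $u(t)=\tilde U(t,s)u(s)$ for $0\le s\le t\le T$. For \eqref{eq:ivp-logistic-abstract}, the dominating bound and non-triviality force $u_0>0$, so (U7) applied to $\tilde U$ yields $u(x,t)>0$ on $\Omega\times(0,T]$. For \eqref{eq:pp-logistic-abstract}, non-triviality together with $u\in C([0,T],H)$ provide some $s_0\in[0,T)$ with $u(s_0)>0$; (U7) gives $u(x,t)>0$ on $\Omega\times(s_0,T]$, and periodicity $u(0)=u(T)>0$ combined with a second application of (U7) at $s=0$ extends pointwise positivity to $\Omega\times(0,T]$. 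The only delicate point I expect is the weak-formulation bookkeeping in the comparison step: verifying that $b(t)g(t,u)w\in L^2((0,T),V')$ with nonnegative $V'$-pairing against nonnegative test functions in $V$. This is immediate from $b,g(u),w\ge 0$ and the standing definition of a solution; everything else is routine use of (U5)--(U7) and $T$-periodicity.
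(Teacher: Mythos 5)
Your proposal is correct and follows essentially the same route as the paper's proof: a comparison argument (using Theorem~\ref{thm:linear-existence} and the sign $b(t)g(t,u)u\geq 0$) to obtain the dominating bound $0\leq u(t)\leq e^{\mu t}U(t,0)u(0)$, then the smoothing and boundedness properties (U5)--(U6) to get the $L^\infty$-estimate, and finally viewing $u$ as a solution of a linear equation with the bounded potential $b(t)g(t,u(t))-\mu$ so that (U7) yields strict positivity on $\Omega\times(0,T]$. The only differences are cosmetic: you factor out $e^{-\mu t}$ at the start (so your $v$ equals $e^{-\mu t}$ times the paper's $w$), and you handle the periodic-case $L^\infty$-bound and positivity a little more indirectly (restarting from an interior $t_0$, respectively first finding an $s_0$ with $u(s_0)>0$) whereas the paper exploits $u(0)=u(T)$ and the domination bound directly to conclude $u(0)\in L^\infty(\Omega)$ and $u(0)>0$.
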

\begin{proof}
  Let $u$ be a positive solution of \eqref{eq:ivp-logistic-abstract} with $u(0)\in L^\infty(\Omega)$. Then, the function given by $w(t):=e^{\mu t}U(t,0)u(0)-u(t)$ satisfies the equation
  \begin{equation*}
    \begin{aligned}
      \dot{w}+A(t)w - \mu w & = b(t)g(t,u)u &  & t\in(0,T], \\
      w(0)                  & = 0.
    \end{aligned}
  \end{equation*}
  As $b(t)g(t,u)\geq 0$ it follows from Theorem~\ref{thm:linear-existence} that $w\geq 0$. Hence
  \begin{equation*}
    0\leq u(t)
    =e^{\mu t}U(t,0)u(0)-w(t)
    \leq e^{\mu t}U(t,0)u(0)
  \end{equation*}
  for all $t\geq 0$. In case of a solution of \eqref{eq:pp-logistic-abstract} we have that
  \begin{equation*}
    \|u(0)\|_\infty
    =\|u(T)\|_\infty
    \leq e^{\mu T}\|U(T,0)u(0)\|_\infty
    \leq ce^{\mu T}T^{-N/4}\|u(0)\|_2
    <\infty
  \end{equation*}
  by Proposition~\ref{prop:evolution-system}(U5). Hence in either case it follows from Proposition~\ref{prop:evolution-system}(U6) that there exists $C>0$ with
  \begin{equation*}
    \|u(t)\|_\infty
    \leq e^{\mu t}\|U(t,0)u(0)\|_\infty
    \leq Ce^{|\mu|T}\|u(0)\|_\infty
  \end{equation*}
  for all $t\in[0,T]$, showing that $u\in L^\infty((0,T),L^\infty(\Omega))$. Setting $m(t):=b(t)g(t,u(t))$ we have $m\in L^\infty([0,T],L^\infty(\Omega))$ and $u$ satisfies the linear equation
  \begin{equation*}
    \dot u+A(t)u+(m(t)-\mu)u=0
  \end{equation*}
  for $t\in[0,T]$ with $u(0)>0$. Hence, if $(U_m(t,s))_{(t,s)\in\Delta_T}$ is the evolution system associated with $(A(t)+m(t)-\mu)_{t\in[0,T]}$, then $u(t)=U_m(t,0)u_0$ for all $t\in[0,T]$ and thus $u(x,t)>0$ for all $(x,t)\in\Omega\times(0,T]$ by Propostion~\ref{prop:evolution-system}(U7).
\end{proof}
\begin{remark}
  \label{rem:logistic-eigenvalue}
  We note that from the above lemma the function $[t\mapsto b(t)g(t,u(t))]\in L^\infty((0,T),L^\infty)$ if $u$ is a solution of \eqref{eq:pp-logistic-abstract}. As a consequence, $u$ is a positive periodic-parabolic eigenvector for $(A(t)+b(t)g(t,u(t)))_{t\in[0,T]}$ and hence according to Theorem~\ref{thm:pp-evp-abstract-existence} we have
  \begin{equation*}
    \mu_1(bg(u))=\mu,
  \end{equation*}
  where we write $bg(u)$ as a short hand for the function $t\mapsto b(t)g(t,u(t))$. As before, given $m\in L^\infty((0,T),L^\infty(\Omega))$, we use the notation $\mu_1(m)$ for the principal periodic-parabolic eigenvalue of $\dot u+A(t)u+m(t)u=\mu u$, $u(0)=u(T)$.
\end{remark}
We next prove that any non-trivial positive solution to \eqref{eq:pp-logistic-abstract} is unique. The following tool is useful also for other purposes.
\begin{lemma}
  \label{lem:g-locally-lipschitz}
  Let $\xi_1,\xi_2\geq 0$. Then
  \begin{equation}
    \label{eq:g-locally-lipschitz}
    g(x,t,\xi_2)-g(x,t,\xi_1)=m(x,t,\xi_1,\xi_2)(\xi_2-\xi_1),
  \end{equation}
  where
  \begin{equation}
    \label{eq:ftc-g-positive}
    m(x,t,\xi_1,\xi_2):=\int_0^1\frac{\partial g}{\partial\xi}(x,t,\xi_1+s(\xi_2-\xi_1))\,ds
  \end{equation}
  and $m(x,t,\xi_1,\xi_2)>0$ for all $(x,t)\in \Omega\times[0,T]$.
\end{lemma}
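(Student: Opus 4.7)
The statement is a standard mean-value-type representation obtained by integrating $\partial g/\partial\xi$ along the segment from $\xi_1$ to $\xi_2$, so the plan is essentially to apply the fundamental theorem of calculus and use the strict positivity of $\partial g/\partial\xi$ guaranteed by Assumption~\ref{ass:g}.

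Concretely, I would fix $(x,t)\in\Omega\times[0,T]$ and $\xi_1,\xi_2\geq 0$ and introduce the auxiliary function
\begin{equation*}
  h(s):=g\bigl(x,t,\xi_1+s(\xi_2-\xi_1)\bigr),\qquad s\in[0,1].
\end{equation*}
Since $[0,1]\ni s\mapsto \xi_1+s(\xi_2-\xi_1)$ is a convex combination of two non-negative numbers, its image lies in $[0,\infty)$, so $h$ takes values in the domain on which Assumption~\ref{ass:g} provides the continuity of $\partial g/\partial\xi$. By the chain rule,
\begin{equation*}
  h'(s)=\frac{\partial g}{\partial\xi}\bigl(x,t,\xi_1+s(\xi_2-\xi_1)\bigr)\,(\xi_2-\xi_1),
\end{equation*}
and $h'$ is continuous on $[0,1]$, so the fundamental theorem of calculus applies.

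Integrating $h'$ from $0$ to $1$ yields
\begin{equation*}
  g(x,t,\xi_2)-g(x,t,\xi_1)=h(1)-h(0)=(\xi_2-\xi_1)\int_0^1\frac{\partial g}{\partial\xi}\bigl(x,t,\xi_1+s(\xi_2-\xi_1)\bigr)\,ds,
\end{equation*}
which is exactly \eqref{eq:g-locally-lipschitz} with the $m$ defined in \eqref{eq:ftc-g-positive}. Finally, since Assumption~\ref{ass:g} gives $\partial g/\partial\xi(x,t,\eta)>0$ pointwise on $\overline\Omega\times\mathbb R\times[0,\infty)$, the integrand in \eqref{eq:ftc-g-positive} is strictly positive on $[0,1]$, and hence $m(x,t,\xi_1,\xi_2)>0$.

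There is no real obstacle here: the argument is entirely routine, the only care needed being to note that the segment $\xi_1+s(\xi_2-\xi_1)$ remains in $[0,\infty)$ (so that the hypotheses on $g$ apply) and that continuity of $\partial g/\partial\xi$ legitimises the integral representation.
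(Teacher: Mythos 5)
Your proof is correct and follows essentially the same route as the paper: both apply the fundamental theorem of calculus along the segment $\xi_1+s(\xi_2-\xi_1)$ and then invoke the strict positivity of $\partial g/\partial\xi$ from Assumption~\ref{ass:g}. Your version simply spells out the auxiliary function $h$ and the chain rule more explicitly, which the paper leaves implicit.
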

\begin{proof}
  Given $\xi_1,\xi_1\geq 0$ the fundamental theorem of calculus implies that
  \begin{equation}
    \label{eq:ftc-g}
    \begin{split}
      g(x,t,\xi_2)-g(x,t,\xi_1)
       & =\int_0^1\frac{\partial g}{\partial\xi}(x,t,\xi_1+s(\xi_2-\xi_1))\,ds(\xi_2-\xi_1) \\ & =m(x,t,\xi_1,\xi_2)(\xi_2-\xi_1)
    \end{split}
  \end{equation}
  The assumptions on $g$ imply that $m(x,t,\xi_1,\xi_2)>0$ for all $(x,t)\in\Omega\times[0,T]$.
\end{proof}
\begin{proposition}[Uniqueness of solutions]
  \label{prop:pp-uniqueness}
  For every $\mu\in\mathbb R$, the periodic-parabolic problem \eqref{eq:pp-logistic-abstract} has at most one solution.
\end{proposition}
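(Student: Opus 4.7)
The plan is to assume for contradiction that \eqref{eq:pp-logistic-abstract} admits two positive solutions $u_1\neq u_2$ and derive a contradiction via the strict monotonicity of the principal periodic-parabolic eigenvalue established in Theorem~\ref{thm:pp-evp-abstract-existence}. The overarching idea is that $u_1$ itself certifies $\mu_1(bg(u_1))=\mu$ via Remark~\ref{rem:logistic-eigenvalue}, whereas the difference $z:=u_1-u_2$ satisfies a \emph{linear} periodic-parabolic eigenvalue equation whose potential strictly dominates $bg(u_1)$, which via strict monotonicity is impossible.

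First I would show that $z:=u_1-u_2\in W(0,T;V,V')$ with $z(0)=z(T)$ satisfies a linear periodic-parabolic equation. Using the telescoping identity $g(u_1)u_1-g(u_2)u_2=g(u_1)(u_1-u_2)+u_2(g(u_1)-g(u_2))$ together with Lemma~\ref{lem:g-locally-lipschitz} applied to the second summand, I would write
\begin{equation*}
  g(u_1)u_1-g(u_2)u_2=\bigl[g(u_1)+u_2\,m(\cdot,\cdot,u_2,u_1)\bigr]\,z,
\end{equation*}
where $m(\cdot,\cdot,u_2,u_1)>0$ pointwise on $\Omega\times[0,T]$ by Lemma~\ref{lem:g-locally-lipschitz}. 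Subtracting the equations for $u_1$ and $u_2$ yields
\begin{equation*}
  \dot z+A(t)z+M(t)z=\mu z,\qquad z(0)=z(T),
\end{equation*}
with potential $M(t):=b(t)\bigl[g(t,u_1(t))+u_2(t)\,m(\cdot,t,u_2(t),u_1(t))\bigr]$. Boundedness $M\in L^\infty((0,T),L^\infty(\Omega))$ follows from Lemma~\ref{lem:solution-bounded} together with the continuity of $\partial g/\partial\xi$ on the relevant compact set of values.

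The key step is then to compare two principal eigenvalues. If $z\not\equiv 0$, then by uniqueness for the linear Cauchy problem $z(0)\neq 0$, and Lemma~\ref{lem:U-ev-relation} shows that $\mu$ is an eigenvalue of the periodic-parabolic problem with potential $M$; the lower bound in Theorem~\ref{thm:pp-evp-abstract-existence} thus yields $\mu_1(M)\leq\mu$. On the other hand, $u_1>0$ is a positive $T$-periodic eigenfunction of $A(t)+b(t)g(t,u_1)$ at the eigenvalue $\mu$, so Remark~\ref{rem:logistic-eigenvalue} gives $\mu_1(bg(u_1))=\mu$. Combining these with the pointwise inequality $bg(u_1)\leq M$ and the strict monotonicity statement in Theorem~\ref{thm:pp-evp-abstract-existence} produces the contradiction $\mu=\mu_1(bg(u_1))<\mu_1(M)\leq\mu$, provided that $bg(u_1)\neq M$ on a set of positive measure. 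Since $u_2>0$ on $\Omega\times(0,T]$ by Lemma~\ref{lem:solution-bounded} and $m>0$ by Lemma~\ref{lem:g-locally-lipschitz}, this strictness reduces to $b\not\equiv 0$, which is the only interesting case.

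The main obstacle is essentially careful bookkeeping: tracking signs and pointwise positivity of all factors so that the strict inequality between potentials passes to a strict inequality between principal eigenvalues via Theorem~\ref{thm:pp-evp-abstract-existence}. The borderline case $b\equiv 0$ reduces \eqref{eq:pp-logistic-abstract} to a linear eigenvalue problem and lies outside the parameter range $(\mu_1(0),\mu^*(b))$ of Theorem~\ref{thm:main}, so it does not conflict with the uniqueness statement we actually need downstream.
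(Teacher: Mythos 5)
The proposal is correct and follows essentially the same route as the paper: form the difference $z=u_1-u_2$, apply Lemma~\ref{lem:g-locally-lipschitz} to write it as a $T$-periodic solution of a linear periodic-parabolic problem with potential $M=b[g(u_1)+u_2\,m(u_2,u_1)]$, and derive the contradiction $\mu=\mu_1(bg(u_1))<\mu_1(M)\leq\mu$ from the strict monotonicity in Theorem~\ref{thm:pp-evp-abstract-existence}. The only addition beyond the paper is your explicit remark on the degenerate case $b\equiv 0$, which the paper silently excludes; that is a fair observation since the strict inequality does rely on $b\not\equiv 0$, but it does not change the substance of the argument.
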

\begin{proof}
  Suppose that $u,v>0$ are two positive solutions of \eqref{eq:pp-logistic-abstract}. Taking into account Lemma~\ref{lem:g-locally-lipschitz} we see that $w:=v-u$, satisfies the equation
  \begin{equation*}
    \begin{aligned}
      \dot w+A(t)w+b(t)[g(t,v)+m(t,u,v)u]w & =\mu w &  & t\in[0,T] \\
      w(0)                                 & =w(T). &  &
    \end{aligned}
  \end{equation*}
  If $w\neq 0$ it follows that $\mu$ is a periodic-parabolic eigenvalue for the operators $(A(t)+b(t)[g(t,v)+m(t,u,v)v])_{t\in[0,T]}$. By \eqref{eq:ftc-g-positive} and Lemma~\ref{lem:solution-bounded} we have that
  \begin{equation*}
    b(t)[g(t,v)+m(t,u,v)u]>0
  \end{equation*}
  We also know from \eqref{eq:pp-logistic-abstract} that $\mu$ is a periodic-parabolic eigenvalue for the operators $(A(t)+b(t)+g(t,v(t)))_{t\in[0,T]}$ and thus by Theorem~\ref{thm:pp-evp-abstract-existence}
  \begin{equation*}
    \mu=\mu_1(b(t)g(t,v))<\mu_1(b[g(v)+m(u,v)u])\leq\mu.
  \end{equation*}
  This is impossible and thus $u=v$ as claimed.
\end{proof}
We next give a necessary condition for the existence of a positive solution.
\begin{proposition}[Necessary condition for existence]
  \label{prop:existence-necessary}
  Suppose that \eqref{eq:pp-logistic-abstract} has a non-trivial positive solution. Then $\mu_1(0)<\mu<\mu^*(b)$.
\end{proposition}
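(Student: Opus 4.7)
The plan is to use the characterisation $\mu=\mu_1(bg(u))$ from Remark~\ref{rem:logistic-eigenvalue}, and then squeeze $\mu$ between $\mu_1(0)$ and $\mu^*(b)$ using the strict monotonicity of $\mu_1$ established in Theorem~\ref{thm:pp-evp-abstract-existence}, applied to the pointwise bounds $0\le bg(u)\le M_0 b$ for a suitable $M_0$. Tacitly we work under the standing assumption that $b$ is not identically zero, since otherwise $\mu^*(b)=\mu_1(0)$ and the claimed interval is empty.

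First I would set up the basic regularity and positivity of $g(u)$. Let $u$ be a non-trivial positive solution of \eqref{eq:pp-logistic-abstract}. By Lemma~\ref{lem:solution-bounded} we have $u\in L^\infty((0,T),L^\infty(\Omega))$ and $u(x,t)>0$ for all $(x,t)\in\Omega\times(0,T]$. Since $g$ is continuous on $\overline\Omega\times\mathbb R\times[0,\infty)$ and $u$ is bounded, the composition $(x,t)\mapsto g(x,t,u(x,t))$ lies in $L^\infty((0,T),L^\infty(\Omega))$. The assumptions $g(x,t,0)=0$ and $\partial g/\partial\xi>0$ imply that $g(x,t,u(x,t))>0$ wherever $u(x,t)>0$, hence on $\Omega\times(0,T]$. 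In particular $bg(u)\in L^\infty((0,T),L^\infty(\Omega))$ and $bg(u)\ge 0$, with $bg(u)>0$ on the set $\{b>0\}\cap(\Omega\times(0,T])$, which has positive measure.

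For the lower bound, Remark~\ref{rem:logistic-eigenvalue} identifies $\mu=\mu_1(bg(u))$. Since $0\le bg(u)$ and the two functions differ on a set of positive measure, the strict monotonicity of $\mu_1$ from Theorem~\ref{thm:pp-evp-abstract-existence} gives
\begin{equation*}
  \mu_1(0)<\mu_1(bg(u))=\mu.
\end{equation*}
For the upper bound, choose $M_0>\|g(u)\|_{L^\infty(\Omega\times(0,T))}$. Then $bg(u)\le M_0 b$ pointwise a.e., and $bg(u)<M_0 b$ on $\{b>0\}$ so that $bg(u)\ne M_0 b$ on a set of positive measure. Applying strict monotonicity of $\mu_1$ again, followed by the definition \eqref{eq:mu-star} of $\mu^*(b)$,
\begin{equation*}
  \mu=\mu_1(bg(u))<\mu_1(M_0 b)\le\mu^*(b).
\end{equation*}

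I do not anticipate a real obstacle: the proof is essentially bookkeeping once Remark~\ref{rem:logistic-eigenvalue} is invoked. The only subtle point is making sure $g(u)$ is actually in $L^\infty$ so that $\mu_1(bg(u))$ and $\mu_1(M_0 b)$ are defined in the framework of Theorem~\ref{thm:pp-evp-abstract-existence}, and that the strict form of the monotonicity is available—both of which are already covered by the preceding results.
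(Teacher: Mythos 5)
Your proof is correct and follows essentially the same route as the paper: identify $\mu=\mu_1(bg(u))$, then use the strict monotonicity of $\mu_1$ from Theorem~\ref{thm:pp-evp-abstract-existence} to squeeze $\mu$ strictly between $\mu_1(0)$ and $\mu_1(\gamma b)$, and finish with the definition of $\mu^*(b)$. Your choice $M_0>\|g(u)\|_\infty$ (rather than the paper's $\gamma:=\|bg(\cdot,u)\|_\infty$, which as written does not immediately give $bg(u)\le\gamma b$) is in fact the cleaner way to justify the pointwise domination, and your explicit remark about $b\not\equiv 0$ clarifies a standing assumption the paper leaves implicit.
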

\begin{proof}
  We note that rearranging \eqref{eq:pp-logistic-abstract} we can write
  \begin{equation*}
    \begin{aligned}
      \dot{u}+A(t)u & + b(t)g(t,u)u =\mu u &  & t\in(0,T] \\
      u(0)          & = u(T),
    \end{aligned}
  \end{equation*}
  that is, $u$ is a principal eigenfunction corresponding to the eigenvalue $\mu_1(bg(\cdot\,,u))$. As $0<bg(\cdot\,,u)$ and $\gamma:=\|bg(\cdot\,,u)\|_\infty<\infty$, by Lemma~\ref{lem:solution-bounded}, we deduce from Theorem~\ref{thm:pp-evp-abstract-existence} that
  \begin{equation*}
    \mu_1(0)<\mu=\mu_1(bg(\cdot\,,u))\leq\mu_1(\gamma b)<\mu^*(b)
  \end{equation*}
  as claimed.
\end{proof}
We use the method of sub- and super-solutions to show that the necessary condtions for the existence of a positive solution of \eqref{eq:pp-logistic-abstract} in Proposition~\ref{prop:existence-necessary} is also sufficient. By a \emph{sub-solution} of \eqref{eq:pp-logistic-abstract} we mean a function $\underline{u}\in W(0,T;V,V')\cap L^\infty((0,T),L^\infty(\Omega))$ such that
\begin{equation}
  \label{eq:pp-subsolution}
  \begin{aligned}
    \dot{\underline{u}}+A(t)\underline{u} & \leq\mu \underline{u} - b(t)g(t,\underline{u})\underline{u} &  & t\in(0,T] \\
    \underline{u}(0)                      & \leq \underline{u}(T).
  \end{aligned}
\end{equation}
By a \emph{super-solution} of \eqref{eq:pp-logistic-abstract} we mean a function $\overline{u}\in W(0,T;V,V')\cap L^\infty((0,T),L^\infty(\Omega))$ such that
\begin{equation}
  \label{eq:pp-supersolution}
  \begin{aligned}
    \dot{\overline{u}}+A(t)\overline{u} & \geq\mu \overline{u} - b(t)g(t,\overline{u})\overline{u} &  & t\in(0,T] \\
    \overline{u}(0)                     & \geq \overline{u}(T).
  \end{aligned}
\end{equation}
We call $\underline{u}$ and $\overline{u}$ and ordered pair of sub- and super-solutions if $\underline{u}(0)\leq\overline{u}(0)$. These definitions are a more abstract version of \cite[Definition~21.1]{hess:91:ppb}. We show that a pair of ordered sub- and super-solutions is ordered on all $[0,T]$.

\begin{proposition}[Comparison]
  \label{prop:p-logistic-comparison}
  Suppose that $\underline{u}$ and $\overline{u}$ are an ordered pair of sub-and super-solutions of \eqref{eq:pp-logistic-abstract} and that $u,v$ are solutions of \eqref{eq:ivp-logistic-abstract} with $\underline u(0)\leq u(0)\leq v(0)\leq\overline u(0)$. Then $\underline{u}(t)\leq u(t)\leq v(t)\leq \overline{u}(t)$ for all $t\in[0,T]$.
\end{proposition}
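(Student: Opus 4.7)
The plan is to reduce each of the three inequalities $\underline{u}\le u$, $u\le v$ and $v\le\overline{u}$ to a single type of differential inequality, and then dispatch that with an energy/Gr\"onwall argument analogous to the one carried out in the proof of Theorem~\ref{thm:eigenfunction-comparison}. Specifically, I aim to show that whenever $w\in W(0,T;V,V')\cap L^\infty((0,T),L^\infty(\Omega))$ satisfies $w(0)\le 0$ together with a weak inequality of the form
\begin{equation*}
  \dot w + A(t)w + c(t)w \le \mu w \qquad t\in(0,T],
\end{equation*}
for some $c\in L^\infty((0,T),L^\infty(\Omega))$ with $c\ge 0$, then $w\le 0$ on $[0,T]$.

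To set up the reduction I would subtract the two (in)equations satisfied by the pair under comparison and linearise the nonlinearity via Lemma~\ref{lem:g-locally-lipschitz}. For instance, for $w:=\underline{u}-u$, the identity
\begin{equation*}
  g(t,\underline{u})\underline{u}-g(t,u)u
  =\bigl[g(t,\underline{u})+m(\cdot,u,\underline{u})\,u\bigr](\underline{u}-u)
\end{equation*}
yields $\dot w+A(t)w+c(t)w\le\mu w$ with $c(t):=b(t)[g(t,\underline{u})+m(\cdot,u,\underline{u})u]\ge 0$; since $\underline{u},u$ are essentially bounded (by assumption and by Lemma~\ref{lem:solution-bounded}), $c\in L^\infty((0,T),L^\infty(\Omega))$. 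The other two comparisons $u-v$ and $v-\overline{u}$ are treated by exactly the same algebraic trick, using solutions in place of sub/super-solutions where appropriate; in each case the initial condition inequality $w(0)\le 0$ is built into the hypothesis.

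The core step is then to test the differential inequality with $w^+$. Since $w\in L^2((0,T),V)$, so is $w^+$, and the chain-rule formula for the positive part (the same one used in Theorem~\ref{thm:eigenfunction-comparison}, see \cite[Lemma~3.3]{daners:00:hke}) gives $\langle\dot w,w^+\rangle=\tfrac{1}{2}\tfrac{d}{dt}\|w^+\|_H^2$. Combining this with Proposition~\ref{prop:form-properties}(iv) (so $\aaa(t,w,w^+)=\aaa(t,w^+,w^+)$), with $\langle c w,w^+\rangle=\langle cw^+,w^+\rangle\ge 0$, with $w^+(0)=0$, and with the coercivity estimate \eqref{eq:a-coercive}, I obtain
\begin{equation*}
  \tfrac{1}{2}\|w^+(t)\|_H^2
  +\tfrac{\alpha}{2}\int_0^t\|w^+(\tau)\|_V^2\,d\tau
  \le (\mu+\omega)\int_0^t\|w^+(\tau)\|_H^2\,d\tau
\end{equation*}
for all $t\in[0,T]$. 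Gr\"onwall's inequality then forces $w^+\equiv 0$, hence $w\le 0$.

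The main obstacle I anticipate is a technical rather than a conceptual one: making sure that $w^+$ is a legitimate test function in the variational (in)equality and that the chain rule for the $H$-norm of $w^+$ applies. Both facts are standard for functions in $W(0,T;V,V')$ once $V=H^1_{\mathcal B}(\Omega)$ is a lattice with $(u,v)\mapsto\aaa(t,u,v)$ having the truncation property recorded in Proposition~\ref{prop:form-properties}(iv); this is precisely the mechanism already exploited in Theorem~\ref{thm:eigenfunction-comparison}, so I would simply invoke the same reference. With that in hand, the three comparisons follow simultaneously from the single energy estimate above.
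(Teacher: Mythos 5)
Your proof is correct. The linearization step via Lemma~\ref{lem:g-locally-lipschitz} is identical to the paper's (the paper starts with $w:=\overline{u}-\underline{u}$, you with $w:=\underline{u}-u$, but this is cosmetic, and both handle the remaining comparisons ``by symmetry''). Where you part ways with the paper is in how the resulting linear inequality with a nonnegative potential $c\geq0$ is dispatched. The paper repackages the slack in the differential inequality as a nonnegative right-hand side $f\in L^2((0,T),V')$, so that $w$ becomes a solution of a linear \emph{equation} with $f\geq0$, $w(0)\geq0$, and then invokes the abstract positivity statement of Theorem~\ref{thm:linear-existence} (which itself rests on the truncation property of Proposition~\ref{prop:form-properties}(iv) via the cited invariance result). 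You instead keep the inequality, test directly with the admissible nonnegative test function $w^+$, and run the energy/Gr\"onwall argument from scratch using Proposition~\ref{prop:form-properties}(iv), the chain rule $\langle\dot w,w^+\rangle=\tfrac12\tfrac{d}{dt}\|w^+\|_H^2$, and coercivity \eqref{eq:a-coercive}. The two routes are the same mechanism: the paper's citation hides exactly this truncation-plus-Gr\"onwall argument. Your version has the minor advantage of being self-contained and of handling the differential \emph{inequality} directly (no need to argue that the slack is a well-defined nonnegative element of $L^2((0,T),V')$), at the cost of a few lines of explicit computation that the paper delegates to an already-established result. One small point worth stating explicitly in your write-up: in the two comparisons involving $\overline{u}$, boundedness of $\overline{u}$ comes from the definition of super-solution (membership in $L^\infty((0,T),L^\infty(\Omega))$), not from Lemma~\ref{lem:solution-bounded}, so $c\in L^\infty$ in every case for slightly different reasons.
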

\begin{proof}
  Setting $w:=\overline{u}-\underline{u}$ we can use \eqref{eq:g-locally-lipschitz} to write
  \begin{equation*}
    g(t,\overline{u})\overline{u}-g(t,\underline{u})\underline{u}
    =\left[m(t,\underline{u},\overline{u})\underline{u}+g(t,\overline{u})\right]w
  \end{equation*}
  Subtracting \eqref{eq:pp-subsolution} from \eqref{eq:pp-supersolution} we see that
  \begin{equation*}
    \begin{aligned}
      \dot w+A(t)w-\mu w+[g(t,\overline{u})+m(t,\underline u,\overline u)\underline{u}]w
                                            & =:f(t)\geq 0
                                            &              & t\in(0,T]   \\
      \overline{u}(0)-\underline{u}(0)=w(0) & \geq 0       &           &
    \end{aligned}
  \end{equation*}
  Since $g(t,\overline{u})+m(t,\underline u,\overline u)\underline{u}\in L^\infty((0,T),L^\infty(\Omega))$ and $f\in L^2((0,T),V')$ it follows from Theorem~\ref{thm:linear-existence} that $w\geq 0$, that is, $\underline{u}(t)\leq \overline{u}(t)$ for all $t\in[0,T]$. A similar argument holds if we replace the pair $\underline{u}$ and $\overline{u}$ by any combination of $\underline{u}$, $u$, $v$,and $\overline{u}$ with difference of initial conditions positive. This concludes the proof of the proposition.
\end{proof}
Knowing that solutions, if they exist, are trapped between sub- and super-solutions allows us to show the existence of a solution to the initial value problem \eqref{eq:ivp-logistic-abstract} for any initial condition between $\underline{u}(0)$ and $\overline{u}(0)$. We let
\begin{equation*}
  \tilde g(x,t,\xi):=\min\{g(x,t,\xi),g(x,t,\|\overline{u}\|_\infty)\}
\end{equation*}
for all $(x,t,\xi)\in\Omega\times\mathbb R\times[0,\infty)$. We also extend the function by zero for $\xi<0$. Taking into account \eqref{eq:g-locally-lipschitz} it follows that $\xi\to\tilde g(x,t,\xi)\xi$ is Lipschitz continuous on $\mathbb R$ uniformly with respect to $(x,t)\in\Omega\times[0,T]$. It then follows that we have a substitution operator $F:H\to H$ given by
\begin{equation*}
  F(t,u)(x,t):=\tilde g(x,t,u(x,t))u(x,t)
\end{equation*}
for any function $u\colon\Omega\times[0,T]\to\mathbb R$. Then it is easily checked that $F\in C(\mathbb R\times H,H)$ is Lipschitz continuous uniformly with respect to $t\in\mathbb R$. It follows that for every $u_0\in L^\infty(\Omega)$ the equation
\begin{equation*}
  \begin{aligned}
    \dot u+A(t)u-\mu u & =F(t,u) &  & t\geq 0 \\
    u(0)               & =u_0    &  &
  \end{aligned}
\end{equation*}
has a unique global (mild) solution, see for instance \cite[Section~16]{daners:92:aee} (the proofs work under our assumptions) or \cite[Theorem~4.5]{daners:05:pse}. Due to Proposition~\ref{prop:p-logistic-comparison} that solution coincides with the solution of \eqref{eq:ivp-logistic-abstract} for all $t\geq 0$ if $\underline{u}(0)\leq u_0\leq\overline{u}(0)$. A mild solution is a function $u\in C((0,T),H)$ such that
\begin{equation*}
  u(t)=U(t,0)u_0+\int_0^tU(t,\tau)(\mu u(\tau)-F(\tau,u(\tau))\,d\tau
\end{equation*}
for all $t\geq 0$. By Proposition~\ref{prop:evolution-system} this solution is also in $W(0,T;V,V')$.

We next construct a sub-solution for \eqref{eq:pp-logistic-abstract} in the same way as done in the proof of \cite[Theorem~28.1]{hess:91:ppb}.
\begin{lemma}[Existence of sub-solution]
  \label{lem:existence-subsolution}
  Suppose that $\mu>\mu_1(0)$ and let $\varphi_0$ be the principal periodic-parabolic eigenfunction as defined in Theorem~\ref{thm:pp-evp-abstract-existence}. Then there exists $\varepsilon_0>0$ such that $\varepsilon\varphi_0$ is a subsolution of \eqref{eq:pp-logistic-abstract} for every $\varepsilon\in(0,\varepsilon_0)$.
\end{lemma}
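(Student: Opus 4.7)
The plan is to take $\underline{u}:=\varepsilon\varphi_0$ as the candidate sub-solution and verify the two conditions in \eqref{eq:pp-subsolution} directly using the eigenvalue equation satisfied by $\varphi_0$.

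First, recall that $\varphi_0$ is the principal periodic-parabolic eigenfunction for $b\equiv 0$, so it satisfies $\dot\varphi_0+A(t)\varphi_0=\mu_1(0)\varphi_0$ together with the periodicity $\varphi_0(0)=\varphi_0(T)$. Multiplying through by $\varepsilon>0$ gives
\begin{equation*}
  \dot{\underline u}+A(t)\underline u-\mu\underline u+b(t)g(t,\underline u)\underline u
  =\bigl(\mu_1(0)-\mu+b(t)g(t,\varepsilon\varphi_0)\bigr)\varepsilon\varphi_0,
\end{equation*}
and the periodicity condition $\underline u(0)=\underline u(T)$ is automatic. Since $\varepsilon\varphi_0>0$, verifying the sub-solution differential inequality reduces to showing that we can choose $\varepsilon_0>0$ such that
\begin{equation*}
  b(x,t)g\bigl(x,t,\varepsilon\varphi_0(x,t)\bigr)\leq \mu-\mu_1(0)
  \qquad\text{a.e.\ on }\Omega\times[0,T]
\end{equation*}
for all $\varepsilon\in(0,\varepsilon_0)$. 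Note $\mu-\mu_1(0)>0$ by hypothesis.

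Next I establish the uniform smallness needed. By Proposition~\ref{prop:es-ev-monotonicity}(iv) applied with $\gamma=0$, $\varphi_0\in C(\Omega\times[0,T])$; combining the representation $\varphi_0(t)=e^{\mu_1(0)t}U_0(t,0)\varphi_0(0)$ with the smoothing estimate \eqref{eq:2-infty-estimate} and the $L^\infty$-boundedness (U6) of the evolution system gives $\varphi_0\in L^\infty(\Omega\times[0,T])$. Set $M:=\varepsilon_1\|\varphi_0\|_\infty$ for some fixed $\varepsilon_1>0$. Then on the compact set $\overline\Omega\times[0,T]\times[0,M]$ the function $g$ is uniformly continuous, and since $g(x,t,0)=0$ it follows that $g(x,t,\xi)\to 0$ as $\xi\to 0^+$ uniformly in $(x,t)$. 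Consequently $\|g(\cdot\,,\cdot,\varepsilon\varphi_0)\|_\infty\to 0$ as $\varepsilon\to 0^+$. Pick $\varepsilon_0\in(0,\varepsilon_1]$ small enough that
\begin{equation*}
  \|b\|_\infty\,\|g(\cdot\,,\cdot,\varepsilon\varphi_0)\|_\infty<\mu-\mu_1(0)
\end{equation*}
for every $\varepsilon\in(0,\varepsilon_0)$; this is the desired inequality.

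Finally, one should check the regularity $\underline u\in W(0,T;V,V')\cap L^\infty((0,T),L^\infty(\Omega))$, but this is immediate: $\varphi_0\in W(0,T;V,V')$ by definition of a principal eigenfunction, and we have already observed $\varphi_0\in L^\infty$. There is no real obstacle in this argument; the only delicate point is ensuring $\varphi_0$ is genuinely bounded so that the uniform continuity of $g$ gives uniform control of $g(\cdot\,,\varepsilon\varphi_0)$, and this follows at once from the smoothing property of $U_0$ applied to the $L^2$-normalised $\varphi_0(0)$.
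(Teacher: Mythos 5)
Your proposal is correct and follows essentially the same approach as the paper: rearrange the eigenvalue equation for $\varphi_0$ into the sub-solution form, and reduce the verification to $b\,g(\cdot,\varepsilon\varphi_0)\leq\mu-\mu_1(0)$, which holds for small $\varepsilon$ because $g(\cdot,\cdot,0)=0$ and $\varphi_0$ is bounded. The paper phrases the smallness via monotonicity of $g$ ($0\leq g(t,\varepsilon\varphi_0)\leq g(t,\varepsilon\|\varphi_0\|_\infty)\to 0$) while you invoke uniform continuity on a compact set, but these are the same argument in substance; your extra remarks on boundedness and $W$-regularity of $\varphi_0$ just make explicit what the paper leaves implicit.
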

\begin{proof}
  For $\varepsilon>0$ we have that
  \begin{align*}
    \frac{d}{dt}(\varepsilon\varphi_0)
     & + A(t)(\varepsilon\varphi_0) = \mu_{1}(0)(\varepsilon\varphi_0)                                                                                                          \\
     & = \mu(\varepsilon\varphi_0) - b(t)g(t,\varepsilon\varphi_0)(\varepsilon\varphi_0) + \left[\mu_{1}(0) - \mu + b(t)g(t,\varepsilon\varphi_0)\right](\varepsilon\varphi_0).
  \end{align*}
  By assumption $\mu_1(0)-\mu<0$. Moreover, by assumption on $g$ and Proposition~\ref{prop:es-ev-monotonicity} we have that
  \begin{equation*}
    0\leq g(t,\varepsilon\varphi_0)
    \leq g(t,\varepsilon \|\varphi_0\|_\infty)
    \to 0
  \end{equation*}
  as $\varepsilon\to 0$ uniformly with respect to $(x,t)\in\Omega\times(0,T)$. Hence, there exists $\varepsilon_0>0$ such that
  \begin{equation*}
    \mu_{1}(0) - \mu + b(t)g(t,\varepsilon\varphi_0)<0
  \end{equation*}
  for all $\varepsilon\in(0,\varepsilon_0)$. Therefore,
  \begin{equation*}
    \frac{d}{dt}(\varepsilon\varphi_0) + A(t)(\varepsilon\varphi_0)
    <\mu(\varepsilon\varphi_0) - b(t)g(t,\varepsilon\varphi_0)(\varepsilon\varphi_0)
  \end{equation*}
  for all $\varepsilon\in(0,\varepsilon_0)$.
\end{proof}
We next prove the existence of a supersolution for $\mu<\mu^*(b)$. The construction follows the idea of \cite[Proposition~3.1]{daners:18:gdg} or \cite[Proposition~7.8]{arendt:23:see}. The construction is different from that in \cite[Section~5]{aleja:21:wpp} which makes use of more regularity properties.
\begin{proposition}[Existence of super-solution]
  \label{prop:existence-supersolution}
  Suppose that $\mu<\mu^*(b)$. For $\delta>0$ define
  \begin{equation*}
    \Omega_\delta:=\{x\in\Omega\colon\dist(x,\partial\Omega)\geq\delta\}
  \end{equation*}
  and let $b_\delta:=1_{\Omega_\delta}b$. Then the following statements hold.
  \begin{enumerate}[\normalfont (i)]
  \item There exists $\gamma>0$ such that $\mu<\mu_1(\gamma b_\delta)<\mu^*(b)$.
  \item If $\psi$ is the principal eigenfunction associated with $\mu_1(\gamma b_\delta)$ from \upshape{(i)}, then there exists $\kappa_0>0$ such that $\kappa\psi$ is a super-solution of \eqref{eq:pp-logistic-abstract} for all $\kappa\geq\kappa_0$.
  \end{enumerate}
\end{proposition}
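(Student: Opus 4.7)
My plan is twofold. For (i), I will exploit the interplay between strict monotonicity of $\mu_1$ in both $\gamma$ and the weight (Theorem~\ref{thm:pp-evp-abstract-existence}) and the stability of $\mu_1$ under the cutoff approximation $b_\delta\to b$ from Remark~\ref{rem:cutoff-convergence} and Corollary~\ref{cor:spectral-radius-convergence}. For (ii), the natural candidate super-solution is a large multiple of the principal eigenfunction $\psi$ of $\dot u+A(t)u+\gamma b_\delta(t)u=\mu_1(\gamma b_\delta)u$, which I will verify by a direct pointwise computation using the uniform blow-up assumption on $g$ together with the positivity of $\psi$.

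For (i), since $\mu<\mu^*(b)=\lim_{\gamma\to\infty}\mu_1(\gamma b)$ and $\gamma\mapsto\mu_1(\gamma b)$ is strictly increasing, I first fix $\gamma_0>0$ with $\mu_1(\gamma_0 b)>\mu$. Remark~\ref{rem:cutoff-convergence} gives $\gamma_0 b_\delta\stackrel{*}{\rightharpoonup}\gamma_0 b$ as $\delta\to 0^+$, and then Corollary~\ref{cor:spectral-radius-convergence} yields $\mu_1(\gamma_0 b_\delta)\to\mu_1(\gamma_0 b)$, so I can take $\delta>0$ small enough that $\mu_1(\gamma_0 b_\delta)>\mu$. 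For the upper bound I will not try to compare $b_\delta$ with $b$ at the same $\gamma$ (that need not be strict, e.g.\ if $b$ happens to vanish on $\Omega\setminus\Omega_\delta$), but rather use strict monotonicity in $\gamma$ at the weight $b_\delta$ itself:
$$\mu_1(\gamma_0 b_\delta)<\mu_1(2\gamma_0 b_\delta)\leq\mu_1(2\gamma_0 b)\leq\mu^*(b).$$
Hence $\gamma:=\gamma_0$ satisfies (i).

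For (ii), the eigenvalue equation gives $\dot{(\kappa\psi)}+A(t)(\kappa\psi)=[\mu_1(\gamma b_\delta)-\gamma b_\delta(t)]\kappa\psi$ for every $\kappa>0$. Since $\psi>0$, dividing the super-solution inequality by $\kappa\psi$ reduces it to the pointwise estimate
$$b(t)g(t,\kappa\psi)-\gamma b_\delta(t)\geq\mu-\mu_1(\gamma b_\delta).$$
The right-hand side is strictly negative by (i), so it suffices to show the stronger inequality $b(t)g(t,\kappa\psi)\geq\gamma b_\delta(t)$. On $\Omega\setminus\Omega_\delta$ this is automatic since $b_\delta\equiv 0$ there. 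On $\Omega_\delta$, where $b_\delta=b$, it reduces to $b(t)[g(t,\kappa\psi)-\gamma]\geq 0$, so it is enough to prove $g(t,\kappa\psi)\geq\gamma$ uniformly on $\Omega_\delta\times[0,T]$. By Proposition~\ref{prop:es-ev-monotonicity}(iv), $\psi$ is continuous and strictly positive on $\Omega\times[0,T]$ and hence attains a positive minimum $\eta>0$ on the compact cylinder $\overline{\Omega_\delta}\times[0,T]\subseteq\Omega\times[0,T]$; combined with the monotonicity and uniform blow-up of $g$ from Assumption~\ref{ass:g}, I can pick $\kappa_0>0$ so that $g(x,t,\kappa\eta)\geq\gamma$ for all $\kappa\geq\kappa_0$ and $(x,t)\in\Omega_\delta\times[0,T]$. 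Finally $\kappa\psi(0)=\kappa\psi(T)$ by $T$-periodicity of $\psi$, so the end-time condition of the super-solution definition holds with equality.

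The main obstacle, I expect, is the strict upper bound $\mu_1(\gamma b_\delta)<\mu^*(b)$ in (i). A direct weight comparison at fixed $\gamma$ may fail to be strict, so the key move is to insert a factor of two in $\gamma$ and pick up strictness from Theorem~\ref{thm:pp-evp-abstract-existence}. Part (ii) is essentially computational once $\kappa\psi$ is committed to as the candidate; the only mildly delicate point is that the locally uniform growth of $g$ has to be applied on the compact set $\overline{\Omega_\delta}\times[0,T]$ rather than on $\Omega\times[0,T]$ itself, because $\psi$ need not be bounded away from zero near $\partial\Omega$.
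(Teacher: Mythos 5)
Your proposal is correct and follows essentially the same route as the paper: fix $\gamma$ with $\mu_1(\gamma b)>\mu$, use the cutoff convergence $\mu_1(\gamma b_\delta)\to\mu_1(\gamma b)$ from Remark~\ref{rem:cutoff-convergence} and Corollary~\ref{cor:spectral-radius-convergence} to secure the lower bound for small $\delta$, and for (ii) exploit positivity of $\psi$ on the compact cylinder $\Omega_\delta\times[0,T]$ together with the locally uniform blow-up of $g$ to make $b(t)g(t,\kappa\psi)-\gamma b_\delta(t)\geq 0$. The only small deviation is your $2\gamma_0$ device to secure the strict upper bound in (i); the paper's intended argument is $\mu_1(\gamma b_\delta)\leq\mu_1(\gamma b)<\mu^*(b)$, using strict monotonicity of $\gamma\mapsto\mu_1(\gamma b)$, and the two are interchangeable.
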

\begin{proof}
  We start by proving (i). By definition of $\mu^*(b)$ we can choose $\gamma>0$ such that $\mu<\mu_1(\gamma b)$. It follows from Corollary~\ref{cor:spectral-radius-convergence}, Remark~\ref{rem:cutoff-convergence} and Lemma~\ref{lem:U-ev-relation} that $\mu_1(\gamma b)\leq \mu_1(\gamma b_\delta)$ and $\mu_1(\gamma b_\delta)\to\mu_1(\gamma b)$ as $\delta\to 0$. Hence there exists $\delta>0$ such that $\mu<\mu_1(\gamma b_\delta)<\mu^*(b)$.

  To prove (ii) let $\psi>0$ be a positive eigenfunction corresponding to $\mu_1(\gamma b_\delta)$. note that since $\mu<\mu_1(\gamma b_\delta)$ we have that
  \begin{align*}
    \frac{d}{dt}(\kappa\psi)
     & + A(t)(\kappa\psi) = \mu_1(\gamma b_\delta)(\kappa\psi)-\gamma b_\delta(t)(\kappa\psi) \\
     & >\mu(\kappa\psi)-b(t)g(t,\kappa\psi)(\kappa\psi)
    +\left[b(t)g(t,\kappa\psi)-\gamma b_\delta(t)\right](\kappa\psi).
  \end{align*}
  On $\Omega\setminus\Omega_\delta$ we have that
  \begin{equation*}
    b(t)g(t,\kappa\psi)-\gamma b_\delta(t)=b(t)g(t,\kappa\psi)\geq 0.
  \end{equation*}
  Since $\Omega_\delta\subseteq\Omega$ is compact we deduce from Proposition~\ref{prop:es-ev-monotonicity}(iv) that there exists $c>0$ such that $\psi(x,t)\geq c$ for all $x\in\Omega_\delta\times[0,T]$. Hence on $\Omega_\Omega$ we have $g(t,\kappa\psi)\geq g(t,\kappa c)\to\infty$ uniformly with respect to $(x,t)\in\Omega_\delta\times(0,T)$ Hence there exists $\kappa_0>0$ such that
  \begin{equation*}
    b(t)g(t,\kappa\psi)-\gamma b_\delta(t)>0
  \end{equation*}
  on $\Omega_\delta$ for all $\kappa\geq\kappa_0$. It follows that
  \begin{equation*}
    \frac{d}{dt}(\kappa\psi) + A(t)(\kappa\psi)
    >\mu(\kappa\psi)-b(t)g(t,\kappa\psi)(\kappa\psi)
  \end{equation*}
  for all $\kappa>\kappa_0$, showing that $\psi$ is a super-solution of \eqref{eq:pp-logistic-abstract}.
\end{proof}
There is no guarantee that the sub and super-solutions constructed above can be used to get an ordered pair. Usually the ordering is achieved by using $C^1$-regularity of the solutions and the Hopf boundary maximum principle. The key to overcome this restrictions on regularity is the eigenfunction comparison Theorem~\ref{thm:eigenfunction-comparison}. We use a monotone iteration scheme similar to \cite[Section~1 and~21]{hess:91:ppb}.
\begin{theorem}
  \label{thm:existence-of-periodic-solution}
  Let $\mu\in\left(\mu_{1}(0), \mu^{*}(b)\right)$ then there exists a unique non-trivial positive weak solution of \eqref{eq:pp-logistic}. Moreover, that solution is linearly stable.
\end{theorem}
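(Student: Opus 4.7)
The plan is to apply the method of monotone iteration between an ordered pair of sub- and super-solutions, convert the resulting fixed point of the Poincar\'e map into a $T$-periodic solution, and finally derive linear stability from the strict monotonicity of the principal periodic-parabolic eigenvalue.

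First, I would produce an \emph{ordered} pair of sub- and super-solutions. Lemma~\ref{lem:existence-subsolution} gives that $\varepsilon\varphi_0$ is a sub-solution for every sufficiently small $\varepsilon>0$. Proposition~\ref{prop:existence-supersolution} gives $\gamma,\delta>0$ with $\mu<\mu_1(\gamma b_\delta)<\mu^*(b)$ and a principal eigenfunction $\psi>0$ for $\mu_1(\gamma b_\delta)$ such that $\kappa\psi$ is a super-solution for all $\kappa\geq\kappa_0$. Because $b_\delta=1_{\Omega_\delta}b$ has compact support in $\Omega\times[0,T]$ and $\mu_1(0)\leq\mu_1(\gamma b_\delta)$, Theorem~\ref{thm:eigenfunction-comparison} supplies a constant $C>0$ with $\varphi_0\leq C\psi$ throughout $\Omega\times[0,T]$. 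Shrinking $\varepsilon$ so that $\varepsilon C\leq\kappa$ yields $\varepsilon\varphi_0\leq\kappa\psi$, i.e.\ an ordered pair. This step is where the eigenfunction comparison theorem replaces the usual $C^1$/Hopf argument.

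Second, I would run a monotone iteration via the Poincar\'e operator $P\colon u_0\mapsto u(T;u_0)$ attached to the initial-value problem~\eqref{eq:ivp-logistic-abstract}, which is well defined on $[\varepsilon\varphi_0(0),\kappa\psi(0)]$ by the cut-off/mild solution construction after Proposition~\ref{prop:p-logistic-comparison}. By that same proposition $P$ is order-preserving on this interval, and the super-/sub-solution properties give $P(\kappa\psi(0))\leq\kappa\psi(0)$ and $P(\varepsilon\varphi_0(0))\geq\varepsilon\varphi_0(0)$. Hence $v_n:=P^n(\kappa\psi(0))$ is non-increasing and bounded below by $\varepsilon\varphi_0(0)>0$, so it converges monotonically to some $u_0^*\geq\varepsilon\varphi_0(0)$ a.e.\ and, by dominated convergence, in $L^2(\Omega)$. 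The corresponding solutions $u_n(\cdot):=u(\cdot;v_n)$ are uniformly bounded in $L^\infty((0,T),L^\infty(\Omega))$ by Lemma~\ref{lem:solution-bounded}, so the a priori estimate of Proposition~\ref{prop:a-priori-estimates} keeps them bounded in $W(0,T;V,V')$. The perturbation theorem~\ref{thm:linear-perturbation}, applied with $m_n(t):=b(t)g(t,u_n)$ (which is bounded in $L^\infty$ and converges weak$^*$ to $b(t)g(t,u^*)$ by continuity of $g$ and the monotone pointwise convergence of $u_n$), allows passage to the limit and produces a weak solution $u^*\in W(0,T;V,V')$ of \eqref{eq:pp-logistic-abstract} satisfying $u^*(0)=u^*(T)=u_0^*$. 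Non-triviality and positivity are ensured by $u^*\geq\varepsilon\varphi_0>0$, and uniqueness is Proposition~\ref{prop:pp-uniqueness}.

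Third, for linear stability I would linearise \eqref{eq:pp-logistic-abstract} at $u^*$: since $\partial_\xi[g(t,\xi)\xi]=g(t,\xi)+\xi\,\partial_\xi g(t,\xi)$, the linearised operator family is
\begin{equation*}
\tilde A(t):=A(t)+b(t)\bigl[g(t,u^*)+u^*\,\partial_\xi g(t,u^*)\bigr]-\mu.
\end{equation*}
Linear stability amounts to strict positivity of the principal periodic-parabolic eigenvalue of $\tilde A(t)$, i.e.\ $\mu_1(m_1)>\mu$ where $m_0:=b\,g(u^*)$ and $m_1:=m_0+b\,u^*\,\partial_\xi g(u^*)$. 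Remark~\ref{rem:logistic-eigenvalue} gives $\mu_1(m_0)=\mu$. By Lemma~\ref{lem:solution-bounded} we have $u^*(x,t)>0$ on $\Omega\times(0,T]$, and $\partial_\xi g>0$ by Assumption~\ref{ass:g}, so $m_1-m_0=b\,u^*\,\partial_\xi g(u^*)$ is strictly positive on the positive-measure set $\{b>0\}$. The strict-monotonicity clause of Theorem~\ref{thm:pp-evp-abstract-existence} then forces $\mu_1(m_1)>\mu_1(m_0)=\mu$, giving linear stability.

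The main obstacle is the second step: extracting a \emph{weak} solution in $W(0,T;V,V')$ from the monotone pointwise limit $u_n\downarrow u^*$ despite the nonlinear term. The device that makes this work is the uniform $L^\infty$-bound from Lemma~\ref{lem:solution-bounded} combined with the weak$^*$ stability provided by Theorem~\ref{thm:linear-perturbation}, which allows one to view the nonlinearity as a perturbation with weak$^*$-convergent coefficient and pass to the limit without any additional regularity of $\partial\Omega$ or of the coefficients.
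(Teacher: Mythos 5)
Your argument is correct and follows the paper's proof very closely: ordered sub- and super-solutions via Lemma~\ref{lem:existence-subsolution}, Proposition~\ref{prop:existence-supersolution} and the eigenfunction-comparison Theorem~\ref{thm:eigenfunction-comparison}, a monotone Poincar\'e-map iteration converging to a fixed point, uniqueness from Proposition~\ref{prop:pp-uniqueness}, and linear stability from the strict monotonicity of $\mu_1$ in Theorem~\ref{thm:pp-evp-abstract-existence}. The only deviations are cosmetic --- you iterate downwards from the super-solution and pass to the limit via Theorem~\ref{thm:linear-perturbation} with weak$^*$-convergent potentials, whereas the paper iterates upwards from the sub-solution and passes to the limit in the variation-of-constants formula by dominated convergence --- and both routes are valid.
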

\begin{proof}
  Let $\varepsilon\varphi$ and $\kappa\psi$ be the sub and super-solutions constructed in Lemma~\ref{lem:existence-subsolution} and Proposition~\ref{prop:existence-supersolution} for $\varepsilon\in(0,\varepsilon_0]$ and $\kappa\geq\kappa_0$. Since $b_\delta$ has compact support and $\mu_1(0)<\mu<\mu_1(\gamma b_\delta)$, Theorem~\ref{thm:eigenfunction-comparison} implies the existence of $\kappa_1\geq\kappa_0$ such that $\varepsilon\varphi\leq\kappa\psi$ for all $\varepsilon\in(0,\varepsilon_0]$ and all $\kappa\geq\kappa_1$.

  Fix a pair of sub and super-solutions $\underline u$ and $\overline u$. Let $w$ be the solution of \eqref{eq:ivp-logistic-abstract} with initial condition $\underline u(t)$. Then it follows by Proposition~\ref{prop:p-logistic-comparison} and induction that
  \begin{equation}
    \label{eq:iteration-bound}
    0<\underline{u}(t)
    \leq w(t+nT)
    \leq w(t+(n+1)T)
    \leq \overline{u}(t)
    \leq\|\overline u\|_\infty
  \end{equation}
  for all $n\in\mathbb N$. In particular
  \begin{equation*}
    u(t):=\lim_{n\to\infty}w(t+nT)
  \end{equation*}
  exists as a pointwise limit for all $t\in[0,T]$. The monotone convergence theorem implies convergence in $L^2((0,T),\Omega)$. Also, by the variation-of-constants formula and the $T$-periodicity
  \begin{align*}
    w( & (n+1)T)=U(T,0)w(nT)                                                              \\
       & \qquad+\int_0^TU(T,\tau)[\mu w(\tau+nT)-b(t)g(\tau,w(\tau+nT))]w(\tau+nT)\,d\tau
  \end{align*}
  for all $n\in\mathbb N$ and $t\in[0,T]$. By letting $n\to\infty$ and the dominated convergence theorem we have
  \begin{equation*}
    u(0)=U(T,0)+\int_0^TU(T,\tau)[\mu u(\tau)-b(t)g(\tau,u(\tau))]u(\tau)\,d\tau.
  \end{equation*}
  This shows that $u$ is a solution of \eqref{eq:pp-logistic-abstract}.

  For the linear stability we consider linearize the equation about $u_\mu$. That linearization can be written in the form
  \begin{displaymath}
    \begin{aligned}
      \frac{dw}{dt}+[A(t)-\mu+bg(x,t,u_\mu)]w+\frac{\partial g}{\partial\xi}g(x,t,u_\mu)u_\mu w&=0&&\text{for }t>0\\
      w(0)&=w_0.
    \end{aligned}
  \end{displaymath}
  We know that $\mu_1(-\mu+bg(u_\mu))=0$. Since $u_\mu(x,t)>0$ for all $(x,t)\in\Omega\times\mathbb R$, $g$ is strictly increasing and $b\neq 0$ it follows from \ref{prop:es-ev-monotonicity} that
  \begin{displaymath}
    \mu_1\left(-\mu+bg(u_\mu)+\frac{\partial g}{\partial\xi}g(u_\mu)u_\mu\right)>0.
  \end{displaymath}
  Hence, the linear stability follows from \cite[Theorem~22.2]{daners:92:aee}.
\end{proof}

\section{Smoothness with with respect to the parameter }
\label{sec:smoothness}
The non-trivial positive solution to \eqref{eq:pp-logistic-abstract} has some desirable properties with respect to the parameter $\mu$. We continue tho work with the same assumptions and framework as in Section~\ref{sec:logistic}. We have seen in Theorem~\ref{thm:existence-of-periodic-solution} that \eqref{eq:pp-logistic-abstract} has a unique solultion if and only if $\mu\in(\mu_{1}(0),\mu^{*}(b))$. One key property is the monotonicity.
\begin{theorem}
  \label{thm:smoothness-in-mu}
  Let $\mu\in(\mu_{1}(0),\mu^{*}(b))$ and let $u_\mu$ be the unique positive solution of \eqref{eq:pp-logistic-abstract}. Then $[\mu\mapsto u_\mu]\in C^1\bigl((\mu_1(0),\mu^*(b)),W(0,T;V,V')\bigr)$ and $u_{\mu}$ is increasing as a function of $\mu\in(\mu_{1}(0),\mu^{*}(b))$. Moreover, the derivative $v_{\mu}:=\frac{du_\mu}{d\mu}$ is the unique solution
  \begin{equation}
    \label{eq:linearized-logistic-equation}
    \begin{aligned}
      \dot{v}_{\mu} + A(t)v_{\mu} + b(t)\left[g(t,u_{\mu}) + \frac{\partial g}{\partial\xi}(t,u_{\mu})u_{\mu}\right]v_{\mu} - \mu v_{\mu} & = u_{\mu} &  & t\in(0,T], \\ v_{\mu}(0) & = v_{\mu}(T).
    \end{aligned}
  \end{equation}
\end{theorem}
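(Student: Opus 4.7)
The plan is to derive the theorem from the implicit function theorem applied to the map
\begin{equation*}
  F(\mu,u):=\dot u+A(\cdot)u-\mu u+b(\cdot)g(\cdot,u)u
\end{equation*}
defined on $J\times X$ with values in $Y$, where $J:=(\mu_1(0),\mu^*(b))$, $X$ is the closed subspace of $T$-periodic elements in $W(0,T;V,V')$, and $Y:=L^2((0,T),V')$. Since $g$ is only assumed continuous with no growth bound, the Nemytskii operator $u\mapsto g(u)u$ is not naturally defined on all of $X$. First I would fix $\mu_0\in J$, set $M:=\|u_{\mu_0}\|_\infty+1$ (using Lemma~\ref{lem:solution-bounded}), and modify $g$ outside $[0,M]$ to a function $\tilde g$ that agrees with $g$ on $[0,M]$ and is globally bounded with bounded continuous $\xi$-derivative, exactly as in the proof of Theorem~\ref{thm:existence-of-periodic-solution}. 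With $\tilde F$ defined using $\tilde g$ in place of $g$, the map $\tilde F:J\times X\to Y$ is of class $C^1$ and $\tilde F(\mu_0,u_{\mu_0})=0$.

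The crucial step is to verify that $D_u\tilde F(\mu_0,u_{\mu_0})$ is an isomorphism from $X$ onto $Y$. A direct computation gives
\begin{equation*}
  D_u\tilde F(\mu_0,u_{\mu_0})h=\dot h+A(t)h+m(t)h,\qquad m:=b\Bigl[g(u_{\mu_0})+\frac{\partial g}{\partial\xi}(u_{\mu_0})u_{\mu_0}\Bigr]-\mu_0.
\end{equation*}
Since $u_{\mu_0}$ is a positive eigenfunction of $\dot u+A(t)u+(bg(u_{\mu_0})-\mu_0)u=0$, Remark~\ref{rem:logistic-eigenvalue} gives $\mu_1(bg(u_{\mu_0})-\mu_0)=0$. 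The strict monotonicity part of Theorem~\ref{thm:pp-evp-abstract-existence}, together with the fact that $b\frac{\partial g}{\partial\xi}(u_{\mu_0})u_{\mu_0}$ is not almost everywhere zero (as $b\not\equiv 0$, $g_\xi>0$, and $u_{\mu_0}>0$ by Lemma~\ref{lem:solution-bounded}), yields $\mu_1(m)>0$. By Lemma~\ref{lem:U-ev-relation} this means $\spr(U_m(T,0))<1$ so $1\in\varrho(U_m(T,0))$, and Theorem~\ref{thm:pp-inhomogeneous-existence} provides a unique $T$-periodic solution in $X$ for every right-hand side in $Y$. Consequently $D_u\tilde F(\mu_0,u_{\mu_0}):X\to Y$ is a continuous bijection, and the open mapping theorem supplies the bounded inverse.

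The implicit function theorem then produces a $C^1$ local branch $\mu\mapsto\tilde u_\mu$ with $\tilde F(\mu,\tilde u_\mu)=0$ and $\tilde u_{\mu_0}=u_{\mu_0}$. For $\mu$ near $\mu_0$ the branch stays in the truncation region and hence solves \eqref{eq:pp-logistic-abstract}; by the uniqueness in Proposition~\ref{prop:pp-uniqueness}, $\tilde u_\mu$ coincides with $u_\mu$. Since $\mu_0\in J$ was arbitrary, $\mu\mapsto u_\mu$ is globally $C^1$ from $J$ into $W(0,T;V,V')$, and differentiating $F(\mu,u_\mu)\equiv 0$ in $\mu$ gives exactly \eqref{eq:linearized-logistic-equation} for $v_\mu:=du_\mu/d\mu$. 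Monotonicity is then immediate: the right-hand side $u_\mu$ of \eqref{eq:linearized-logistic-equation} is strictly positive, and the same spectral argument identifies the associated linear periodic problem as satisfying the positivity assertion of Theorem~\ref{thm:pp-inhomogeneous-existence}, so $v_\mu>0$ and $\mu\mapsto u_\mu$ is strictly increasing.

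The main obstacle is the rigorous verification that the truncated Nemytskii operator $u\mapsto \tilde g(\cdot,u)u$ is continuously Fr\'echet-differentiable from $X$ into $Y$. The truncation forces $\tilde g$ and its $\xi$-derivative to be uniformly bounded and continuous, so via the embedding chain $W\hookrightarrow C([0,T],H)\hookrightarrow L^2((0,T),H)\hookrightarrow L^2((0,T),V')$ together with a dominated-convergence argument one obtains differentiability with the expected derivative $h\mapsto[\tilde g(u)+\tilde g_\xi(u)u]h$; continuity of this derivative in $u$ uses the same tools.
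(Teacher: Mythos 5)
Your implicit-function-theorem route is genuinely different from the paper's argument. The paper instead works directly with the difference quotients $v_\lambda:=(u_\lambda-u_\mu)/(\lambda-\mu)$, which by Lemma~\ref{lem:g-locally-lipschitz} satisfy the \emph{linear} periodic problem \eqref{eq:pp-difference}; positivity of that problem, monotone and weak$^*$ limits, Theorems~\ref{thm:linear-perturbation} and~\ref{thm:evolution-convergence}, and Kato's resolvent continuity for $(I-U_\lambda(T,0))^{-1}$ then drive the continuity and differentiability. Both approaches hinge on the same spectral fact $\mu_1\bigl(b[g(u_\mu)+g_\xi(u_\mu)u_\mu]-\mu\bigr)>0$, which you identify correctly, and both get monotonicity from positivity of the right-hand side $u_\mu$ in the linearized equation via Theorem~\ref{thm:pp-inhomogeneous-existence}.

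However, your sketch of the crucial step — that the truncated Nemytskii operator is $C^1$ from $X\subset W(0,T;V,V')$ into $Y=L^2((0,T),V')$ — has two real defects as written. First, the truncation you borrow from the proof of Theorem~\ref{thm:existence-of-periodic-solution} is $\tilde g=\min\{g,g(\cdot,\cdot,M)\}$, which makes $\xi\mapsto\tilde g(\xi)\xi$ Lipschitz but \emph{not} $C^1$ at $\xi=M$; your scheme needs a genuinely $C^1$ truncation of $\xi\mapsto g(\xi)\xi$ with bounded derivative, which is a different construction. Second, and more seriously, your proposed embedding chain factors the Nemytskii map through $L^2((0,T),H)\to L^2((0,T),H)$, and a Nemytskii operator from $L^2(\Omega)$ to $L^2(\Omega)$ is Fr\'echet differentiable only if it is affine — the classical Krasnoselskii obstruction. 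Post-composing with the inclusion $H\hookrightarrow V'$ does not remove this, because the obstruction lives in the source norm, not the target. To salvage the step you must exploit a strict gap in the Lebesgue scale: use $V\hookrightarrow L^q(\Omega)$ for some $q>2$ (as in the proof of Theorem~\ref{thm:eigenfunction-comparison}), obtain $X\hookrightarrow L^2((0,T),L^q(\Omega))$, prove $C^1$ Fr\'echet differentiability of the Nemytskii map from $L^q(\Omega)$ into $L^{q'}(\Omega)\hookrightarrow V'$, and then carry this through the Bochner integral in $t$. That is a substantive further argument, not the dominated-convergence one-liner you indicate. A smaller gap: the claim that the IFT branch ``stays in the truncation region'' for $\mu$ near $\mu_0$ does not follow from $W$-continuity alone, since $W(0,T;V,V')$ does not embed into $L^\infty(\Omega\times(0,T))$; you need to pass through Lemma~\ref{lem:solution-bounded} and the $L^2\to L^\infty$ estimate of Proposition~\ref{prop:evolution-system}(U5) to convert $H$-closeness of the initial value into an $L^\infty$-bound on the branch. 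These issues are fixable, but as presented the central analytic claim is wrong, which is presumably why the paper sidesteps the IFT and works with difference quotients and the perturbation machinery instead.
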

\begin{proof}
  We first prove the monotinocity. Suppose $\mu_{1}(0)<\mu,\lambda<\mu^{*}(b)$ with $\mu\neq\lambda$. Setting
  \begin{equation*}
    v_\lambda:=\frac{u_{\lambda}-u_{\mu}}{\lambda-\mu}
  \end{equation*}
  we deduce from \eqref{eq:g-locally-lipschitz} that
  \begin{equation}
    \label{eq:pp-difference}
    \begin{aligned}
      \dot{v}_\lambda + A(t)v_{\lambda} +b(t)[g(t,u_\lambda)+m(t,u_\mu,u_\lambda)u_\mu)]v_\lambda-\lambda v_\lambda & = u_\mu       \\
      v_\lambda(0)                                                                                                  & =v_\lambda(T)
    \end{aligned}
  \end{equation}
  We note that $bm(t,u_\mu,u_\lambda)u_\mu>0$ and $\mu_1\left(b(t)g(t,u_\lambda)-\lambda\right)=0$ by Remark~\ref{rem:logistic-eigenvalue}. Hence by Theorem~\ref{thm:pp-evp-abstract-existence} we have that
  \begin{equation}
    \label{eq:linearized-ev-positive}
    \mu_1\bigl(bg(t,u_\lambda)-\lambda+bm(t,u_\mu,u_\lambda)u_\mu\bigr)>0
  \end{equation}
  for all $\lambda\in (\mu_1(0),\mu^*(b))$. Now Theorem~\ref{thm:pp-inhomogeneous-existence} implies that $v_\lambda$ is the unique solution to \eqref{eq:pp-difference} and since $u_\mu>0$ we have $v_\lambda>0$. In particular $u_\mu$ is increasing as a function of $\mu$.

  To prove the continuity fix $\mu\in(\mu_1(0),\mu^*(b))$. By the monotonicity it follows that $u_\lambda\to w$ in $L^2((0,T),H)$ and $u_\lambda(0)\to w(0)$ in $H$ as $\lambda\to \mu^+$. Fix $\delta>0$ such that $\mu+\delta<\mu^*(b)$ and consider $\lambda\leq\mu+\delta$. Then $0<u_{\lambda}\leq\|u_{\mu+\delta}\|_{\infty}$ and the family $\left(g(\cdot,u_\lambda)\right)_{\lambda\in(\mu_{1}(0),\mu+\delta)}$ is uniformly bounded in $L^\infty((0,T),L^\infty(\Omega))$. In particular, $g(\cdot,u_\lambda)\stackrel{*}{\rightharpoonup}g(\cdot,w)$ weak$^*$ in $L^{\infty}((0,T),L^\infty(\Omega))$. By Theorem~\ref{thm:linear-perturbation} $v$ is a solution to
  \begin{equation}
    \label{eq:pp-limit-equation}
    \begin{aligned}
      \dot w+A(t)w & =\mu w-b(t)g(t,w)w &  & t\in[0,T] \\
      w(0)         & =w(T)              &  &
    \end{aligned}
  \end{equation}
  and $u_\lambda\to w$ in $W(0,T;V,V')$. By the uniqueness of solutions from Proposition~\ref{prop:pp-uniqueness} it follows that $w=u_\mu$. A similar argument applies if $\lambda\to\mu^-$.

  For the differentiability define $\delta>0$ as above then
  \begin{equation*}
    m(t,u_{\mu},u_{\lambda}) \overset{*}{\rightharpoonup} \frac{\partial g}{\partial\xi}(t,u_{\mu})
  \end{equation*}
  weak$^*$ in $L^{\infty}((0,T),L^{\infty}(\Omega))$, similarly to $g(\cdot,u_{\lambda})$ as above. For any $\lambda\in(\mu_{1}(0),\mu+\delta)$ denote by $\left(U_{\lambda}(t,s)\right)_{(t,s)\in\Delta_{T}}$ the evolution system associated with
  \begin{equation*}
    \left(A(t)+b(t)\left[g(t,u_{\lambda})+m(t,u_{\mu},u_{\lambda})u_{\mu}\right]-\lambda\right)_{t\in[0,T]}.
  \end{equation*}
  Then by \eqref{eq:linearized-ev-positive} we have $1\in\varrho(U_{\lambda}(T,0))$. As in the proof of Theorem~\ref{thm:pp-inhomogeneous-existence} we then have $v_{\lambda}(t)=U_{\lambda}(t,0)w_{\lambda}(0)+y_{\lambda}(t)$ and in particular $v_{\lambda}(0) = (I-U_{\lambda}(T,0))^{-1}y_{\lambda}(T)$. By Theorem~\ref{thm:evolution-convergence} and a well-known perturbation result \cite[Theorem~IV~2.25]{kato:76:ptl}
  \begin{equation*}
    v_{\lambda}(0) = (I-U_{\lambda}(T,0))^{-1}y_{\lambda}(T) \to (I-U_{\mu}(T,0))^{-1}y_{\mu}(T) =: v_{0} \quad \text{in }H
  \end{equation*}
  as $\lambda\to\mu$, where $\left(U_{\mu}(t,s)\right)_{(t,s)\in\Delta_{T}}$ is the evolution system associated with the family of operators in \eqref{eq:pp-difference}. Hence, Theorem~\ref{thm:linear-perturbation} implies $v_{\lambda}\to v_{\mu}$ in $W(s_{0},T;V,V')$ for every $s_{0}\in(0,T]$. In particular $v_{\lambda}(0)=v_{\lambda}(T)\to v_{\mu}(T) = v_{\mu}(0)$ in $H$. Hence $v_{\lambda}\to v_{\mu}$ in $W(0,T;V,V')$ and $v_{\mu}$ is a solution of \eqref{eq:linearized-logistic-equation}. By \eqref{eq:linearized-ev-positive} and Theorem~\ref{thm:pp-inhomogeneous-existence} that solution is unique and positive. The continuity of the derivative follows from the continuity of $m_{\mu}$ as a function of $\mu$ and Theorem~\ref{thm:linear-perturbation}.
\end{proof}

\begin{corollary}
  \label{cor:bifurcation-of-periodic-solutions}
  Given $\mu^{*}(b)<\infty$ we have $\|u_{\mu}\|_{\infty}\downarrow 0$ as $\mu\downarrow\mu_{1}(0)$ and $\|u_{\mu}\|_{\infty}\uparrow\infty$ as $\mu\uparrow\mu^{*}(b)$.
\end{corollary}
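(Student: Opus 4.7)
The plan is to exploit the monotonicity of $\mu\mapsto u_\mu$ from Theorem~\ref{thm:smoothness-in-mu} and to treat the two limits separately. The upper limit reduces to a short argument via Remark~\ref{rem:logistic-eigenvalue}, while the lower limit requires the perturbation Theorem~\ref{thm:linear-perturbation} combined with an $L^2$-to-$L^\infty$ smoothing step.

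For $\mu\uparrow\mu^*(b)$ I argue by contradiction. If $\|u_\mu\|_\infty\leq M$ on a left neighbourhood of $\mu^*(b)$, then by continuity and $T$-periodicity of $g$ the quantity $C_g:=\sup_{\overline\Omega\times\mathbb R\times[0,M]}g$ is finite and $bg(\cdot,u_\mu)\leq C_g b$ almost everywhere. By Remark~\ref{rem:logistic-eigenvalue} we have $\mu=\mu_1(bg(\cdot,u_\mu))$, and the strict monotonicity in Theorem~\ref{thm:pp-evp-abstract-existence} yields $\mu\leq\mu_1(C_g b)<\mu^*(b)$. The right-hand bound is independent of $\mu$, contradicting $\mu\to\mu^*(b)^-$.

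For $\mu\downarrow\mu_1(0)$ the strategy is first to show $u_\mu(0)\to 0$ in $H$ and then to upgrade. Monotonicity gives $u_\mu\downarrow w\geq 0$ pointwise, dominated by $u_{\mu_1(0)+\delta}\in L^\infty$, so dominated convergence yields $u_\mu\to w$ in $L^2((0,T),H)$, $u_\mu(0)\to w(0)$ in $H$, and the weak$^*$ convergence $bg(\cdot,u_\mu)-\mu\stackrel{*}{\rightharpoonup}bg(\cdot,w)-\mu_1(0)$ in $L^\infty((0,T),L^\infty(\Omega))$. Choosing $\mu_n\downarrow\mu_1(0)$ and applying Theorem~\ref{thm:linear-perturbation} with $f_n=0$, I identify $w$ with the solution of the corresponding initial-value problem; the strong part of that theorem gives $u_{\mu_n}(T)\to w(T)$ in $H$, and combined with $u_{\mu_n}(0)=u_{\mu_n}(T)$ this yields $w(0)=w(T)$. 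Hence $w$ is a non-negative $T$-periodic solution of \eqref{eq:pp-logistic-abstract} at $\mu=\mu_1(0)$, so Lemma~\ref{lem:solution-bounded} together with Proposition~\ref{prop:existence-necessary} forces $w=0$, and in particular $u_\mu(0)\to 0$ in $H$.

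Finally, to upgrade to $\|u_\mu\|_\infty\to 0$, Lemma~\ref{lem:solution-bounded} provides $0\leq u_\mu(t)\leq e^{\mu t}U(t,0)u_\mu(0)$, and \eqref{eq:2-infty-estimate} gives $\|U(t,0)u_\mu(0)\|_\infty\leq Ct^{-N/4}\|u_\mu(0)\|_H$. The singularity at $t=0$ is handled by evaluating at $t+T$ using the $T$-periodicity of $u_\mu$, producing a uniform bound $\|u_\mu\|_{L^\infty(\Omega\times[0,T])}\leq C'\|u_\mu(0)\|_H$ on any compact $\mu$-neighbourhood of $\mu_1(0)$, from which the conclusion follows. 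The main obstacle I anticipate is establishing the $T$-periodicity of the limit $w$, which depends on the strong rather than merely weak conclusion in Theorem~\ref{thm:linear-perturbation}; once this is in place, both the identification of $w$ as a logistic solution and the $L^2$-to-$L^\infty$ upgrade are essentially routine.
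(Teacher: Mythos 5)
Your treatment of the limit $\mu\downarrow\mu_1(0)$ matches the paper's: monotone convergence plus the perturbation result Theorem~\ref{thm:linear-perturbation} identify the decreasing limit $w$ as a non-negative $T$-periodic solution at $\mu=\mu_1(0)$, Proposition~\ref{prop:existence-necessary} forces $w=0$, and the $L^2$-to-$L^\infty$ upgrade via Lemma~\ref{lem:solution-bounded}, Proposition~\ref{prop:evolution-system}(U5) and the $T$-shift $u_\mu(t)=u_\mu(t+T)$ is exactly what the paper does. Your worry about the $T$-periodicity of $w$ is resolved correctly: the strong conclusion of Theorem~\ref{thm:linear-perturbation} (with $f_n=0$) gives $u_{\mu_n}(T)\to w(T)$ in $H$, so $w(0)=w(T)$ follows from $u_{\mu_n}(0)=u_{\mu_n}(T)$.

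For $\mu\uparrow\mu^*(b)$ your argument is genuinely different, and simpler. The paper reruns the perturbation machinery to produce a non-trivial positive periodic solution at $\mu=\mu^*(b)$ and then invokes Proposition~\ref{prop:existence-necessary} to get a contradiction. You instead go directly through Remark~\ref{rem:logistic-eigenvalue}: a uniform bound $\|u_\mu\|_\infty\leq M$ gives $bg(\cdot,u_\mu)\leq C_g b$ with $C_g$ finite (compactness of $\overline\Omega\times[0,T]\times[0,M]$ and Assumption~\ref{ass:g}), hence $\mu=\mu_1(bg(\cdot,u_\mu))\leq\mu_1(C_g b)$ by the monotonicity in Theorem~\ref{thm:pp-evp-abstract-existence}, and $\mu_1(C_g b)<\mu^*(b)$ because $\gamma\mapsto\mu_1(\gamma b)$ is strictly increasing (Proposition~\ref{prop:es-ev-monotonicity}; here one uses $b>0$, which is the only non-degenerate case since otherwise $(\mu_1(0),\mu^*(b))=\emptyset$). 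This is essentially the proof of Proposition~\ref{prop:existence-necessary} extracted as a uniform estimate, so it avoids any new limit-passage and does not need Theorem~\ref{thm:linear-perturbation} for this half. Both routes are correct; yours is shorter, the paper's reuses the continuity argument it has already set up. One minor point of precision: in your chain $\mu\leq\mu_1(C_g b)$ only (not necessarily strict) monotonicity of the eigenvalue in the potential is used; the strictness you need is in the second inequality $\mu_1(C_g b)<\mu^*(b)$, as noted above.
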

\begin{proof}
  Suppose that $\mu\downarrow\mu_{1}(0)$. As in the continuity part of the proof of Theorem~\ref{thm:smoothness-in-mu} there exists $w$ such that $u_{\mu}\to w$ in $W(0,T;V,V')$ and $w$ satisfies the equation~\eqref{eq:pp-limit-equation} with $\mu=\mu_1(0)$. By Proposition~\ref{prop:existence-necessary} it follows that $w=0$ and in particular $u_\mu(0)\to 0$ in $H$ as $\mu\downarrow\mu_1(0)$. By Lemma~\ref{lem:solution-bounded} and Proposition~\ref{prop:evolution-system} we have that
  \begin{equation*}
    \begin{aligned}
      \|u_\mu(t)\|_\infty
      =\|u_\mu(t+T)\|_\infty
       & \leq e^{\mu t+T}\|U(t+T,0)u_\mu(0)\|_\infty           \\
       & \leq Ce^{2|\mu^{*}(b)|T}T^{-N/4}\|u_{\mu}(0)\|_2\to 0
    \end{aligned}
  \end{equation*}
  as $\mu\downarrow\mu_1(0)$ for all $t\in[0,T]$. Hence $u_\mu\to 0$ in $L^\infty((0,T),L^\infty(\Omega))$ as $\mu\downarrow\mu_1(0)$.
  In the case $\mu\uparrow\mu^{*}(b)<\infty$ we argue by contradiction. Suppose,
  \begin{equation*}
    \lim_{\mu\to\mu^{*}(b)}\|u_{\mu}\|_{\infty}=M<\infty
  \end{equation*}
  then the same argument as used in the continuity part of Theorem~\ref{thm:smoothness-in-mu} shows that $u_\mu\to w$ in $W(0,T;V,V')$ as $\mu\uparrow\mu^*(b)$ and $w$ solves \eqref{eq:pp-limit-equation} with $\mu=\mu^{*}(b)$. However this is a contradiction to Proposition~\ref{prop:existence-necessary} and so $\|u_{\mu}\|_{\infty}\to\infty$ as $\mu\uparrow\mu^*(b)$.
\end{proof}

The differentiability in Theorem~\ref{thm:smoothness-in-mu} coupled with the convergence and uniform boundedness of eigenfunctions of the equation \eqref{eq:pp-evp-abstract} allow us to specify where the blowup of solutions occurs. As an auxiliary problem we consider the problem
\begin{equation}
  \label{eq:periodic-torsion-function}
  \begin{aligned}
    \dot w+A(t)w & =1    &  & t\in[0,T] \\
    w(0)         & =w(T) &  &
  \end{aligned}
\end{equation}

\begin{proposition}
  \label{prop:uniform-blow-up}
  Let $w_{\gamma}$ denote a solution of
  \begin{equation}
    \label{eq:torsion}
    \begin{aligned}
      \dot{w} + A(t)w + \gamma b(t)w + \omega w & = 1          &  & t\in(0,T], \\
      w(0)                                      & = w(T)\in H,
    \end{aligned}
  \end{equation}
  where $\omega\in\mathbb{R}$. Suppose $\mu^{*}(b)<\infty$. If there exists $\mu_{0}\in(\mu_{1}(0),\mu^{*}(b))$ and $\gamma_{0}>0$ such that $u_{\mu_{0}}\geq w_{\gamma_{0}}$ then $u_{\mu}(x,t)\uparrow\infty$ as $\mu\to\mu^{*}(b)$ for all $(x,t)\in\Omega\times[0,T]$ where $\varphi_{\infty}(x,t)>0$.
\end{proposition}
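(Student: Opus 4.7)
The strategy is a subsolution argument combined with the monotonicity of $\mu\mapsto u_\mu$ from Theorem~\ref{thm:smoothness-in-mu}. The hypothesis and this monotonicity give $u_\mu\geq u_{\mu_0}\geq w_{\gamma_0}>0$ on $\Omega\times[0,T]$ for every $\mu\in[\mu_0,\mu^*(b))$, so the pointwise limit $\tilde u(x,t):=\lim_{\mu\uparrow\mu^*(b)}u_\mu(x,t)\in(0,\infty]$ exists. I will show $\tilde u(x,t)=+\infty$ on $\{\varphi_\infty>0\}$ by producing, for every $n\in\mathbb N$, a subsolution of \eqref{eq:pp-logistic-abstract} dominated by $u_\mu$ (for $\mu$ close to $\mu^*(b)$) whose value at $(x,t)$ approximates $n\varphi_\infty(x,t)$.

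For the construction, let $\varphi_\gamma$ be the normalised principal eigenfunction from Theorem~\ref{thm:limit-support}. The eigenvalue identity gives
\begin{equation*}
    \frac{d}{dt}(n\varphi_\gamma)+A(t)(n\varphi_\gamma)+b(t)g(t,n\varphi_\gamma)(n\varphi_\gamma)
    =\bigl[\mu_1(\gamma b)+b(t)\bigl(g(t,n\varphi_\gamma)-\gamma\bigr)\bigr](n\varphi_\gamma).
\end{equation*}
Since the bound $\|\varphi_\gamma\|_\infty\leq c$ from \eqref{eq:ev-bounded} is independent of $\gamma$ (using $\mu^*(b)<\infty$), I can choose $\gamma$ so large that $g(x,t,nc)\leq\gamma$ on $\overline\Omega\times[0,T]$; then $n\varphi_\gamma$ is a subsolution of \eqref{eq:pp-logistic-abstract} for every $\mu\in(\mu_1(\gamma b),\mu^*(b))$. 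Pairing it with the super-solution $\kappa\psi$ from Proposition~\ref{prop:existence-supersolution}, with $\kappa$ large enough that $n\varphi_\gamma(0)\leq\kappa\psi(0)$ (achievable via Theorem~\ref{thm:eigenfunction-comparison} applied to $\varphi_\gamma$ and $\psi$, since $\psi$ corresponds to a compactly supported weight $\gamma'b_\delta$ with $\mu_1(\gamma'b_\delta)\geq\mu_1(\gamma b)$ arrangeable by taking $\delta$ small and $\gamma'$ large), the monotone iteration from Theorem~\ref{thm:existence-of-periodic-solution} together with uniqueness (Proposition~\ref{prop:pp-uniqueness}) forces $u_\mu\geq n\varphi_\gamma$ on $\Omega\times[0,T]$.

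To conclude, fix $(x,t)$ with $\varphi_\infty(x,t)>0$. Theorem~\ref{thm:limit-support} supplies a sequence $\gamma_k\to\infty$ with $\varphi_{\gamma_k}(\tau)\to\varphi_\infty(\tau)$ in $H$ for every $\tau\in[0,T]$. The step I expect to be the main obstacle is upgrading this to pointwise convergence at the specific point $(x,t)$: on a compact subset of $Q_0$ containing $(x,t)$, the weight $b$ vanishes a.e., so $\varphi_{\gamma_k}$ satisfies the homogeneous equation $\dot\varphi+A(t)\varphi=\mu_1(\gamma_k b)\varphi$ with $L^\infty$-bound uniform in $k$, and classical interior H\"older estimates (cf.\ \cite{aronson:68:nsl}) give equicontinuity, hence locally uniform convergence $\varphi_{\gamma_k}\to\varphi_\infty$ on $Q_0$. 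Thus $\varphi_{\gamma_k}(x,t)\geq\tfrac12\varphi_\infty(x,t)$ for large $k$, and the previous step gives $u_\mu(x,t)\geq\tfrac n2\varphi_\infty(x,t)$ for every $\mu\in(\mu_1(\gamma_k b),\mu^*(b))$. Letting $n\to\infty$ (so $k\to\infty$ and $\mu\uparrow\mu^*(b)$) and using the monotonicity of $u_\mu$ in $\mu$ yields $\tilde u(x,t)=+\infty$.
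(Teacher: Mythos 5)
Your proof is correct, but it takes a genuinely different and in fact more direct route than the paper's. The paper uses the hypothesis $u_{\mu_0}\geq w_{\gamma_0}$ together with the monotonicity of $w_\gamma$ and a variation-of-constants computation to obtain a \emph{$\gamma$-uniform} lower bound $u_\mu\geq M\varphi_\gamma$ with a fixed constant $M$, and then boosts this bound by estimating the derivative $v_\mu=\frac{du_\mu}{d\mu}$ through a linear comparison that makes the coefficient $\tfrac{M}{\mu_1(\gamma(\mu)b)-\mu}$ diverge as $\mu\uparrow\mu^*(b)$. You instead bypass the derivative entirely: for each $n$ you pick $\gamma$ so large that $g(\cdot,\cdot,nc)\leq\gamma$, verify $n\varphi_\gamma$ is a subsolution once $\mu\geq\mu_1(\gamma b)$, order it below a super-solution $\kappa\psi$ by invoking Theorem~\ref{thm:eigenfunction-comparison}, and let the monotone iteration plus uniqueness give $u_\mu\geq n\varphi_\gamma$. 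This produces a lower bound with a \emph{growing} prefactor $n$, so the final limit $n\to\infty$ finishes immediately. Notably, your argument never actually uses the hypothesis $u_{\mu_0}\geq w_{\gamma_0}$ --- the torsion-function comparison only re-derives positivity of $u_\mu$, which is already automatic from Lemma~\ref{lem:solution-bounded} --- so if it is correct it shows this hypothesis is superfluous; that is worth flagging explicitly rather than nominally citing it. Two small points to tidy up: (i) you should state that the $\gamma_k$ from Theorem~\ref{thm:limit-support} is eventually $\geq\gamma(n)$ so that $n\varphi_{\gamma_k}$ really is a subsolution, and then fix $k$ before sending $\mu\uparrow\mu^*(b)$; (ii) your local-uniform upgrade of $\varphi_{\gamma_k}\to\varphi_\infty$ via interior H\"older estimates is only valid for $(x,t)\in Q_0$, so the conclusion is for points of $Q_0$ where $\varphi_\infty>0$ --- but the paper's own proof has the identical implicit restriction (it also needs pointwise convergence of $\varphi_{\gamma_n}(x,t)$), and you have actually been more explicit about justifying that step than the paper is.
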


\begin{proof}
  We take the family of uniformly bounded principal eigenfunctions, $(\varphi_{\gamma})_{\gamma\geq 0}$, from Proposition~\ref{prop:es-ev-monotonicity}. For each $\gamma\geq 0$ denote by $\left(U_{\gamma}(t,s)\right)_{(t,s)\in\Delta_{T}}$ the evolution system associated with the family $\left(A(t)+\gamma b(t) + \omega\right)_{t\in[0,T]}$. By the uniform bound on the family $(\varphi_{\gamma})_{\gamma\geq 0}$ and monotonoicity of $\mu_{1}(\gamma b)$ we have
  \begin{equation*}
    \begin{aligned}
      \varphi_{\gamma}(0) & = \left(I - U_{\gamma}(T,0)\right)^{-1}\int_{0}^{T}U_{\gamma}(T,s)\left(\mu_{1}(\gamma b) + \omega\right)\varphi_{\gamma}(s)\,ds                          \\
                          & \leq \left(I - U_{\gamma}(T,0)\right)^{-1}\int_{0}^{T}U_{\gamma}(T,s)\left(\mu^{*}(b) + \omega\right)C\, ds                                               \\
                          & = \left(\mu^{*}(b) + \omega\right)C\left(I - U_{\gamma}(T,0)\right)^{-1}\int_{0}^{T}U_{\gamma}(T,s)\,ds = \left(\mu^{*}(b) + \omega\right)Cw_{\gamma}(0),
    \end{aligned}
  \end{equation*}
  for all $\gamma\geq 0$. Hence, by Theorem~\ref{thm:linear-existence} we have $w_{\gamma}\geq M\varphi_{\gamma}$ for all $\gamma\geq 0$ where $M:=\left((\mu^{*}(b) + \omega)C\right)^{-1}$. From Proposition~\ref{prop:es-ev-monotonicity} $U_{\gamma}(t,s)$ is in $\gamma$ and so we have $w_{\gamma}$ is decreasing as $\gamma\to\infty$. Now, by assumption there exists $\mu_{0}\in(\mu_{1}(0),\mu^{*}(b))$ and $\gamma_{0}>0$ such that $u_{\mu_{0}}\geq w_{\gamma_{0}} \geq w_{\gamma} \geq M\varphi_{\gamma}$ for all $\gamma\geq\gamma_{0}$. Moreover, by Theorem~\ref{thm:smoothness-in-mu}, $u_{\mu}\geq M\varphi_{\gamma}$ for all $\mu\in[\mu_{0},\mu^{*}(b))$ and all $\gamma\geq\gamma_{0}$.

  Fix $\mu\in[\mu_{0},\mu^{*}(b))$ and choose $\gamma(\mu)>0$ such that
  \begin{equation*}
    \gamma(\mu) > \max\left\{\left\|g(\cdot,u_{\mu}) + \frac{\partial g}{\partial\xi}(\cdot,u_{\mu})u_{\mu}\right\|_{L^{\infty}(\Omega\times[0,T])}, \gamma_{0}\right\}
  \end{equation*}
  and $\mu_{1}(\gamma(\mu)b)>\mu_{0}$. By assumptions on $g$ and $\frac{\partial g}{\partial \xi}$, we have from Corollary~\ref{cor:bifurcation-of-periodic-solutions} that $\gamma(\mu)\to\infty$ as $\mu\to\mu^{*}(b)$. From Theorem~\ref{thm:smoothness-in-mu} we have that $v_{\gamma}:=\frac{d u_{\mu}}{d\mu}$ exists. Let $\left(U_{\mu}(t,s)\right)_{(t,s)\in\Delta_{T}}$ denote the evolution system associated with the problem $v_{\mu}$ satisfies. An application of Proposition~\ref{prop:es-ev-monotonicity} gives $U_{\mu}(T,0)>U_{\gamma}(T,0)$ for all $\gamma\geq\gamma(\mu)$, so
  \begin{equation*}
    \begin{aligned}
      v_{\mu}(0) & = \left(I - U_{\mu}(T,0)\right)^{-1}\int_{0}^{T}U_{\mu}(T,s)u_{\mu}(s)\,ds                                   \\
                 & \geq \left(I - U_{\mu}(T,0)\right)^{-1}\int_{0}^{T}U_{\mu}(T,s)M\varphi_{\gamma}(s)\,ds                      \\
                 & > \left(I - U_{\gamma}(T,0)\right)^{-1}\int_{0}^{T}U_{\gamma}(T,s)M\varphi_{\gamma}(s)\,ds = M\varphi_{g}(0)
    \end{aligned}
  \end{equation*}
  for all $\gamma\geq\gamma(\mu)$. Then,
  \begin{equation*}
    \begin{aligned}
      \dot{v}_{\mu} + A(t)v_{\mu} + \gamma b(t)v_{\mu} - \mu v_{\mu} & > u_{\mu}                                                                                                                                               \\
                                                                     & \geq \frac{M}{(\mu_{1}(\gamma(\mu)b) - \mu)}(\mu_{1}(\gamma(\mu)b) - \mu)\varphi_{\gamma}                                                               \\
                                                                     & = \frac{M}{(\mu_{1}(\gamma(\mu)b) - \mu)}\left(\dot{\varphi}_{\gamma} + A(t)\varphi_{\gamma} + \gamma b(t)\varphi_{\gamma} - \mu\varphi_{\gamma}\right)
    \end{aligned}
  \end{equation*}
  and hence
  \begin{equation*}
    v_{\mu} \geq \frac{M}{(\mu_{1}(\gamma(\mu)b) - \mu)}\varphi_{\gamma} \quad \forall\gamma\geq\gamma(\mu).
  \end{equation*}
  Choosing a sequence $(\varphi_{\gamma_{n}})$ so that Theorem~\ref{thm:limit-support} holds we have
  \begin{equation*}
    v_{\mu}(x,t) \geq \frac{M}{(\mu_{1}(\gamma_{n}(\mu)b) - \mu)}\varphi_{\gamma_{n}}(x,t) \to \infty \quad \text{as} \quad \mu\to\mu^{*}(b)
  \end{equation*}
  for any $(x,t)\in\Omega\times[0,T]$ where $\varphi_{\infty}(x,t)>0$ and so $u_{\mu}(x,t)\uparrow\infty$.
\end{proof}

\begin{remark}
  \label{rem:torsion}
  The existence of $w_{\gamma}$ in Proposition~\ref{prop:uniform-blow-up} is guaranteed by taking $\omega$ sufficiently large. If $\mu_{1}(\gamma b + \omega)>0$ then by Theorem~\ref{thm:pp-inhomogeneous-existence} there exists a positive solution of \eqref{eq:torsion}. In particular, we can choose $\omega\in\mathbb{R}$ independent of $\gamma$ due to the monotonicity of $\mu_{1}(\gamma b)$.
\end{remark}

\section{Local boundedness of blow up solutions}
We saw in Corollary~\ref{cor:bifurcation-of-periodic-solutions} that the solution $u_\mu$ of \eqref{eq:pp-logistic-abstract} blows up as $\mu\uparrow\mu^*(b)$ on the set where the limit eigenfunction $\varphi_\infty$ from Theorem~\ref{thm:limit-support} is positive. We now show that at least in some special case, the solutions $u_\mu$ of \eqref{eq:pp-evp-abstract} have a finite limit as $\mu\to\mu^*(b)$ if $\mu^*(b)<\infty$. We work with the following assumptions.
\begin{assumption}
  \label{assum:blow-up}
  We make the following assumptions for the remainder of this section:
  \begin{enumerate}[({B}1)]
  \item By a non-trivial positive solution of \eqref{eq:pp-logistic} we mean the case where
    \begin{equation*}
      \mathcal{A}(x,t):=-\alpha(t)\Delta,
    \end{equation*}
    where $\alpha\in L^\infty(\mathbb R)$ is $T$-periodic such that there exist constants $\alpha_0,\alpha_1>0$ with $\alpha_0\leq\alpha(t)\leq\alpha_1$ for almost every $t\in\mathbb R$. Under this assumption
    \begin{equation*}
      \aaa(t,u,v) = \alpha(t)(\nabla u,\nabla v)_{H} \quad \forall u,v\in V.
    \end{equation*}
  \item In addition to the usual conditions, we assume there exist $c>0$ and $p\in (2,\infty)$ such that
    \begin{equation*}
      g(x,t,\xi)\geq c\xi^{p-1}
    \end{equation*}
    for all $(x,t,\xi)\in \Omega\times\mathbb R\times[0,\infty)$.
  \end{enumerate}
\end{assumption}

As a result of the monotonicity of solutions $u_{\mu}$ (Theorem~\ref{thm:smoothness-in-mu}), with respect to $\mu$, we have that
\begin{equation}
  \label{eq:limit-solution}
  \lim_{\mu\to\mu^{*}(b)}u_{\mu}(x,t) = u_{\infty}(x,t)\in(0,\infty]
\end{equation}
exists, so we can consider the sets
\begin{equation*}
  \tilde{Q}_{\infty} := \operatorname{int}\left\{(x,t)\in \Omega\times\mathbb{R} \colon u_{\infty}(x,t)<\infty\right\}
\end{equation*}
and
\begin{equation*}
  Q_{\infty} := \tilde{Q}_{\infty}\cap\left(\Omega\times[0,T]\right).
\end{equation*}
We will show later that $Q_{b}\subseteq Q_{\infty}$, possibly with strict inclusion. For now we will derive a Sobolev estimate which will be used to show $u_{\infty}$ is a Sobolev function satisfying a PDE locally in $Q_{\infty}$.
\begin{lemma}
  \label{lem:local-sobolev-bound}
  Given two sub-cylinders $Q_{i}:=\Omega_{i}'\times(s_{i},t_{i})$, $i=1,2$, such that
  \begin{equation*}
    Q_{1}\Subset Q_{2}\Subset \Omega\times(0,T)
  \end{equation*}
  we have
  \begin{equation}
    \label{eq:H-L2-estimate}
    \|u_{\mu}\|_{L^{2}((s_{1},t_{1}),H^{1}(\Omega'_{1})} \leq\frac{C}{\alpha_0}\sqrt{\mu^{*}(b)+1}\|u_{\mu}\|_{L^{2}(Q_{2})}
  \end{equation}
  for every $\mu\in(\mu_{1}(0),\mu^{*}(b))$, where $C$ depends on $\dist(Q_{1},\mathcal{P}(Q_{2}))$ and $\alpha_1$, where $\mathcal{P}(Q_2)$ is the parabolic boundary of $Q_2$.
\end{lemma}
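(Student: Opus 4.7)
The plan is to derive a standard interior Caccioppoli (energy) estimate, exploiting that under Assumption~(B1) the form simplifies to $\aaa(t,u,v)=\alpha(t)(\nabla u,\nabla v)_H$ and that the nonlinear term $bg(t,u_\mu)u_\mu^2$ carries a favourable sign. First I would pick a cutoff $\eta\in C_c^\infty(\Omega\times(0,T))$ with $\supp\eta\subseteq\overline{Q_2}$, $\eta\equiv 1$ on $Q_1$, $0\leq\eta\leq 1$, and $\|\nabla\eta\|_\infty+\|\eta_t\|_\infty\leq C_0$, where $C_0$ depends only on $\dist(Q_1,\mathcal P(Q_2))^{-1}$. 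Since $\eta$ vanishes near $t=0$ and $t=T$, the product $\eta^2 u_\mu$ lies in $W(0,T;V,V')$ with zero temporal traces, and is therefore admissible as a test function in the weak formulation of \eqref{eq:pp-logistic-abstract}.

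Testing the equation against $\eta^2 u_\mu$ and invoking the integration-by-parts formula of \cite[Theorem~XVIII.1.2]{dautray:92:man5} (which, together with $\eta(\cdot,0)=\eta(\cdot,T)=0$, gives $\int_0^T\langle\dot u_\mu,\eta^2 u_\mu\rangle\,dt=-\int_0^T\!\!\int_\Omega\eta\eta_t u_\mu^2\,dx\,dt$), and then expanding $\aaa(t,u_\mu,\eta^2 u_\mu)=\alpha(t)\int_\Omega[\eta^2|\nabla u_\mu|^2+2\eta u_\mu\nabla u_\mu\cdot\nabla\eta]\,dx$, I obtain
\[
\int_0^T\!\!\int_\Omega\alpha(t)\eta^2|\nabla u_\mu|^2\,dx\,dt+\int_0^T\!\!\int_\Omega \eta^2 bg(t,u_\mu)u_\mu^2\,dx\,dt=\int_0^T\!\!\int_\Omega\bigl[\eta\eta_t u_\mu^2-2\alpha(t)\eta u_\mu\nabla u_\mu\cdot\nabla\eta+\mu\eta^2 u_\mu^2\bigr]dx\,dt.
\]
The second integral on the left is nonnegative by the positivity of $b$, $g$ and Lemma~\ref{lem:solution-bounded}, so it may be discarded.

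To the cross term I apply Young's inequality in the form $2\alpha(t)|\eta u_\mu\nabla u_\mu\cdot\nabla\eta|\leq\tfrac12\alpha(t)\eta^2|\nabla u_\mu|^2+2\alpha_1 u_\mu^2|\nabla\eta|^2$, absorbing half of the gradient integral into the left-hand side. Using $\alpha(t)\geq\alpha_0$ on what remains, $\mu<\mu^*(b)$, $\eta\leq 1$, and $\supp\eta\subseteq Q_2$, this gives
\[
\int_{Q_1}|\nabla u_\mu|^2\,dx\,dt\leq\int_0^T\!\!\int_\Omega \eta^2|\nabla u_\mu|^2\,dx\,dt\leq\frac{C}{\alpha_0}\bigl(\mu^*(b)+1\bigr)\int_{Q_2}u_\mu^2\,dx\,dt,
\]
where $C$ depends only on $\alpha_1$ and $C_0$. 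Combining with the trivial bound $\int_{Q_1}u_\mu^2\leq\int_{Q_2}u_\mu^2$ and taking square roots yields \eqref{eq:H-L2-estimate}. There is no substantial analytic obstacle; the only delicate point is justifying that $\eta^2 u_\mu$ is an admissible test function, which is immediate from smoothness and compact support of $\eta$ together with the regularity $u_\mu\in W(0,T;V,V')\cap L^\infty$ furnished by Lemma~\ref{lem:solution-bounded}. The $T$-periodicity of $u_\mu$ plays no role beyond ensuring that the weak equation is posed on all of $[0,T]$, since the cutoff $\eta$ kills the temporal boundary contributions.
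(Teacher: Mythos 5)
Your proof is correct and follows essentially the same route as the paper: test against a spatiotemporal cutoff squared times $u_\mu$, drop the nonnegative nonlinear term, and perform a Caccioppoli estimate. The only cosmetic difference is that the paper absorbs the cross term via the algebraic identity $\aaa(vu,vu)=\aaa(u,v^2u)+\alpha(t)(u^2,|\nabla v|^2)_{L^2}$ rather than via Young's inequality, but both yield the same bound.
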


\begin{proof}
  We begin by showing an estimate on any sub-cylinder $\Omega'\times(s,t)\Subset\Omega\times(0,T)
  $ that depends on the choice of test function. For convenience set $Q':=\Omega'\times(s,t)$. Fix $v\in C_{c}^{\infty}(Q')$. For a given $u\in V$ we have the following identity,
  \begin{equation*}
    \aaa(vu,vu) = \aaa(u,v^{2}u) + (u^{2},|\nabla v|^{2})_{L^{2}}.
  \end{equation*}
  We also have from \eqref{eq:integration-by-parts}
  \begin{equation*}
    0 = \int_{s}^{t}\left\langle\frac{d}{d\tau}(vu_{\mu}),vu_{\mu}\right\rangle\,d\tau  = \int_{s}^{t}\left\langle\dot{v},vu_{\mu}^{2}\right\rangle\,d\tau + \int_{s}^{t}\left\langle\dot{u}_{\mu},v^{2}u_{\mu}\right\rangle\,d\tau
  \end{equation*}
  Now,
  \begin{equation*}
    \begin{aligned}
      \alpha_0\int_{s}^{t}\left\|\nabla(vu_{\mu})\right\|_{L^{2}(Q')}^{2} \,d\tau
       & \leq \int_{s}^{t}\aaa(\tau,vu_{\mu},vu_{\mu})\,d\tau                                                                                                 \\
       & = \int_{s}^{t}\aaa(\tau,u_{\mu},v^{2}u_{\mu})\,d\tau + \alpha_1\int_{s}^{t}(u_{\mu}^{2},|\nabla v|^{2})_{L^{2}}\,d\tau                               \\
       & = \mu\int_{s}^{t}(u_{\mu},vu_{\mu}^{2})_{L^{2}}\,d\tau - \int_{s}^{t}\left(b(\tau)g(\tau,u_{\mu})u_{\mu},v^{2}u_{\mu}\right)_{L^{2}}\,d\tau          \\
       & \hspace{15mm} - \int_{s}^{t}\left\langle\dot{u}_{\mu},v^{2}u_{\mu}\right\rangle\,d\tau + \alpha_1\|u_{\mu}\nabla v\|_{L^{2}(Q')}^{2}                 \\
       & \leq \mu\|vu_{\mu}\|_{L^{2}(Q')}^{2} + \int_{s}^{t}\left\langle\dot{v},vu_{\mu}^{2}\right\rangle\,d\tau +\alpha_1\|u_{\mu}\nabla v\|_{L^{2}(Q')}^{2} \\
       & \leq \left(\mu^{*}(b)\|v\|_\infty + \|\dot{v}v\|_{L^{\infty}(Q')} +\alpha_1\|\nabla v\|_{L^{\infty}(Q')}^{2}\right)\|u_{\mu}\|_{L^{2}(Q')}^{2}.
    \end{aligned}
  \end{equation*}
  Choosing $v\in C_{c}^{\infty}(Q_{2})$ such that $0\leq v\leq 1$ and $v\equiv 1$ on $Q_{1}$ the inequality \eqref{eq:H-L2-estimate} follows.
\end{proof}

\begin{theorem}
  \label{thm:ple-limit-equation}
  Let $u_{\infty}$ be as in \eqref{eq:limit-solution}. Then, $u_\infty$ is a local weak solution of the equation
  \begin{equation}
    \label{eq:ple-limit-equation}
    \partial_{t}u_{\infty} - \alpha(t)\Delta u_{\infty} = \mu^{*}(b)u_{\infty} - b(x,t)g(x,t,u_{\infty})u_{\infty}
  \end{equation}
  in $Q_\infty$.
\end{theorem}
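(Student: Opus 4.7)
The plan is to establish uniform local Sobolev bounds on the family $(u_\mu)_{\mu\in(\mu_1(0),\mu^*(b))}$ inside $Q_\infty$, extract weak limits along any sequence $\mu_n\uparrow\mu^*(b)$, identify those limits with $u_\infty$ via the pointwise monotone convergence $u_\mu\uparrow u_\infty$, and pass to the limit in the weak formulation of \eqref{eq:pp-logistic-abstract} against test functions $\phi\in C_c^\infty(Q_\infty)$.

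First I would upgrade the pointwise finiteness of $u_\infty$ on $\tilde Q_\infty$ to local boundedness on a dense family of subcylinders. Each $u_\mu$ is continuous on $\Omega\times(0,T]$ (by Lemma~\ref{lem:solution-bounded} combined with standard parabolic regularity applied to $\dot u+A(t)u+[bg(\cdot,u_\mu)-\mu]u=0$), so $u_\infty=\sup_\mu u_\mu$ is lower semicontinuous. For a point $(x_0,t_0)\in\tilde Q_\infty$ and a small open neighbourhood $U$ on which $u_\infty<\infty$ pointwise, write $U=\bigcup_{n\in\mathbb N}\{u_\infty\leq n\}$; by lsc each level set is closed in $U$, so the Baire category theorem yields $n$ and a pair of subcylinders $Q_1\Subset Q_2\Subset U$ with $\sup_{Q_2}u_\infty\leq M<\infty$. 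By monotonicity, $u_\mu\leq M$ on $Q_2$ for every admissible $\mu$, and such $Q_1$ can be found near every point of $Q_\infty$.

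Next, Lemma~\ref{lem:local-sobolev-bound} applied to the chosen pair $Q_1\Subset Q_2$ gives a uniform bound on $\|u_\mu\|_{L^2((s_1,t_1),H^1(\Omega_1'))}$. Testing \eqref{eq:pp-logistic-abstract} against $v\in L^2((s_1,t_1),H_0^1(\Omega_1'))$ and exploiting the uniform $L^\infty$-bound on $g(\cdot,u_\mu)$ over $Q_1$ (from continuity of $g$) produces a uniform bound on $\dot u_\mu$ in $L^2((s_1,t_1),H^{-1}(\Omega_1'))$. The Aubin--Lions compactness lemma then supplies a subsequence $u_{\mu_n}\to u_\infty$ strongly in $L^2(Q_1)$; together with the monotone pointwise convergence this identifies the strong $L^2(Q_1)$ and weak $L^2((s_1,t_1),H^1(\Omega_1'))$ limits of the whole family with $u_\infty$, and $\dot u_\mu\rightharpoonup\dot u_\infty$ weakly in $L^2((s_1,t_1),H^{-1}(\Omega_1'))$.

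By continuity of $g$ and the uniform bound $u_\mu\leq M$ on $Q_1$, the dominated convergence theorem gives $bg(\cdot,u_\mu)u_\mu\to bg(\cdot,u_\infty)u_\infty$ in $L^2(Q_1)$. Passing to the limit $\mu\uparrow\mu^*(b)$ in the weak formulation of \eqref{eq:pp-logistic-abstract} tested against an arbitrary $\phi\in C_c^\infty(Q_1)$, each of the four terms converges and I recover the weak form of \eqref{eq:ple-limit-equation}. Because $Q_1\Subset Q_\infty$ was arbitrary, $u_\infty$ is a local weak solution on $Q_\infty$. The main obstacle I anticipate is precisely the first step: promoting pointwise finiteness inside $\tilde Q_\infty$ to genuine local $L^\infty$-boundedness, since lsc alone is not enough; the Baire argument has to be carried out carefully, and if it only produces local boundedness on a dense open subset $Q_\infty'\subseteq Q_\infty$ one may need to establish the equation on $Q_\infty'$ and then argue that no blow-up set can appear inside the interior $\tilde Q_\infty$ of the finiteness set.
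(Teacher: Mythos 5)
Your approach is essentially the same as the paper's: apply Lemma~\ref{lem:local-sobolev-bound} on a pair of sub-cylinders strongly included in $Q_\infty$, extract a weakly convergent subsequence, identify the limit with $u_\infty$, and pass to the limit in the weak formulation. You are in fact more careful than the paper on two points: you record the dual estimate on $\dot u_\mu$ and invoke Aubin--Lions to get strong $L^2$ convergence, which is what actually justifies passing to the limit in the nonlinear term $bg(\cdot,u_\mu)u_\mu$ (the paper simply asserts the limiting identity without addressing this); and you explicitly notice that the step everyone must confront is the very first one.

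That first step is the genuine issue, and you have put your finger on a gap that the paper itself does not close. The set $Q_\infty$ is defined as the interior of the set of \emph{pointwise} finiteness of $u_\infty$, whereas Lemma~\ref{lem:local-sobolev-bound} needs a uniform bound on $\|u_\mu\|_{L^2(Q_2)}$ for some enclosing sub-cylinder $Q_2$, and since $u_\mu\uparrow u_\infty$ pointwise this amounts to requiring $u_\infty\in L^2_{\mathrm{loc}}(Q_\infty)$. Pointwise finiteness of a lower semicontinuous function on an open set does \emph{not} imply local boundedness or local square-integrability. The paper writes, immediately after the theorem, that ``$Q_\infty$ by definition is the set on which the periodic solutions $u_\mu$ have a local $L^\infty$-bound'', but that does not match the definition \eqref{eq:limit-solution} and the subsequent display, which only ask for pointwise finiteness. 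Your Baire argument is the right instinct for trying to repair this (each sub-level set of the lsc function $u_\infty$ is relatively closed, so Baire gives a dense open subset of $Q_\infty$ on which $u_\infty$ is locally bounded), but as you say it only produces a dense open $Q_\infty'\subseteq Q_\infty$, and upgrading $Q_\infty'$ to all of $Q_\infty$ requires an additional argument (e.g.\ a local comparison with a boundary blow-up barrier as in Proposition~\ref{prop:local-bound}, pushed into the unknown region, or redefining $Q_\infty$ from the outset as the set of local $L^\infty$-boundedness). Your self-diagnosis of this obstacle is correct, and your proposed fallback --- prove the equation on $Q_\infty'$ and then argue that no singular set survives in $\tilde Q_\infty$ --- is a reasonable strategy, though it is not a free step.
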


\begin{proof}
  Fix an arbitrary $(x,t)\in Q_\infty$ and choose a sub-cylinder $Q_1=\Omega_1\times(s_1,t_1)$ such that its closure is contained in $Q_\infty$. Let $v\in C_{c}^{\infty}(Q_1)$. By Lemma~\ref{lem:local-sobolev-bound} it follows that $u_{\mu}$ is bounded in $L^2((s_1,t_1),H^1(\Omega_1))$, so there exists a sequence $(u_{\mu_{n}})$ such that $u_{\mu_{n}}\rightharpoonup u_{\infty}$ in $L^{2}((s_{1},t_{1}),H^{1}(\Omega_{1}'))$ weakly as $n\to\infty$. As $u_{\infty}$ is a unique limit the convergence holds for the family $(u_{\mu})_{\mu\geq 0}$ as $\mu\to\mu^{*}(b)$.  Let $\theta\in C_c^\infty(Q_1)$. Then,
  \begin{equation*}
    \begin{aligned}
      -\int_{s_{1}}^{t_{1}}(u_{\mu},\partial_{t}\theta)_{L^{2}}\,d\tau
       & + \int_{s_{1}}^{t_{1}}\aaa(\tau,u_{\mu},\theta)\,d\tau                                     \\
       & = \int_{s_{1}}^{t_{1}}(\mu u_{\mu} - b(\tau)g(\tau,u_{\mu})u_{\mu},\theta)_{L^{2}}\,d\tau.
    \end{aligned}
  \end{equation*}
  By the weak convergence of $(u_{\mu})_{\mu\geq 0}$ we have in the limit $\mu\to\mu^{*}(b)$
  \begin{equation*}
    \begin{aligned}
      -\int_{s_{1}}^{t_1}(u_{\infty},\partial_{t}\theta_{j})_{L^{2}}\,d\tau
       & + \int_{s_{1}}^{t_{1}}\aaa(\tau,u_{\infty},\theta)\,d\tau                                                 \\
       & = \int_{s_{1}}^{t_{1}}(\mu^{*}(b)u_{\infty} - b(\tau)g(\tau,u_{\infty})u_{\infty},\theta)_{L^{2}}\,d\tau.
    \end{aligned}
  \end{equation*}
  Hence $u_\infty$ is a local weak solution of \eqref{eq:ple-limit-equation} as claimed.
\end{proof}

The set $Q_{\infty}$ by definition is the set on which the periodic solutions $u_{\mu}$ have a local $L^\infty$-bound as $\mu\uparrow\mu^*(b)$. The aim is to show that $Q_b\subseteq Q_\infty$. For that purpose we construct local super-solution for \eqref{eq:pp-logistic} that are independent of $\mu\leq\mu^*(b)$. These local super-solutions on sub-cylinders of $Q_b$ constructed in a very similar way as those in \cite{du:12:pleb}. There are similar results for the stationary problem, see for instance \cite{cirstea:02:eub,du:06:ost,lopez:16:mpe}.
\begin{proposition}
  \label{prop:local-bound}
  Suppose that Assumption~\ref{assum:blow-up} is satisfied. Then the family of periodic solutions $(u_{\mu})_{\mu\in(\mu_1(0),\mu^*(b))}$ is bounded in $L^{\infty}_{loc}(Q_{b})$.
\end{proposition}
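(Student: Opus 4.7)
The plan is to dominate each $u_\mu$ on small sub-cylinders of $Q_b$ by an explicit $\mu$-independent super-solution that blows up on the initial and lateral parabolic boundary of that sub-cylinder, and then conclude by a weak parabolic comparison argument. Note first that $Q_b$ is open (the $\liminf$ in \eqref{eq:b-positive} is lower semicontinuous), and that at each $(x_0,t_0)\in Q_b$ the defining condition yields a space-time neighbourhood of $(x_0,t_0)$ and $\delta>0$ such that $b\geq\delta$ almost everywhere on that neighbourhood. By a finite-covering argument the local $L^\infty$-bound on an arbitrary compact $K\subset Q_b$ reduces to producing a $\mu$-independent bound on every closed sub-cylinder $\overline{Q''}$ compactly contained in some open cylinder $Q':=B_R(x_0)\times(s,t^*)$ with $b\geq\delta$ on $Q'$. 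On such a $Q'$, Assumption~\ref{assum:blow-up}(B2) together with $b\geq\delta$ turn the logistic equation into the scalar differential inequality
\begin{equation*}
  \partial_t u_\mu - \alpha(t)\Delta u_\mu \leq \mu^*(b)\,u_\mu - c\delta\,u_\mu^p \quad\text{in } Q',
\end{equation*}
so $u_\mu$ is a sub-solution of a pure super-linear logistic problem whose data do not depend on $\mu$.

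For the super-solution I use the separated ansatz
\begin{equation*}
  \bar u(x,t) := A(t-s)^{-1/(p-1)} + B\bigl(R^2 - |x-x_0|^2\bigr)^{-2/(p-1)},
\end{equation*}
in which the exponents are the ones dictated by the asymptotic ODE $\phi'=-c\delta\phi^p$ (for the temporal blow-up at $t=s$) and by the Keller-Osserman balance $-\Delta\psi\sim\psi^p$ (for the spatial blow-up at $\partial B_R$). Because $(a+b)^p\geq a^p+b^p$ for all $a,b\geq 0$ and $p\geq 1$, the super-solution inequality $\partial_t\bar u - \alpha(t)\Delta\bar u -\mu^*(b)\bar u + c\delta\bar u^p \geq 0$ decouples into a purely temporal and a purely spatial piece. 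A direct polynomial computation, using $\alpha(t)\leq\alpha_1$ and $\Delta\psi\geq 0$, shows that each piece is non-negative provided $A$ and $B$ are chosen sufficiently large, with thresholds depending only on $p$, $c\delta$, $\alpha_1$, $\mu^*(b)$, $R$, $t^*-s$ and $N$, and hence not on $\mu$. By construction $\bar u(x,t)\to\infty$ as $t\to s^+$ or $|x-x_0|\to R^-$.

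For the comparison, fix $\mu$ and $\eta>0$, and consider the shrunken cylinder $Q_\eta:=B_{R-\eta}(x_0)\times(s+\eta,t^*)$. Because $u_\mu\in L^\infty((0,T),L^\infty(\Omega))$ by Lemma~\ref{lem:solution-bounded}, whereas $\bar u$ diverges uniformly on $(\{s+\eta\}\times B_{R-\eta})\cup(\partial B_{R-\eta}\times[s+\eta,t^*])$ as $\eta\to 0^+$, there exists $\eta_\mu>0$ with $u_\mu\leq\bar u$ on the parabolic boundary of $Q_\eta$ for every $\eta\in(0,\eta_\mu)$. Writing $w:=(u_\mu-\bar u)^+$ and using the pointwise identity $u_\mu^p-\bar u^p = p\,\xi^{p-1}(u_\mu-\bar u)$ with some $\xi\geq 0$, the non-linear damping provides the absorption term $-c\delta p\,\xi^{p-1}w^2\leq 0$ on $\{u_\mu>\bar u\}$. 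Multiplying the difference inequality by $w$ and integrating in space, exactly as in the proof of Proposition~\ref{prop:a-priori-estimates}, yields
\begin{equation*}
  \frac{d}{dt}\int_{B_{R-\eta}} w(t)^2\,dx + 2\alpha_0\int_{B_{R-\eta}} |\nabla w(t)|^2\,dx \leq 2\mu^*(b)\int_{B_{R-\eta}} w(t)^2\,dx,
\end{equation*}
together with $w(s+\eta)\equiv 0$ and $w=0$ on $\partial B_{R-\eta}$. Gronwall then forces $w\equiv 0$, whence $u_\mu\leq\bar u$ throughout $Q_\eta$. Letting $\eta\to 0$ gives $u_\mu\leq\bar u$ on $Q'$, and the boundedness of $\bar u$ on any $\overline{Q''}$ compactly contained in $Q'$ yields the desired $\mu$-independent local bound.

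The main technical obstacle is the verification of the super-solution inequality for $\bar u$ uniformly on all of $Q'$, not merely asymptotically near the blow-up surfaces. The choice of exponents $1/(p-1)$ and $2/(p-1)$ makes the dominant terms match in the leading order near $t=s$ and near $\partial B_R$ respectively, but in the interior of $Q'$ one must also absorb the lower-order terms $-2\alpha(t)\beta BN(R^2-|x-x_0|^2)^{-\beta-1}$, $-\mu^*(b)\psi$ and $-\mu^*(b)\phi$ with $\beta=2/(p-1)$, and this is precisely what fixes the minimal thresholds on $A$ and $B$. A secondary, more routine, obstacle is the justification of the parabolic weak comparison principle in the $W(0,T;V,V')$ setting without classical regularity; this is dispatched by the energy computation above, which uses only the integration-by-parts formula already invoked in Proposition~\ref{prop:a-priori-estimates} together with the sign of the absorption coming from the super-linear damping.
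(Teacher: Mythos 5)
Your proposal is correct, and it pursues the same overall strategy as the paper's proof: localise to a sub-cylinder compactly contained in $Q_b$ where $b$ is bounded below, dominate $u_\mu$ by a $\mu$-independent super-solution that blows up on the parabolic boundary of that sub-cylinder, and conclude by a parabolic comparison argument on shrinking cylinders. In both cases the super-solution is an additive split into a purely temporal piece and a purely spatial piece, and in both cases the coupling in the absorption term is handled via the superadditivity $(a+b)^p\geq a^p+b^p$ (which the paper somewhat loosely calls Minkowski's inequality). The technical route differs in two places, however. First, for the two pieces of the super-solution, the paper works on a smooth sub-domain $U\Subset\Omega$ and takes the exact blow-up solution $z$ of the Bernoulli ODE $\dot z=\mu^*(b)z-cBz^p$ with $z(s^+)=\infty$ together with the exact solution $w$ of the elliptic boundary blow-up problem $-\Delta w=\frac{\mu^*(b)}{\alpha_0}w-\frac{cB}{\alpha_1}w^p$, $w|_{\partial U}=\infty$, citing the existence results in Du's book; you instead work on balls and write down the explicit Keller--Osserman-type profiles $A(t-s)^{-1/(p-1)}$ and $B(R^2-|x-x_0|^2)^{-2/(p-1)}$, verifying the differential inequality by a direct polynomial estimate. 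Your version is more self-contained (no blow-up existence theory is invoked) at the cost of the threshold computation for $A$ and $B$, which you correctly identify as the main labour. Second, for the comparison step the paper cites the weak parabolic maximum principle from Aronson--Serrin, whereas you rederive it through an energy estimate on $(u_\mu-\bar u)^+$ combined with Gr\"onwall, using only the integration-by-parts identity already employed in Proposition~\ref{prop:a-priori-estimates}; this keeps the argument entirely within the variational framework of the paper. Both routes are sound and arrive at the same $\mu$-independent bound $u_\mu\leq\bar u$ on $Q'$.
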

\begin{proof}
  The proof relies on the comparison of solutions $u_{\mu}$ with a super-solution to a boundary blow-up problem on strongly included sub-cylinders of $Q_{b}$. Let $U\Subset\Omega$, with $C^{2}$ boundary, and $(s,t)\subseteq[0,T]$ such that $\overline{U}\times[s,t]\subseteq Q_{b}$.  Let $c>0$ and $p>2$ be as in the assumptions.  By definition of $Q_b$ there exists $B>0$ such that $b(x,\tau)\geq B$ for all $(x,t)\in\overline U\times [s,t]$. For every $\mu\in(\mu_{1}(0),\mu^{*}(b))$ we have that $u_{\mu}$ is a sub-solution of
  \begin{equation}
    \label{eq:blow-up}
    \begin{aligned}
      \partial_{t}u - \alpha(t)\Delta u & = \mu^{*}(b)u - Bcu &  & \text{in }U\times(s,T^{*}], \\ u(x,t) & = u_{\mu}(x,t) &  & \text{on }\partial U\times(s,T^{*}],\\ u(x,s) & = u_{\mu}(x,s) &  & \text{in }U.
    \end{aligned}
  \end{equation}
  In order to construct a super-solution of \eqref{eq:blow-up} use the solutions of two problems. It is easily verified that the Bernoulli type differential equation
  \begin{equation*}
    \begin{aligned}
      \dot{z}               & = \mu^{*}(b) z - cBz^{p} &  & t>s, \\
      \lim_{t\to s^{+}}z(t) & = \infty,
    \end{aligned}
  \end{equation*}
  has a unique strictly positive solution. Moreover, it is know that the elliptic boundary blow-up problem
  \begin{equation*}
    \begin{aligned}
      -\Delta w & = \frac{\mu^{*}(b)}{\alpha_0} w - \frac{cB}{\alpha_1} w^{p} &  & \text{in }U,          \\
      w         & = \infty                                                    &  & \text{on }\partial U,
    \end{aligned}
  \end{equation*}
  also has a unique strictly positive continuous solution, see for instance \cite[Theorem~6.14 \& 6.18]{du:06:ost}. Setting $v(x,t):=w(x)+z(t)$ we then have
  \begin{align*}
    \partial_{t}v - \alpha(t)\Delta v & = \dot{z} - \alpha(t)\Delta w                                                                                 \\
                                      & = \mu^{*}(b)\left(\frac{\alpha(t)}{\alpha_0}w+z\right) - cB\left(\frac{\alpha(t)}{\alpha_1}w^{p}+z^{p}\right) \\
                                      & \geq \mu^{*}(b)(w+z) - cB(w^{p}+z^{p})                                                                        \\
                                      & \geq \mu^{*}(b)(w+z) - cB(w^{p}+z^{p})
    \geq \mu^{*}(b)v - cBv^{p}
  \end{align*}
  in $U\times(s,T^{*}]$, where the last inequality follows from Minkowski's inequality. Hence, $v$ is a super-solution of \eqref{eq:blow-up}. It follows from the weak maximum principle, see for instance \cite[Theorem~1]{aronson:67:lbs} that
  \begin{equation*}
    v(x,t) > u_{\mu}(x,t) \quad \text{for }(x,t)\in U\times(s,t]
  \end{equation*}
  for all $\mu>\mu^*(b)$, showing that the family of periodic solutions is locally bounded.
\end{proof}
The key to Proposition~\ref{prop:local-bound} is the existence of a blow-up solution on a strongly included sub-cylinder whose parabolic boundary lies in $Q_{b}$. One can then ask what occurs when $Q_{0}$ has regions which can be taken as part of the interior of a strongly included sub-cylinder whose parabolic boundary lies $Q_{b}$. In Figure \ref{fig:mu-finite} we have such an example given by the dotted region enclosed by the sub-cylinder $U\times(t_{0},t_{1}]$. After the transformation $v_{\mu}:=e^{-\mu t}u_{\mu}$ we have by Proposition~\ref{prop:local-bound} that there exists $M>0$ such that $v_{\mu} \leq Me^{|\mu^{*}(b)|T}$ in a neighbourhood of $\mathcal{P}(U\times(t_{0},t_{1}])$ in $Q_{b}$ and so, by the weak parabolic maximum principle,
\begin{equation*}
  \sup_{\overline{U}\times[t_{0},t_{1}]}v_{\mu} \leq Me^{|\mu^{*}(b)|T} \implies \sup_{\overline{U}\times[t_{0},t_{1}]}u_{\mu} \leq Me^{2|\mu^{*}(b)|T}.
\end{equation*}

In a more general scenario, where the set $Q_{0}$ and $Q_{b}$ may have a geometry that does not work so easily with a single sub-cylinder as in Figure \ref{fig:mu-finite}. We can take a finite collection of overlapping sub-cylinders with parabolic boundary in $Q_b$ or a cylinder below, for example see Figure \ref{fig:alcove-cover} for an illustration. It then follows that $Q_{b}\subseteq Q_{\infty}$ but depending on the geometry of $Q_{0}$ it is possible that $Q_{\infty}\cap Q_{0}\neq\emptyset$.

\begin{figure}[ht]
  \centering
  \begin{tikzpicture}[scale=1]
    \draw (-2.5,-2) node[left] {$0$} -- (1.5,-2) node[midway,below] {$\Omega$} -- (1.5,1.5) -- (-2.5,1.5) node[left] {$T$} -- cycle;%
    \draw[fill=lightgray] (-2,1) -- (-1.5,1) -- (-0.5,-0.5) -- (0.5,1) -- (1,1) -- (-0.5,-1.6) -- cycle;%
    \draw[thick,red,pattern=dots] (-1.6,1) |- (0.6,0.4) -- (0.6,1);%
    \draw[thick,red,pattern=dots] (-1.2,0.5) |- (0.2,-0.2) -- (0.2,0.5);%
    \draw[thick,red,pattern=dots] (-0.9,-0.1) |- (-0.1,-0.7) -- (-0.1,-0.1);%
  \end{tikzpicture}
  \caption{$Q_b$ (shaded) with overlapping sub-cylinders covering part of $Q_0$}
  \label{fig:alcove-cover}
\end{figure}
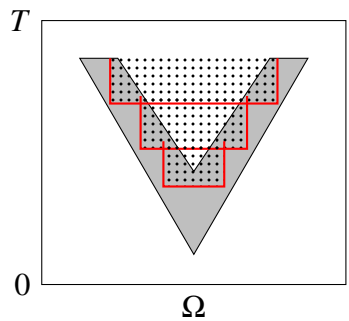

This provides a clear distinction between the behaviour of positive solutions of \eqref{eq:pp-logistic} and those of the corresponding elliptic logistic equation. As illustrated by the examples in Figure~\ref{fig:mu-finite} and Figure~\ref{fig:alcove-cover}, solutions may blow up only on part of a connected component of $Q_0$ in the periodic-parabolic case, whereas if there is blowup in for the solutions of the corresponding solution of the stationary equation, it is on the whole connected component of that zero set.

\end{document}